\definecolor{marin}{rgb}{0.,0.3,0.7}
\let\@fnsymbol\@arabic
\newcommand{\al}{\alpha}
\newcommand{\be}{\beta}
\newcommand{\ga}{\gamma}
\newcommand{\de}{\delta}
\newcommand{\ep}{\varepsilon}
\newcommand{\et}{\eta}
\newcommand{\la}{\lambda}
\newcommand{\rh}{\rho}
\newcommand{\si}{\sigma}
\newcommand{\ta}{\tau}
\newcommand{\om}{\omega}
\newcommand{\omt}{\varpi}
\newcommand{\Ga}{\Gamma}
\newcommand{\Ph}{\Phi}
\newcommand{\Om}{\Omega}
\newcommand{\N}{\mathbb{N}}
\newcommand{\Z}{\mathbb{Z}}
\newcommand{\R}{\mathbb{R}}
\newcommand{\C}{\mathbb{C}}
\newcommand{\T}{\mathbb{T}}
\newcommand{\Fc}{\mathcal{F}}
\newcommand{\Ic}{\mathcal{I}}
\newcommand{\Nc}{\mathcal{N}}
\newcommand{\Pc}{\mathcal{P}}
\newcommand{\Qc}{\mathcal{Q}}
\newcommand{\abf}{\mathbf{a}}
\newcommand{\bbf}{\mathbf{b}}
\newcommand{\dbf}{\mathbf{d}}
\newcommand{\ebf}{\mathbf{e}}
\newcommand{\kbf}{\mathbf{k}}
\newcommand{\lbf}{\mathbf{l}}
\newcommand{\vbf}{\mathbf{v}}
\newcommand{\zbf}{\mathbf{z}}
\newcommand{\Fbf}{\mathbf{F}}
\newcommand{\Hbf}{\mathbf{H}}
\newcommand{\Pbf}{\mathbf{P}}
\newcommand{\Sbf}{\mathbf{S}}
\newcommand{\Ombf}{\boldsymbol{\Om}}
\newcommand{\Gabf}{\boldsymbol{\Ga}}
\newcommand{\omtbf}{\boldsymbol{\omt}}
\providecommand{\abs}[1]{\lvert#1\rvert}
\providecommand{\absbig}[1]{\bigl\lvert#1\bigr\rvert}
\providecommand{\absBig}[1]{\Bigl\lvert#1\Bigr\rvert}
\providecommand{\absbigg}[1]{\biggl\lvert#1\biggr\rvert}
\providecommand{\norm}[1]{\lVert#1\rVert}
\providecommand{\normbig}[1]{\bigl\lVert#1\bigr\rVert}
\providecommand{\normBig}[1]{\Bigl\lVert#1\Bigr\rVert}
\providecommand{\normv}[1]{\ensuremath{{\lVert\hskip-1pt\lvert}#1{\rvert\hskip-1pt\rVert}}}
\providecommand{\normvBig}[1]{\ensuremath{{\Bigl\lVert\hskip-1pt\Bigl\lvert}#1{\Bigr\rvert\hskip-1pt\Bigr\rVert}}}
\providecommand{\skla}[1]{\langle#1\rangle}
\newcommand{\formulatext}[1]{\qquad\text{#1}\qquad}
\newcommand\myfor{\formulatext{for}}
\renewcommand\forall{\formulatext{for all}}
\newcommand\myand{\formulatext{and}}
\newcommand\with{\formulatext{with}}
\newcommand{\sfrac}[2]{\mbox{$\textstyle\frac{#1}{#2}$}}
\newcommand{\iu}{\mathrm{i}}
\newcommand{\e}{\mathrm{e}}
\newcommand{\dd}{\mathrm{d}}
\DeclareMathOperator{\ReT}{Re}
\DeclareMathOperator{\ImT}{Im}
\DeclareMathOperator{\diag}{diag}
\DeclareMathOperator{\sgn}{sgn}
\DeclareMathOperator{\argmax}{argmax}
\newtheorem{theorem}{Theorem}[section]
\newtheorem{lemma}[theorem]{Lemma}
\newtheorem{proposition}[theorem]{Proposition}
\theoremstyle{definition}
\newtheorem{assum}{Assumption}
\newtheorem{remark}[theorem]{Remark}
\newcommand{\jvec}{{\skla{j}}}
\newcommand{\disc}{\mathcal{K}}
\newcommand{\ind}{\mathcal{Z}}
\newcommand{\res}{\mathcal{M}}
\newcommand{\Sch}{Schr\"o\-ding\-er }
\title{Plane wave stability of the split-step Fourier method for the nonlinear Schr\"odinger equation}
\author{Erwan Faou\,\thanks{INRIA and ENS Cachan Bretagne, 
          Avenue Robert Schumann, 
          F-35170 Bruz, France
          ({\tt Erwan.Faou@inria.fr}).}
         \textsuperscript{,}\thanks{D\'{e}partement de math\'{e}matiques et applications, 
          \'{E}cole normale sup\'{e}rieure,
          45 rue d'Ulm,
          F-75230 Paris Cedex 05, France.}
        \and
        Ludwig Gauckler\,\thanks{Institut f\"ur Mathematik,
          Technische Universit\"at Berlin,
          Stra{\ss}e des 17.\ Juni 136,
          D-10623 Berlin, Germany
          ({\tt gauckler@math.tu-berlin.de}).}
        \and
        Christian Lubich\,\thanks{Mathematisches Institut,
          Universit\"at T\"ubingen,
          Auf der Morgenstelle 10,
          D-72076 T\"ubingen, Germany
          ({\tt lubich@na.uni-tuebingen.de}).}
}
\date{Version of 2 Dezember 2013}
\begin{document}

\maketitle

\begin{abstract}
Plane wave solutions to the cubic nonlinear Schr\"odinger equation on a torus have recently been shown to behave orbitally stable. Under generic perturbations of the initial data that are small in a high-order Sobolev norm, plane waves are stable over long times that extend to arbitrary negative powers of the smallness parameter. The present paper studies the  question as to whether numerical discretizations by the split-step Fourier method inherit such a generic long-time stability property. This can indeed be shown under a condition of linear stability and a non-resonance condition. They can both be verified if the time step-size is restricted by a CFL condition in the case of a constant plane wave. The proof first uses a Hamiltonian reduction and transformation and then modulated Fourier expansions in time. It provides detailed insight into the structure of the numerical solution.\\[1.5ex]
\textbf{Mathematics Subject Classification (2010):} Primary 65P10, 
65P40; 
secondary: 65M70.
\end{abstract}

\section{Introduction}

We consider the \emph{cubic nonlinear Schr\"odinger} equation
\begin{equation}\label{eq-nls}
\iu \frac{\partial}{\partial t} u = -\Delta u + \lambda\abs{u}^2u, \qquad u=u(x,t)
\end{equation}
in the defocusing ($\la=+1$) or focusing case ($\la=-1$). 
We impose periodic boundary conditions in arbitrary spatial dimension $d\ge 1$: the spatial variable $x$ belongs to the $d$-dimensional torus $\T^d=\R^d/(2\pi\Z)^d$. 

This nonlinear \Sch equation has a class of simple solutions, 
the \emph{plane wave solutions}
\begin{equation}\label{eq-pw}
u(x,t)=\rho\, \e^{\iu(\ell\cdot x - \om t)} 
\end{equation}
for $\rho\ge 0$, $\ell\in\Z^d$ and $\om= \abs{\ell}^2 + \la\rho^2$, where $\ell\cdot x=\ell_1x_1+\dots+\ell_dx_d$ and $\abs{\ell}^2=\ell\cdot \ell$. A natural question is whether these plane wave solutions~\eqref{eq-pw} are stable under small perturbations of the initial value. In this context it is common knowledge that a \emph{linear stability analysis}, where one examines the eigenvalues of the linearization of the nonlinear \Sch equation~\eqref{eq-nls} around a plane wave, leads to the condition $1+2\la\rh^2\ge 0$ for (linear) stability, see for instance \cite[Sect.~5.1.1]{Agrawal2006}. Since nonlinear effects are ignored, the validity of such a linear stability analysis is inherently restricted to a short time interval. Stability and instability on long time intervals of plane waves in the exact solution is discussed in the recent papers \cite{Faoua} and \cite{Hani}, respectively. Of particular importance for the present paper is \cite{Faoua}, where orbital stability over long times is shown for perturbations in high-order Sobolev spaces. Orbital stability means that the solution stays close to the orbit \eqref{eq-pw}.

From the viewpoint of numerical analysis, it is  of interest  whether (and if so why) a numerical method shares the stability or instability of the exact solution near plane waves. This is the topic of the present paper. 

This problem can be traced back to the seminal paper \cite{Weideman1986} by Weideman \& Herbst from 1986. In that paper, conditions on the discretization parameters for various numerical methods are derived that ensure that the numerical solution shares the \emph{linear stability} of the exact solution. This is done by examining the eigenvalues of the linearization around a plane wave of a numerical method applied to~\eqref{eq-nls}. Such a linear stability analysis has recently been extended to different numerical methods \cite{Cano2013,Dahlby2009,Khanamiryan,Lakoba2013}.

In the present paper, we take up this line of research. In contrast to previous work \cite{Cano2013,Dahlby2009,Khanamiryan,Lakoba2013,Weideman1986}, however, we are interested in the \emph{long-time behaviour} of a numerical method near plane waves, and hence a linear stability analysis is of limited use. We pursue the question as to whether the remarkably stable behaviour on long time intervals of the exact solution near plane waves~\cite{Faoua} is shared by one of the most popular numerical methods for the nonlinear \Sch equation, the \emph{split-step Fourier method}~\cite{Hardin1973}. This method combines a Fourier collocation in space with a Strang splitting in time, see Subsect.~\ref{subsec-method}. It integrates plane wave solutions~\eqref{eq-pw} exactly. Our main result states that the long-time orbital stability of the exact solution near plane waves transfers to the numerical solution, see Subsect.~\ref{subsec-mainresult} for a precise statement. In the case of a spatially constant plane wave ($\ell=0$ in \eqref{eq-pw}), the case considered by Weideman \& Herbst \cite{Weideman1986}, it is further shown that the assumptions of this main result essentially hold under a CFL condition on the discretization parameters, see Subsect.~\ref{subsec-discassum}.

The long-time stability result of the present paper deals with the \emph{completely resonant} equation~\eqref{eq-nls}: the eigenvalues (frequencies) of the linear part of the equation are $\abs{j}^2$, $j\in\Z^d$, whose integer linear combinations may vanish identically. This is in marked contrast to previous long-time stability results for numerical discretizations of nonlinear Hamiltonian partial differential equations that consider non-resonant situations. See \cite{Faou2009a,Faou2009b,GaucklerDiss,Gauckler2010b} for the split-step Fourier method applied to the nonlinear \Sch equation, where \eqref{eq-nls} is considered with an additional (generic but artificial) convolution term $V\star u$ in order to have non-resonant frequencies. Another feature of the result in the present paper is that it covers a much larger class of \emph{initial values that are not small} than the aforementioned previous stability results that all deal with small initial values.
 
The proof of our stability result is given in Sects.~\ref{sec-trf}--\ref{sec-nonres}. We first eliminate, in Sect.~\ref{sec-trf}, the principal Fourier mode from the numerical scheme with a sequence of transformations and reductions. The resulting system of equations has small initial values. This enables us to use the technique of modulated Fourier expansions for its long-time analysis, see Sect.~\ref{sec-mfe}. It is likely that normal form techniques in the spirit of \cite{Faou2009a,Faou2009b} would lead to similar conclusions, but we have not worked out the details. In order to obtain results that are valid on long time intervals, the frequencies have to satisfy a certain non-resonance condition. In fact, the completely resonant frequencies of the nonlinear \Sch equation are modified during the transformations of Sect.~\ref{sec-trf}, and we are able to verify a non-resonance condition for the new frequencies in the final Sect.~\ref{sec-nonres}.

\section{Numerical method and statement of the main results}

\subsection{The split-step Fourier method}\label{subsec-method}

We discretize the nonlinear \Sch equation \eqref{eq-nls} with the split-step Fourier meth\-od as introduced in \cite{Hardin1973,Taha1984,Weideman1986}. In this method, the equation is discretized in space by a spectral collocation method and in time by a splitting integrator. 

\medskip\noindent{\bf Discretization in space.} 
For the discretization in space we make the ansatz
\[
u_K(x,t) = \sum_{j\in\disc} u_j(t) \e^{\iu (j\cdot x)} \with \disc := \{-K,\dots,K-1\}^d
\]
with the spatial discretization parameter $K$. For fixed $t$, $u_K(\cdot,t)$ is a \emph{trigonometric polynomial} which is uniquely determined by its values in the collocation points $x_j = \pi j/K$, $j\in\disc$. Requiring that the ansatz $u_K$ fulfills the nonlinear \Sch equation \eqref{eq-nls} in the collocation points leads to the equation
\begin{equation}\label{eq-nlssemi}
\iu \frac{\partial}{\partial t} u_K = -\Delta u_K + \lambda\Qc\bigl(\abs{u_K}^2u_K\bigr), \qquad u_K(\cdot,0) = \Qc(u(\cdot,0)),
\end{equation}
where the trigonometric interpolation $\Qc(u)$ (with respect to the spatial variable $x$) of a function $u(x) = \sum_{k\in\Z^d}u_k \e^{\iu (k\cdot x)}$  is the uniquely determined trigonometric polynomial that interpolates $u$ in the collocation points. This trigonometric interpolation is given by
\[
\Qc(u) = \sum_{j\in\disc} \widetilde{u}_j \e^{\iu (j\cdot x)} \with \widetilde{u}_j = \sum_{k\in\Z^d : k\equiv j \bmod{2K}} u_k,
\]
where the congruence modulo $2K$ has to be understood entrywise. 

\medskip\noindent{\bf Discretization in time.} 
Equation \eqref{eq-nlssemi} is then discretized in time by a splitting integrator with time step-size $h$. For this purpose we split \eqref{eq-nlssemi} in its linear and its nonlinear part,
\[
\iu \frac{\partial}{\partial t} u_K = -\Delta u_K \myand  \iu \frac{\partial}{\partial t} u_K = \lambda\Qc\bigl(\abs{u_K}^2u_K\bigr).
\]
Denoting by $\Ph_{\text{linear}}^h$ and $\Ph_{\text{nonlinear}}^h$ the flows over a time $h$ of these equations, we compute approximations $u_K^{n}$ to $u_K(\cdot,t_{n})$ at discrete times $t_{n}=nh$ by 
\begin{subequations}\label{eq-splitting}
\begin{equation}\label{eq-splitstep}
u_K^{n+1} = \Ph_{\text{linear}}^h \circ \Ph_{\text{nonlinear}}^h (u_K^n).
\end{equation}
The initial value $u_K^0$ is chosen as
\begin{equation}\label{eq-splitinit}
u_K^0 = u_K(\cdot,0) = \Qc(u(\cdot,0)).
\end{equation}
\end{subequations}
Equations \eqref{eq-splitting} provide a fully discrete scheme for the numerical solution of the nonlinear \Sch equation \eqref{eq-nls}, the \emph{split-step Fourier method}. Since its introduction in \cite{Hardin1973} it has become a widely used and well analysed method, see for example \cite{Agrawal2006,Faou2012,GaucklerDiss,Jin2011,Taha1984,Weideman1986} and references therein.

\medskip\noindent{\bf Computational aspects.} 
In~\eqref{eq-splitting}, both flows $\Ph_{\text{linear}}^h$ and $\Ph_{\text{nonlinear}}^h$ can be computed exactly in an efficient way. The flow of the linear equation is given in terms of the Fourier coefficients $u_j$ of a trigonometric polynomial $u(x) = \sum_{j\in\disc}u_j\e^{\iu(j\cdot x)}$ by
\begin{equation}\label{eq-flow-linear}
\Ph_{\text{linear}}^h(u) = \sum_{j\in\disc}\e^{-\iu\abs{j}^2h} u_j\e^{\iu(j\cdot x)}.
\end{equation}
Thus, it can be computed easily in terms of these Fourier coefficients. On the other hand, the flow of the nonlinear equation is given by
\begin{equation}\label{eq-flow-nonlinear}
\Ph_{\text{nonlinear}}^h(u) = \Qc\Bigl( \e^{-\iu\la\abs{u}^2h} u\Bigr),
\end{equation}
i.e., $\Ph_{\text{nonlinear}}^h(u) (x_j) = \e^{-\iu\la\abs{u(x_j)}^2h} u(x_j)$ for all $j\in\disc$. This is easy to compute in terms of the function values in the collocation points. Note that the fast Fourier transform provides an efficient tool to switch from Fourier coefficients to function values in the collocation points and vice-versa. The computational cost per time step is thus of order  $K^d\log K^d$.

\medskip\noindent{\bf Plane waves in the split-step Fourier method.}  
The split-step Fourier method \eqref{eq-splitting} has \emph{plane wave solutions}
\begin{equation}\label{eq-pwnum}
 u^n_K(x) = \rho \e^{\iu(\ell\cdot x - \om t_n)} \myfor u_K^0(x)= \rho \e^{\iu (\ell\cdot x)}  
\end{equation}
with $\om= \abs{\ell}^2 + \la\rho^2$ and $\ell\in\disc$. 
In other words, the plane wave solutions $\rho \e^{\iu(\ell\cdot x - \om t)}$ \eqref{eq-pw} of the nonlinear \Sch equation \eqref{eq-nls} are integrated exactly by the split-step Fourier method if $\ell\in\disc$. It is the stability of these plane wave solutions \eqref{eq-pwnum} under perturbations of the initial value that we are interested in.

\subsection{Long-time orbital stability}\label{subsec-mainresult}

For the study of the stability of plane wave solutions \eqref{eq-pwnum}, with fixed vector $\ell\in\disc$, we impose the following assumptions (with constants that do not depend on the discretization parameters $h$ and $K$). 

\begin{assum}\label{assum-linearstability}
We assume that the time step-size $h$ and the spatial discretization parameter $K$ fulfill together with $\rh\ge 0$ (which will be chosen later as the $L_2$-norm of the initial value)
\begin{equation}\label{eq-linearstability}
\bigl( \cos(n(j) h) - h\la\rh^2\sin(n(j) h) \bigr)^2 \le 1 - c_1 h^2 \forall j\in\ind := \disc\setminus\{0\}
\end{equation}
with a positive constant $c_1$, where
\begin{equation}\label{eq-nj}
n(j) = \sfrac12 \abs{\ell+j\bmod{2K}}^2 + \sfrac12 \abs{\ell-j\bmod{2K}}^2 - \abs{\ell}^2.
\end{equation}
\end{assum}

Assumption~\ref{assum-linearstability} ensures that the \emph{frequencies}
\begin{equation}\label{eq-freq}
\om_j = \sfrac12 \abs{\ell+j\bmod{2K}}^2 - \sfrac12 \abs{\ell-j\bmod{2K}}^2 + \frac{\arccos\bigl(\cos(n(j) h) - h\la\rh^2\sin(n(j) h)\bigr)}{h \sgn\bigl(\sin(n(j) h) +h\la\rh^2\cos(n(j) h)\bigr)}
\end{equation}
are well defined for all $j\in\ind$.
These frequencies show up after a linearization of the split-step Fourier method around a plane wave.  This linearization has eigenvalues $\e^{-\iu\omega_j h}$, see Sect.~\ref{sec-trf}. 

As a second assumption we need a \emph{non-resonance condition}. Ideally we would like to impose this condition directly on the frequencies $\om_j$. For the verification of the non-resonance condition, however, it turns out to be appropriate to consider modifications of these frequencies.

\begin{assum}\label{assum-nonres}
We assume that the time step-size $h$, the spatial discretization parameter $K$ and $\rh\ge 0$ are chosen such that there exist  \emph{modified frequencies} $\omt_j$, $j\in\ind$, with the following properties for some $N\ge 2$:

(a) The modified frequencies are close to the frequencies $\om_j$ \eqref{eq-freq}, 
\[
\abs{\omt_j - \om_j} \le \widehat{\ep} \forall j\in\ind
\]
with a small parameter $\widehat{\ep}$.

(b) There exist positive constants $c_2$, $\de_2$ and $s_2$ such that the following holds for all vectors $(k_j)_{j\in\ind}\in\Z^\ind$ of integers with $0<\sum_{j\in\ind} \abs{k_j} \le N+1$ and with $k_j\ne 0$ only if $k_l=0$ for all indices $l\ne j$ with $\omt_l = \omt_j$: 
if
\[
\de:= \absbigg{\frac{\e^{\iu(\sum_{j\in\ind} k_j\omt_j )h}-1}{h}} \le \de_2,
\]
then for all $l\in\ind$ satisfying $k_l\ne 0 $,
\[
\frac{\abs{l}^4}{\prod_{j\in\ind} \abs{j}^{2\abs{k_j}}} \le c_2 \de^{N/s_2}.
\]

(c) Complete resonances among the modified frequencies, i.e., $h \sum_{j\in\ind} k_j\omt_j \in 2\pi\Z$ for a vector $(k_j)_{j\in\ind}$ of integers with $\sum_{j\in\ind} \abs{k_j}\le N+1$, can only occur if 
\[
\sum_{\substack{j\in\ind\\ n(j) = m}} k_j =0 \forall m\in\Z.
\]
\end{assum}

Under these assumptions we will prove the following main result. Here we denote, for a trigonometric polynomial $u(x)=\sum_{j\in\disc}u_j\e^{\iu (j\cdot x)}$, by 
\[
\norm{u}_s^2 = \abs{u_0}^2 + \sum_{j\in\disc} \abs{j}^{2s} \abs{u_j}^2
\]
its Sobolev $H^s$-norm. We further denote by 
\[
\Fc_{\neg \ell}(u) = \Qc\bigl(\e^{-\iu (\ell\cdot x)} u - u_\ell\bigr) = \sum_{0\ne j\in\disc} u_{j+\ell \bmod{2K}} \e^{\iu (j\cdot x)}
\]
the same function with the $\ell$th Fourier coefficient set to zero, followed by a shift of Fourier coefficients by $\ell$ and a trigonometric interpolation.  Note that 
$\norm{ \Fc_{\neg \ell}(u) }_s$ measures the size of those Fourier coefficients whose subscript differs from $\ell$ modulo~$2K$.

\begin{theorem}\label{thm-main}
Fix an index $\ell\in\disc$, an integer $N\ge 2$ and positive numbers $c_1$,  $c_2$,  $s_2$, $\de_2$ and $\rh_1$. There exist $s_0$ and $C$ such that for every $s\ge s_0$ there exists $\ep_0>0$ such that the following holds: 
If the time step-size $h$, the spatial discretization parameter $K$ and $\rh\le \rh_1$  fulfill Assumptions~\ref{assum-linearstability} and \ref{assum-nonres} with some $\widehat{\ep}\le\ep_0$ (and with the prescribed constants $c_1$,  $c_2$,  $s_2$, $\de_2$), then for every initial value $u_K^0$ with
\[
\normbig{u_K^0}_0 = \rh \myand \normbig{ \Fc_{\neg \ell}(u_K^0) }_s \le \ep \le \ep_0
\]
we have the long-time stability estimate
\[
\normbig{\Fc_{\neg \ell}(u_K^n)}_s \le C \ep \myfor 0\le t_n = nh \le \max(\ep,\widehat{\ep})^{-N/2}.
\]
\end{theorem}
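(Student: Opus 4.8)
The plan is to follow the two-stage strategy announced in the introduction: first a Hamiltonian reduction that eliminates the principal Fourier mode $u_\ell$ and turns the problem into a perturbation of an equilibrium with small data, and then a modulated Fourier expansion in time to control that reduced system over the long time interval. In the first stage I would write the numerical solution in the rotating frame adapted to the plane wave, setting $u_K^n(x) = \e^{\iu(\ell\cdot x - \om t_n)}\bigl(\rho + v^n(x)\bigr)$ or, better, pass to the shifted Fourier coefficients $w_j^n$ appearing in $\Fc_{\neg\ell}$ together with a scalar action-type variable tracking $|u_\ell^n|$. Because $\norm{u_K^0}_0 = \rho$ is exactly the $L_2$-norm and the split-step method conserves (a discrete version of) mass, one can solve for the principal-mode amplitude in terms of the remaining modes and the conserved quantity, reducing the dynamics to the coefficients $(w_j)_{j\in\ind}$ whose initial size is $\ep$. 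The linearization of this reduced map around $w=0$ has eigenvalues $\e^{-\iu\omega_j h}$ with $\omega_j$ as in \eqref{eq-freq}; Assumption~\ref{assum-linearstability} is exactly what makes these eigenvalues lie on the unit circle (linear stability, no exponential growth), and it is what makes the $\arccos$ and the $\sgn$ in \eqref{eq-freq} well defined. After a further normalizing/symplectic change of variables one arrives at a Hamiltonian recursion $w^{n+1} = \e^{-\iu h\Omega} w^n + O(\norm{w^n}^3)$ with frequency vector $\Omega = (\omega_j)$ and a nonlinearity that is smooth and gauge-invariant.

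In the second stage I would construct a modulated Fourier expansion for this recursion: an ansatz of the form $w_j^n \approx \sum_{k\in\Z^\ind, \,|k|\le N} z_j^{k}(t_n)\,\e^{\iu (k\cdot\Omega) t_n}$, where the modulation functions $z^k$ vary slowly on the scale $t\sim\ep^{-1}$ and are determined by inserting the ansatz into the recursion, Taylor-expanding the nonlinearity, and comparing coefficients of each oscillation $\e^{\iu(k\cdot\Omega)t_n}$. This produces a hierarchy of "modulation equations" for the $z^k$; the coefficient in front of $z^k$ is precisely $\e^{\iu(k\cdot\Omega)h}-1$, which is why the quantity $\delta$ in Assumption~\ref{assum-nonres}(b) governs whether a given mode can be solved for algebraically (non-resonant modes) or must be kept as part of a slowly varying "near-resonant" block. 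The polynomial small-divisor estimate in (b), relating $\delta^{N/s_2}$ to $|l|^4/\prod|j|^{2|k_j|}$, is the quantitative device that lets one divide by $\e^{\iu(k\cdot\Omega)h}-1$ while losing only finitely many Sobolev derivatives — this is why $s$ must be taken large depending on $N$, $s_2$. One then shows that the modulation system possesses an almost-invariant energy (and, using gauge invariance, an almost-invariant "action") that is $O(\ep^2)$ initially and drifts by at most $O(\ep^{N+1})$ per step times the number of steps; together with the complete-resonance structure in Assumption~\ref{assum-nonres}(c) — which guarantees that the only exact resonances are the "trivial" gauge ones, so that no genuine energy can flow into the growing directions — this yields a bound $\norm{w^n}_s \le C\ep$ on the stated time scale $t_n\le\max(\ep,\widehat\ep)^{-N/2}$. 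The role of the modified frequencies $\omt_j$ from Assumption~\ref{assum-nonres}(a) is that the construction and the non-resonance estimates are carried out with the $\omt_j$, and the error $\widehat\ep$ between $\omt_j$ and $\omega_j$ is absorbed as an additional $O(\widehat\ep)$ perturbation of the recursion, which is why $\widehat\ep$ enters the final time scale on the same footing as $\ep$.

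The main obstacle, as usual for completely resonant equations, is the handling of resonances: the frequencies $\omega_j$ of the nonlinear \Sch equation are $\sim|j|^2$, whose integer combinations vanish identically on large sublattices, so one cannot hope to invert $\e^{\iu(k\cdot\Omega)h}-1$ for all $k$. The resolution — and the technical heart of the argument — is that the transformations in Stage~1 \emph{modify} these frequencies (note the $\frac12|\ell+j|^2 - \frac12|\ell-j|^2$ and $\arccos$ corrections in \eqref{eq-freq}), and one must verify that the modified (or further-modified $\omt_j$) frequencies satisfy the Diophantine-type condition in Assumption~\ref{assum-nonres}(b); this is exactly the content deferred to Section~\ref{sec-nonres}. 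Granting Assumption~\ref{assum-nonres}, the remaining subtlety is bookkeeping: one must ensure that the modulated Fourier expansion, the almost-invariants, and the error estimates are all uniform in the discretization parameters $h$ and $K$ — in particular the constants $s_0$, $C$, $\ep_0$ must not degrade as $K\to\infty$ or $h\to 0$ — which forces all estimates to be phrased in terms of the step-size-independent constants $c_1, c_2, s_2, \delta_2$ and to exploit the boundedness $\rho\le\rho_1$. Finally one transfers the bound on $\norm{w^n}_s$ back through the (norm-equivalent, uniformly bounded) changes of variables of Stage~1 to recover $\norm{\Fc_{\neg\ell}(u_K^n)}_s \le C\ep$.
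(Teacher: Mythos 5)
Your two-stage strategy (reduction/transformation to a small-data system, then modulated Fourier expansion plus almost-invariants) is exactly the paper's, and the ingredients you identify — mass conservation to eliminate the principal mode, Assumption~\ref{assum-linearstability} for linear stability and well-posedness of $\arccos$ in \eqref{eq-freq}, the small-divisor estimate in Assumption~\ref{assum-nonres}(b) for solving the modulation hierarchy, the norm equivalence of the transformation — are all correct. However, two technical pivots that your sketch glosses over are precisely where the genuine difficulties lie, and as written the proposal would not go through.

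First, the ansatz $w_j^n\approx\sum_{|k|\le N}z_j^k(t_n)\e^{\iu(k\cdot\Omega)t_n}$ is ill-posed in this setting, because the modified frequencies $\omt_j$ of Assumption~\ref{assum-nonres} are \emph{by design exactly resonant} (for instance $\omt_j=\omt_l$ whenever $|j|=|l|$ in the case $\ell=0$): infinitely many multi-indices $\kbf$ produce the same oscillation $\e^{-\iu(\kbf\cdot\omtbf)t}$. The paper quotients by the resonance module $\res=\{\kbf: \kbf\cdot\omtbf=0,\ j(\kbf)=0\}$ and sums over residue classes $[\kbf]\in\Z^\ind/\res$; Assumption~\ref{assum-nonres}(c) is then exactly what makes the super-actions $\Ic_m$ of \eqref{eq-Ic} well-defined on these classes. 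You never introduce this quotient, and without it the comparison of coefficients that defines the modulation equations is not meaningful. Second, your statement that ``the modulation system possesses an almost-invariant energy (and an almost-invariant action)'' is an assertion, not an argument, and it is the hardest part of the proof. In this completely resonant situation the conserved quantities are not individual actions but the super-actions collecting all modes with the same $n(j)$, and the paper extracts their near-conservation from Noether's theorem applied to the \emph{Hamiltonian splitting structure of the modulation system} (Subsects.~\ref{subsec-splitstruc-exact} and \ref{subsec-splitstruc-mfe}, Lemma~\ref{lemma-invariant}). The key and non-obvious observation there — described in the paper as ``highly remarkable'' — is that after eliminating $(a,\theta)$ via mass conservation and gauge invariance, the reduced scheme $w^{n+1}=\Ph^h_{\widetilde H_0}\circ\Ph^h_{\widetilde P}(w^n)$ is \emph{still} a composition of two Hamiltonian flows in the reduced variables, which is what permits the Noether argument; this structural fact is absent from your sketch. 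As smaller points: the reduced nonlinearity in \eqref{eq-split-z}--\eqref{eq-split-xi} is $O(\norm{w}^2)$, not $O(\norm{w}^3)$; and the modulation functions vary on the slow time $\tau=\de t$ with $\de=\max(\ep,\widehat\ep)^{1/2}$, so that a single expansion covers only $t\le\de^{-1}$ and the long interval $t\le\de^{-N}$ is reached by patching together $\de^{1-N}$ such short intervals while tracking the drift of the $\Ic_m$ across the interfaces (Lemma~\ref{lemma-interface}, Theorem~\ref{thm-superactions}).
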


The proof of this theorem will be given in Sects.~\ref{sec-trf}--\ref{sec-mfe}. Theorem~\ref{thm-main} states that---under suitable assumptions---initial values that are close to a plane wave lead to numerical solutions that remain close to a plane wave for a long time, i.e., the numerical solution is concentrated in a single Fourier mode over long times. The closeness is measured by the Sobolev $H^s$-norm of $\Fc_{\neg \ell}(u)$.  This implies long-time \emph{orbital stability} in $H^s$, i.e., the numerical solution stays close to the orbit \eqref{eq-pwnum}, see \cite[Subsect.~3.4]{Faoua}.

The bounds $s_0$, $C$ and $\ep_0$ are independent of the discretization parameters $h$ and $K$ subject to Assumptions 1 and 2 and of the small parameters $\ep$ and $\widehat{\ep}$.  In more detail, the proof of Theorem~\ref{thm-main} shows that $s_0$ depends only on $d$ and $s_2$; $C$ depends only on $c_1$ and $\rh_1$; and  $\ep_0$ depends on $c_1$,  $c_2$, $d$, $\ell$, $N$, $s$, $s_2$, $\de_2$ and $\rh_1$.

\begin{remark}
The conclusion of Theorem~\ref{thm-main}  equally  holds if the (Lie-Trotter) splitting \eqref{eq-splitstep} is replaced by its symmetric version, the \emph{Strang splitting}
\[
u_K^{n+1} = \Ph_{\text{linear}}^{h/2} \circ \Ph_{\text{nonlinear}}^h \circ \Ph_{\text{linear}}^{h/2} (u_K^n).
\]
In fact, both numerical schemes differ only by half a time step with the linear flow at the beginning and at the end of the interval of integration. This does not affect the long-time stability. The same remark applies to the other version of the Strang splitting,
\[
u_K^{n+1} = \Ph_{\text{nonlinear}}^{h/2} \circ \Ph_{\text{linear}}^h \circ \Ph_{\text{nonlinear}}^{h/2} (u_K^n).
\]
\end{remark}

\subsection{Discussion of the assumptions}\label{subsec-discassum}

Assumptions~\ref{assum-linearstability} and \ref{assum-nonres}  for Theorem~\ref{thm-main}  exclude two different types of (potential) instabilities that show up on different time scales. Assumption~\ref{assum-linearstability}, which is derived in \cite[Sect.~5]{Weideman1986} and \cite{Lakoba2013} with a slightly different meaning of $\rh$, ensures that (numerical) plane wave solutions \eqref{eq-pwnum} are \emph{linearly stable}. This means that all eigenvalues of the linearization of the numerical scheme \eqref{eq-splitting} around a plane wave \eqref{eq-pwnum} are of modulus one. Eigenvalues of modulus larger than one would lead to an instability right from the start.  In contrast,  the non-resonance condition of Assumption~\ref{assum-nonres} on the frequencies is crucial for the proof of our \emph{long-time} result. Indeed, the longer the time interval under consideration is, the more the nonlinear interaction becomes relevant, possibly leading to resonance phenomena if the frequencies are resonant or close to resonant. 

A non-resonance condition as stated in Assumption~\ref{assum-nonres} is typically required in a long-time analysis of   Hamiltonian partial differential equations and their numerical discretizations, see for example \cite{Cohen2008,Faou2009a,Faou2009b,GaucklerDiss,Gauckler2010b}  for uses and discussions of similar conditions. Note, however, that we do not (and cannot) impose this non-resonance condition on the completely resonant frequencies of the nonlinear \Sch equation \eqref{eq-nls} and its discretization by the split-step Fourier method, but only on the frequencies $\om_j$  of \eqref{eq-freq} for  the linearization around a plane wave.

In Sect.~\ref{sec-nonres} we will prove the following theorem on a sufficient (though not necessary) condition under which Assumptions~\ref{assum-linearstability} and \ref{assum-nonres} hold in the case of a constant plane wave ($\ell=0$) for many values of $\rh=\norm{u_K^0}_0$ and $h$.

\begin{theorem}\label{thm-main2}
Let $\ell=0$, and fix $\rh_0> 0$ with
\begin{equation}\label{eq-linearstabilityexact}
1+2\la\rh_0^2>0,
\end{equation}
$h_0>0$ and $N\ge 2$. Then we have the following result.

For  every $\ga>0$  there exists a subset $\Pc(\ga)$ of $[0,\rh_0]\times[0,h_0]$ of Lebesgue measure $\abs{\Pc(\ga)}\ge \rh_0h_0-\ga$ such that Assumptions~\ref{assum-linearstability} and~\ref{assum-nonres} hold for all $(\rh,h)\in\Pc(\ga)$ and all $K$ that satisfy the restriction
\begin{equation}\label{eq-strongcfl}
dhK^2 + 2h\rh_0^2 \le \frac{\pi}{N+1}
\end{equation}
with small parameter $\widehat{\ep}=C_2 h^2$ and constants $c_1=c_1(\rh_0)$, $C_2=C_2(\rh_0)$, $c_2=c_2(h_0,N,\ga,\rh_0)$, $\de_2=1$ and $s_2=5^4N^5$. 
\end{theorem}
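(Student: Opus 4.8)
\textbf{Proof plan for Theorem~\ref{thm-main2}.}
The strategy is to treat the three requirements of Assumption~\ref{assum-nonres} together with Assumption~\ref{assum-linearstability} one at a time, exploiting the strong CFL restriction \eqref{eq-strongcfl} to linearize the trigonometric functions and reduce everything to statements about the ``exact'' frequencies $n(j) = \sfrac12|j|^2 + (\ell\cdot j) = \sfrac12|j|^2$ (recall $\ell=0$), or rather about small perturbations thereof. First I would record the consequence of \eqref{eq-strongcfl}: for all $j\in\ind$ we have $0 < n(j)h \le dhK^2 \le \pi/(N+1)$, so $n(j)h$ stays in a region where $\sin$ is positive and bounded below by a multiple of $n(j)h$, and in particular $\sgn(\sin(n(j)h) + h\la\rh^2\cos(n(j)h)) = +1$ for $h$ small (here one uses $2h\rh_0^2 \le \pi/(N+1)$ to control the correction term). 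This already makes the frequencies $\om_j$ in \eqref{eq-freq} and the $\arccos$ in \eqref{eq-linearstability} unambiguous. A Taylor expansion of $\cos(n(j)h) - h\la\rh^2\sin(n(j)h)$ then gives $\bigl(\cos(n(j)h) - h\la\rh^2\sin(n(j)h)\bigr)^2 = 1 - (1+2\la\rh^2)(n(j)h)^2 + O((n(j)h)^4) + O(h^3)$, and since $n(j)h$ is bounded and $n(j) \ge \sfrac12$, the hypothesis \eqref{eq-linearstabilityexact} $1+2\la\rh_0^2 > 0$ yields Assumption~\ref{assum-linearstability} with $c_1 = c_1(\rh_0) > 0$ for all $h \le h_0$ small (shrinking $h_0$ if necessary, which is harmless since the claim is measure-theoretic). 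The same expansion shows $\arccos(\cos(n(j)h) - h\la\rh^2\sin(n(j)h)) = n(j)h\sqrt{1+2\la\rh^2} + O(h^3)$ uniformly in $j$, whence $\om_j = n(j)\sqrt{1+2\la\rh^2} + O(h^2)$ for $\ell=0$. This identifies the natural \emph{modified frequencies} $\omt_j := n(j)\sqrt{1+2\la\rh^2} = \sfrac12|j|^2\sqrt{1+2\la\rh^2}$, giving Assumption~\ref{assum-nonres}(a) with $\widehat\ep = C_2(\rh_0)h^2$.

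With these modified frequencies in hand, part (c) is immediate and deterministic: since $\omt_j = n(j)\,\omt$ with $\omt := \sqrt{1+2\la\rh^2}$, a relation $h\sum_j k_j\omt_j \in 2\pi\Z$ reads $h\,\omt\sum_j k_j n(j) \in 2\pi\Z$; but $|\sum_j k_j n(j)| \le (N+1)\max_j n(j) \le (N+1)dK^2/2$ and $h\,\omt \le $ (something $\le \pi/((N+1)d K^2)$ times a bounded factor) by \eqref{eq-strongcfl}, so $|h\,\omt\sum_j k_j n(j)| < 2\pi$, forcing $\sum_j k_j n(j) = 0$, i.e.\ $\sum_{n(j)=m} k_j = 0$ for every $m$ (collecting $j$ with equal $n(j)$). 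Here one must be slightly careful that the admissible vectors in (c) also have $\sum|k_j| \le N+1$, which is exactly what the CFL bound is calibrated for; and the condition $1+2\la\rh^2 > 0$ guarantees $\omt$ is a well-defined positive real. The only place where a null set must be removed is part (b), the Diophantine-type estimate, and this is the heart of the matter.

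For part (b), fix $h \le h_0$ for the moment and think of $\rh$ as the variable. The quantity $\de = |(\e^{\iu(\sum_j k_j\omt_j)h}-1)/h|$ is, up to bounded factors, $\mathrm{dist}(h\,\omt\sum_j k_j n(j),\,2\pi\Z)/h$; but by the CFL bound just used, $h\,\omt\sum_j k_j n(j)$ lies strictly inside $(-2\pi,2\pi)$, so in fact $\de$ is comparable to $|\omt\cdot K|$ where $K := \sum_j k_j n(j)$ is a fixed nonzero integer (nonzero since otherwise it falls under case (c) and need not be considered — one restricts attention to $k$ with $K\ne 0$, and for $K=0$ the premise $\de\le\de_2=1$ combined with $\de$ being bounded below away from zero when $K\ne 0$ means $\de$ can be small only if $K=0$, handled by (c); I would spell this dichotomy out). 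So the required inequality becomes: whenever $\omt|K|$ is small, $|l|^4 / \prod_j |j|^{2|k_j|} \le c_2 (\omt|K|)^{N/s_2}$ — i.e.\ $\omt$ must not be too close to the rationals $0$ (meaning $1+2\la\rh^2$ near a value making $\sqrt{1+2\la\rh^2}\cdot K$ near $0$, impossible for $K\ne 0$ since $\omt \ge$ some positive constant... wait, no: $\omt$ is bounded below, so $\omt|K| \ge \omt \ge c > 0$, hence $\de$ is bounded below and the premise $\de\le\de_2$ is simply never met for $K\ne 0$!). This is the clean structural point: because $\ell=0$ and the CFL condition prevents wraparound mod $2\pi$, \emph{there are no near-resonances at all among the modified frequencies except the exact ones}, so (b) holds vacuously — with any $c_2$, and the set $\Pc(\ga)$ can be taken to be all of $[0,\rh_0]\times[0,h_0]$ minus at most the $h$-values (a null set in the product, or absorbed into shrinking $h_0$) for which the earlier small-$h$ expansions fail.

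\textbf{Where the difficulty lies.} The subtle points are not the non-resonance estimate itself (which collapses thanks to the CFL bound) but (i) justifying that the sign function in \eqref{eq-freq} is constantly $+1$ and that $\arccos$ is applied to an argument in $(-1,1)$, uniformly over all $j\in\ind$ and all admissible $(\rh,h)$ — this requires tracking the competition between the main term $n(j)h$ and the $O(h\rh^2)$ correction, and is where \eqref{eq-strongcfl}'s second summand $2h\rh_0^2$ enters; (ii) getting the expansion $\om_j = n(j)\sqrt{1+2\la\rh^2} + O(h^2)$ with the error \emph{uniform in $j$} even though $n(j)$ ranges up to $\sim dK^2/2$ — one rescales by $\theta := n(j)h \in (0,\pi/(N+1)]$, writes everything as a function of $\theta$ alone (for fixed $\rh$), Taylor expands in $h$ at fixed $\theta$... actually the error is $O(n(j)h^3) = O(\theta h^2)$, bounded by $O(h^2)$ since $\theta$ is bounded, giving the claimed $\widehat\ep = C_2 h^2$; and (iii) the bookkeeping to see that the measure-theoretic exception $\Pc(\ga)^c$ is genuinely forced only by the finitely-many-$h$ issue (or can be dispensed with entirely by shrinking $h_0$), so that one may in fact take $|\Pc(\ga)| = \rh_0 h_0$ for $\ga$ small — the statement allows a deficit $\ga$ presumably only for safety. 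I would present the argument so that the CFL condition \eqref{eq-strongcfl} is invoked exactly three times — once for linear stability, once for the vacuousness of (b), once for (c) — making transparent why each summand of \eqref{eq-strongcfl} is needed.
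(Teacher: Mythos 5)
There is a fatal algebra error at the very start that undermines your entire approach. You claim that
\[
\bigl(\cos(n(j)h) - h\lambda\rho^2\sin(n(j)h)\bigr)^2 = 1 - (1+2\lambda\rho^2)(n(j)h)^2 + O((n(j)h)^4)+O(h^3),
\]
but a correct expansion gives $1 - n(j)\bigl(n(j)+2\lambda\rho^2\bigr)h^2 + \cdots$, i.e., the coefficient of $2\lambda\rho^2$ is $n(j)h^2$, not $(n(j)h)^2$. Consequently the leading behaviour of $\omega_j$ for $\ell=0$ is $\sqrt{n(j)^2 + 2\lambda\rho^2\,n(j)}$, \emph{not} $n(j)\sqrt{1+2\lambda\rho^2}$; the two coincide only for $n(j)=1$, and otherwise they differ by a quantity of order $\rho^2$, not $O(h^2)$. (Also: for $\ell=0$ you have $n(j)=|j|^2$, not $\frac12|j|^2$.) So Assumption~\ref{assum-nonres}(a) fails outright for your proposed $\omt_j$. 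The paper's modified frequencies are $\omt_j = |j|^2 - \mu_{|j|^2} + \sqrt{\mu_{|j|^2}^2 + 2\lambda\sigma\mu_{|j|^2}}$ with $\mu_n = \tan(nh)/h$, constructed precisely so that $\omt_j(0)=\om_j(0)$ and $\dd(\om_j-\omt_j)/\dd\sigma = O(h^2)$ uniformly in $j$; as $h\to 0$ they tend to $\sqrt{|j|^4 + 2\lambda\sigma|j|^2}$, the exact linearization frequencies of \cite[Lemma~2.2]{Faoua}.

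This is not a cosmetic point: the key structural feature on which your argument rests — that all $\omt_j$ are integer multiples of a single irrational $\omt$, so that small divisors reduce to $\omt\sum_j k_j n(j)$ being small, which then forces the integer $\sum k_j n(j)$ to vanish — is \emph{destroyed} once the $\omt_j$ are $\sqrt{n(j)^2+2\lambda\rho^2 n(j)}$ (or the paper's refinement thereof). These are genuinely incommensurate, near-resonances do occur, and the Diophantine estimate in Assumption~\ref{assum-nonres}(b) is \emph{not} vacuous. The paper verifies it by Bambusi's measure-theoretic argument (Lemmas~\ref{lemma-det}, \ref{lemma-Pc}, Proposition~\ref{prop-bambusi}): one bounds the inverse of a Vandermonde-type matrix of derivatives $\dd^k\Omega_n/\dd\sigma^k$ to show that for each $(\kbf,m)$ the set of bad $\sigma=\rho^2$ has small measure, and then unions over $\kbf$, $\lbf$, $m$. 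The excluded set has positive measure for each fixed $h$, so the $\gamma$-deficit in $|\Pc(\gamma)|\ge\rho_0h_0-\gamma$ is essential, not "only for safety." Finally, even granting your (incorrect) frequencies, your step for part (c) has a gap: from $\sum_j k_j n(j)=0$ you cannot conclude $\sum_{n(j)=m}k_j=0$ for every $m$ — e.g., $\tilde k_2=1$, $\tilde k_4=-2$, $\tilde k_6=1$ gives $2-8+6=0$ without each $\tilde k_m$ vanishing. In the paper, (c) is instead deduced from (b) by choosing one representative $j$ per value of $n(j)$ and applying (b) with $\delta=0$.

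The part of your outline that does match the paper is the treatment of Assumption~\ref{assum-linearstability}: the CFL bound keeps $n(j)h\in(0,\pi/(N+1)]$ so that the sign in \eqref{eq-freq} is well-defined and $1-(\cos(n(j)h)-h\lambda\rho^2\sin(n(j)h))^2 \gtrsim (1+2\lambda\rho_0^2)h^2$, which is Lemma~\ref{lemma-stab-numericalvsanalytical}. But the non-resonance part of the theorem is where the work is, and that work cannot be avoided.
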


Theorem~\ref{thm-main} together with Theorem~\ref{thm-main2} is the discrete counterpart of \cite[Theorem~1.1]{Faoua}: for $\ell=0$, the long-time orbital stability of plane waves proven there for the exact solution transfers to the numerical discretization provided that the step-size restriction \eqref{eq-strongcfl} is fulfilled and that $\rh=\norm{u_K^0}_0$ and $h$ belong to a large set. In comparison with the result \cite[Theorem~1.1]{Faoua} for the exact solution, our discrete counterpart is valid on a time interval of length $\max(\ep,h^2)^{-N/2}$ instead of $\ep^{-N/2}$, and the value of $N$ is restricted by the step-size restriction~\eqref{eq-strongcfl}. These changes are due to the non-resonance condition that is much more involved for the numerical frequencies than for the analytical frequencies. 

Let us finally comment on condition~\eqref{eq-linearstabilityexact} in Theorem~\ref{thm-main2}. This condition ensures linear stability of (analytical) plane wave solutions \eqref{eq-pw} to the nonlinear Schr\"odinger equation \eqref{eq-nls}, i.e., that all eigenvalues of the linearization of the nonlinear Schr\"odinger equation around a plane wave \eqref{eq-pw} are real valued \cite{Weideman1986,Agrawal2006,Faoua}. 
Theorem~\ref{thm-main2} states in particular  that this implies linear stability of (numerical) plane wave solutions (\eqref{eq-linearstability} in Assumption~\ref{assum-linearstability}) under the step-size restriction~\eqref{eq-strongcfl}. Actually, weaker step-size restrictions that yield linear stability are discussed in detail in \cite[Sect.~5]{Weideman1986}, but \eqref{eq-strongcfl} is sufficient for our long-time result because it allows us to verify Assumption~\ref{assum-nonres}. On the other hand, \eqref{eq-linearstability} reduces to \eqref{eq-linearstabilityexact} (with $\rh_0=\rh$) in the limit $h\rightarrow 0$ for fixed $K$. 

For nonzero but small $\ell$, the condition of linear stability in Assumption~\ref{assum-linearstability} can still be expected to hold under a step-size restriction similar to \eqref{eq-strongcfl}. In this case, the frequencies $\om_j$ differ from those for $\ell=0$ only for large $j$ (we have $n(j)=\abs{j}^2$ and $\abs{\ell+j\bmod{2K}}=\abs{\ell-j\bmod{2K}}$ for all $j$ that are not large, and we have $c\abs{j}^2\le n(j)\le C\abs{j}^2$ for large $j$). This property can also be used to argue that the non-resonance condition of Assumption~\ref{assum-nonres} can be expected to hold for nonzero but small $\ell$, see Remark~\ref{rem-lne0} in Sect.~\ref{sec-nonres}. In one dimension ($d=1$), the linear stability of the split-step Fourier method for $\ell\ne 0$ has been recently analysed in detail by Lakoba \cite{Lakoba2013}.

\subsection{Numerical experiments}

We present numerical experiments which illustrate Theorem~\ref{thm-main} in situations that are not covered by Theorem~\ref{thm-main2}. They show in particular that the conditions of Theorem~\ref{thm-main2} are not necessary for the assumptions and conclusions of Theorem~\ref{thm-main} to hold.

Throughout, we let $\la=-1$ and $\rh^2 = 0.4$ such that $1+2\la\rh^2>0$, and hence we have linear stability of plane waves in the exact solution. We consider the nonlinear \Sch equation in dimension one ($d=1$) with an initial value that is chosen randomly such that for $\ell=0$, $s=5$ and $\ep=0.01$
\[
\norm{u_K^0}_0 = \rho \myand \norm{\Fc_{\neg\ell}(u_K^0)}_s = \ep,
\]
i.e., the initial value is, in the $H^5$ norm, up to $0.01$ close to the constant plane wave~$\rh$. 

For the numerical discretization with the split-step Fourier method we use $2^5$ points for the Fourier collocation in space ($K=2^4$), and we consider three different step-sizes for the discretization in time: the step-size $h=0.04$ that does not fulfill the step-size restriction~\eqref{eq-strongcfl} of Theorem~\ref{thm-main2} and the slightly larger step-sizes $h=0.042$ and $h=0.044$. 

We have checked numerically for $h=0.04$ and $h=0.044$ that Assumptions~\ref{assum-linearstability} and Assumption~\ref{assum-nonres} of Theorem~\ref{thm-main} are fulfilled for $N\le 5$ with $c_1=0.2$, $\widehat{\ep}=0$, $c_2=8$, $\de_2=0.1$ and $s_2=5N$ for $h=0.04$ and $s_2=8N/5$ for $h=0.044$. Note, however, that the step-size restriction~\eqref{eq-strongcfl} of Theorem~\ref{thm-main2} is not fulfilled. For Figure~\ref{fig-exp1} we compute the numerical solution with step-size $h=0.04$ on a long time interval $t\le 10^6$ and plot the absolute values of the Fourier coefficients on two subintervals of length $200$. The same is done in Figure~\ref{fig-exp2} with the step-size $h=0.044$. As stated in Theorem~\ref{thm-main} we observe in both cases that the solution stays concentrated in the $\ell$th Fourier mode over long times.

\begin{figure}[t]
\centering
\includegraphics{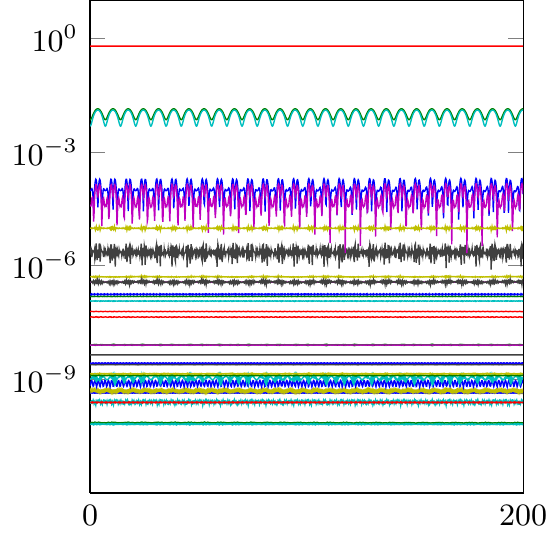} \includegraphics{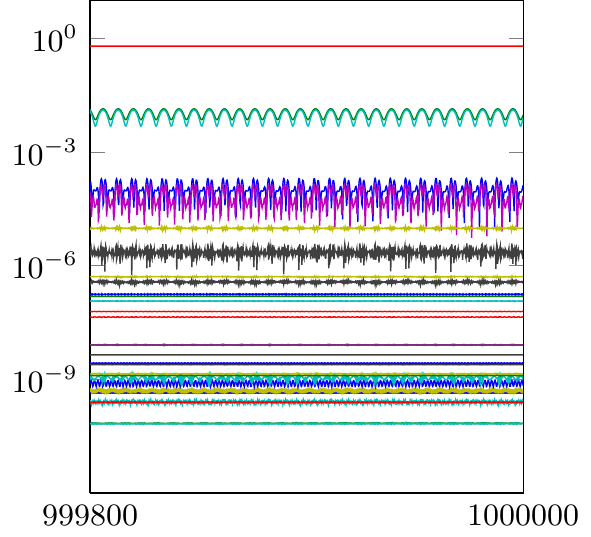}
\caption{Evolution of the absolute values of the Fourier coefficients for $h=0.04$.}\label{fig-exp1}
\end{figure}

\begin{figure}
\centering
\includegraphics{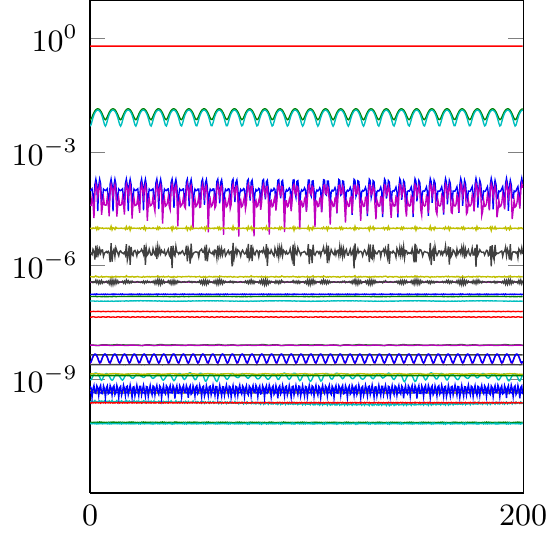} \includegraphics{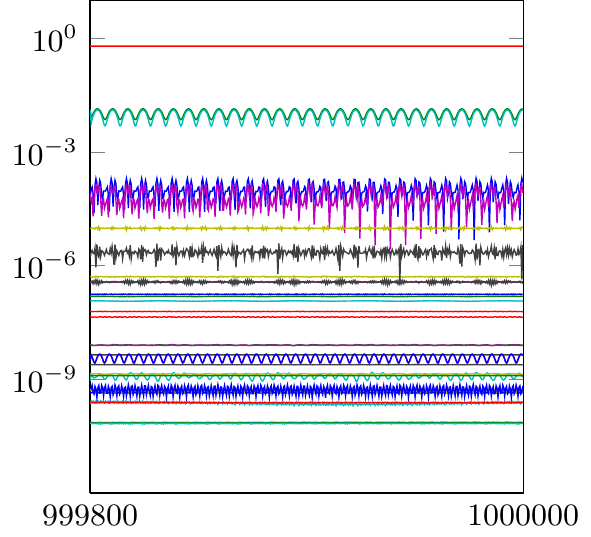}
\caption{Evolution of the absolute values of the Fourier coefficients for $h=0.044$.}\label{fig-exp2}
\end{figure}

For the intermediate step size $h=0.042$, however, Assumption~\ref{assum-linearstability} of Theorem~\ref{thm-main} is not fulfilled. In Figure~\ref{fig-exp3} we again plot the absolute values of the Fourier coefficients of the numerical solution and clearly observe an instability. 

\begin{figure}
\centering
\includegraphics{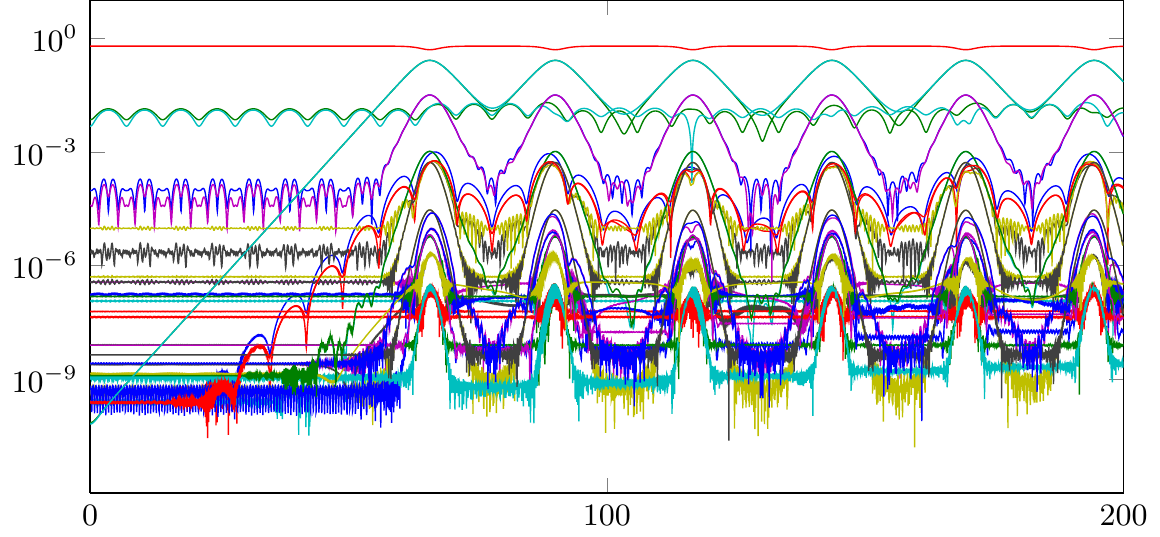}
\caption{Evolution of the absolute values of the Fourier coefficients for $h=0.042$.}\label{fig-exp3}
\end{figure}

\section{Reductions and transformations}\label{sec-trf}

From now on we omit the index $K$ of the numerical solution $u_K^n$, $n=0,1,2,\ldots$. Instead, we denote by $u_j^n$ the $j$th Fourier coefficient of $u^n$: $u^n (x)  = \sum_{j\in\disc} u_j^n \e^{\iu (j\cdot x)}$. We work with the numerical scheme~\eqref{eq-splitstep} in terms of these Fourier coefficients, which takes the form (see \eqref{eq-flow-linear} and \eqref{eq-flow-nonlinear})
\begin{equation}\label{eq-split-u}
u^{n+1}_j = \e^{-\iu \abs{j}^2 h} \sum_{m=0}^{\infty} \frac{(-\iu h\la)^m}{m!} \!\!\!\!\!\!\!\! \sum_{\substack{k^1+\dots+k^{m+1}\\ -l^{1}-\dots-l^{m} \equiv j \bmod{2K}}} \!\!\!\!\!\!\!\! u^n_{k^1} \dotsm u^n_{k^{m+1}} \overline{u}^n_{l^{1}} \dotsm \overline{u}^n_{l^{m}}.
\end{equation}

The goal of this section is to eliminate the $\ell$th Fourier mode, which is not small, from $u^n$. To this end we apply similar reductions and transformations to those for the exact solution in \cite[Sect.~2]{Faoua}, which can be summarised as follows.
\begin{itemize}
\item Transformation $u\leftrightarrow v$ with $u=(u_j)_{j\in\disc}$, $v=(v_j)_{j\in\disc}$: shift to the case $\ell=0$, see Subsect.~\ref{subsec-var-v}.
\item Transformation $v\leftrightarrow (a,\theta,w)$ with $a,\theta\in\R$ and $w=(w_j)_{j\in\disc\setminus\{0\}}$: introduction of polar coordinates $(a,\theta)$ for $v_0$ and rotations $w_j$ of $v_j$, see Subsect.~\ref{subsec-var-wz}.
\item Reduction $(a,\theta,w) \leftrightarrow w$: elimination of $a$ and $\theta$ using conservation of mass and gauge invariance, see Subsect.~\ref{subsec-var-wz}.
\item Transformation $w \leftrightarrow \xi$ with $\xi=(\xi_j)_{j\in\disc\setminus\{0\}}$: diagonalization of the linear part, see Subsects.~\ref{subsec-numfrequ}--\ref{subsec-var-xi}.
\end{itemize}
These transformations and reductions are applied directly to the numerical scheme in the form~\eqref{eq-split-u}. In Subsect.~\ref{subsec-splitstruc-exact}, we consider them from a different perspective, namely from the perspective of the differential equations that form the two steps of the splitting integrator~\eqref{eq-splitstep}. Both perspectives will be important in the following Sect.~\ref{sec-mfe}.

\subsection{Shift to the case \texorpdfstring{$\ell=0$}{l = 0}}\label{subsec-var-v}

We introduce new variables
\[
v_j = u_{\ell+j \bmod{2K}} \myfor j\in\disc.
\]
The numerical scheme \eqref{eq-split-u} in the new variables $v^n$ becomes
\begin{equation}\label{eq-split-v}
v^{n+1}_j = \e^{-\iu \abs{\ell+j\bmod{2K}}^2 h} \sum_{m=0}^{\infty} \frac{(-\iu h\la)^m}{m!} \!\!\!\!\!\!\!\! \sum_{\substack{k^1+\dots+k^{m+1}\\ -l^{1}-\dots-l^{m} \equiv j \bmod{2K}}} \!\!\!\!\!\!\!\! v^n_{k^1} \dotsm v^n_{k^{m+1}} \overline{v}^n_{l^{1}} \dotsm \overline{v}^n_{l^{m}}.
\end{equation}

\subsection{Elimination of the zero mode}\label{subsec-var-wz}

We introduce polar coordinates $(a,\theta)$ for $v_0$,
\[
v_0 = a \e^{\iu\theta} \with a = \abs{v_0},
\]
and new variables $w_j$, $0\ne j\in\disc$, by
\begin{equation}\label{eq-ind}
v_j = w_j\e^{\iu \theta} \myfor j\in\ind = \disc \setminus \{0\}.
\end{equation}
In these new variables $(a,\theta,w)$ with $w=(w_j)_{j\in\ind}$ the numerical scheme \eqref{eq-split-v} becomes
\begin{equation}\label{eq-split-w}\begin{split}
w^{n+1}_j &= \e^{-\iu \abs{\ell+j\bmod{2K}}^2 h} \e^{\iu(\theta^n-\theta^{n+1})} \sum_{m=0}^{\infty} \frac{(-\iu h\la)^m}{m!}\\
 &\qquad \sum_{\substack{k^1+\dots+k^{m+1}\\ -l^{1}-\dots-l^{m} \equiv j \bmod{2K}}} \!\!\!\!\!\!\!\! 
 w^n_{k^1} \dotsm w^n_{k^{m+1}} \overline{w}^n_{l^{1}} \dotsm \overline{w}^n_{l^{m}},
\end{split}\end{equation}
where we use the convention $w_0^n=\overline{w}_0^n=a^n$.

Now, we eliminate $a$ and $\theta$ from \eqref{eq-split-w}. For the elimination of $a$ we observe that the split-step Fourier method \eqref{eq-split-u} conserves mass,
\[
\normbig{ u^{n+1}}_0 = \sum_{j\in\disc} \abs{u_j^{n+1}}^2 = \sum_{j\in\disc} \abs{u_j^{n}}^2 = \normbig{ u^{n}}_0,
\]
a fact that can be easily derived from the representations \eqref{eq-flow-linear} and \eqref{eq-flow-nonlinear} of the flows composing the numerical scheme and the discrete Parseval identity $\sum_{j\in\disc} \abs{u_j^{n}}^2 = (2 K)^{-d} \sum_{k\in\disc} \abs{u^n(x_k)}^2$.
The conservation of mass allows us to express $a^{n}$ in terms of $w_j^{n}$, $j\in\ind$, and $\rh = \norm{u^0}_0$,
\begin{equation}\label{eq-w0}
a^{n} = \Bigl( \rho^2- \sum_{j\in\ind} \abs{w_j^{n}}^2 \Bigr)^{1/2}.
\end{equation}
Also the factor $\e^{\iu(\theta^n-\theta^{n+1})}$ in \eqref{eq-split-w} can be expressed in terms of $w_j$ using \eqref{eq-split-w} for $j=0$:
\begin{equation}\label{eq-theta}\begin{split}
\e^{\iu(\theta^n-\theta^{n+1})} &= \frac{\e^{\iu\abs{\ell}^2h}}{w^{n+1}_0} \sum_{m=0}^{\infty} \frac{(\iu h\la)^m}{m!} \!\!\!\!\!\!\!\! \sum_{\substack{k^1+\dots+k^{m+1}\\ -l^{1}-\dots-l^{m} \equiv 0 \bmod{2K}}} \!\!\!\!\!\!\!\! \overline{w}^n_{k^1} \dotsm \overline{w}^n_{k^{m+1}} w^n_{l^{1}} \dotsm w^n_{l^{m}}.
\end{split}\end{equation}
Hence, $a=w_0=\overline{w}_0$ and $\e^{\iu\theta}$ are determined by $w_j$, $j\in\ind$. The numerical scheme \eqref{eq-split-w} is therefore completely described by the reduced set of variables $w = (w_j)_{j\in\ind}$.

Now, we can replace $w_0^n=a^n$ in \eqref{eq-split-w} and \eqref{eq-theta} by \eqref{eq-w0}. Furthermore, we can use \eqref{eq-w0} with $n+1$ instead of $n$ and $\abs{w_j^{n+1}}^2$ replaced by \eqref{eq-split-w} to replace $w_0^{n+1}$ in \eqref{eq-theta}. For sufficiently small $w$ this leads, after a Taylor expansion of $(\rh^2 - \dots)^{\pm 1/2}$, to an equation for $w^{n+1}$ of the following form (with right-hand side depending only on $w^n$):
\begin{equation}\label{eq-split-z}\begin{split}
w^{n+1}_j &= \e^{-\iu (\abs{\ell+j\bmod{2K}}^2 - \abs{\ell}^2) h} \Bigl( (1-\iu h\la\rh^2)w_j^n - \iu h\la\rh^2 \overline{w}_{-j}^n\\
 &\qquad + \sum_{m+m'=2}^{\infty} \!\!\!\! \sum_{\substack{k^1+\dots+k^m\\ -l^1-\dots-l^{m'} \equiv j \bmod{2K}}} \!\!\!\!\!\!\!\! h \widetilde{Q}_{j,k,l} w^n_{k^1}\dotsm w^n_{k^m} \overline{w}^n_{l^1}\dotsm \overline{w}^n_{l^{m'}} \Bigr).
\end{split}\end{equation}
The  subscripts  here and in the following all belong to the reduced set $\ind$.

\subsection{Linear stability and numerical frequencies}\label{subsec-numfrequ}

The linear part in equation \eqref{eq-split-z} couples $w_j$ to $\overline{w}_{-j}$. This leads us to consider the equation for $w_j$ together with the one for $\overline{w}_{-j}$,
\[
\begin{pmatrix}
w^{n+1}_j\\
\overline{w}^{n+1}_{-j}
\end{pmatrix}
= \e^{-\iu(\abs{\ell+j\bmod{2K}}^2/2 - \abs{\ell-j\bmod{2K}}^2/2)h} A_j \begin{pmatrix}
w^{n}_j\\
\overline{w}^{n}_{-j}
\end{pmatrix}
+ \text{ higher order terms}
\]
with the matrix 
\[
A_{j} = \begin{pmatrix} \alpha_j & \beta_j \\ \overline{\beta_j} &\overline{\alpha_j} \end{pmatrix},
\]
where
\begin{align*}
\al_j &= (1-\iu h\la\rh^2) \e^{-\iu (\abs{\ell+j\bmod{2K}}^2/2 + \abs{\ell-j\bmod{2K}}^2/2 - \abs{\ell}^2)h},\\
\be_j &= -\iu h\la\rh^2 \e^{-\iu (\abs{\ell+j\bmod{2K}}^2/2 + \abs{\ell-j\bmod{2K}}^2/2 - \abs{\ell}^2)h}.
\end{align*}
This matrix has $\abs{\alpha_j}^2-\abs{\beta_j}^2=1$ and its eigenvalues are 
\[
\lambda_j^\pm = \ReT(\alpha_j)\pm \iu \sgn(\ImT(\alpha_j))\sqrt{1-\ReT(\alpha_j)^2}.
\] 
The reason for including $\sgn(\ImT(\alpha_j))$ in the definition of the eigenvalues $\lambda_j^\pm$ will become clear in the following Subsect.~\ref{subsec-var-xi}. 

Assumption~\ref{assum-linearstability} ensures that $\ReT(\alpha_j)^2\le 1$, and hence the eigenvalues $\la_j^\pm$ of $A$ are of modulus one: We have
\[
\e^{-\iu(\abs{\ell+j\bmod{2K}}^2/2 - \abs{\ell-j\bmod{2K}}^2/2)h} \la_j^\pm = \e^{\mp\iu\om_j h}
\]
with the \emph{numerical frequencies} $\om_j$ from \eqref{eq-freq},
\[
\om_j = \sfrac12\abs{\ell+j\bmod{2K}}^2 - \sfrac12\abs{\ell-j\bmod{2K}}^2 + \frac{\arccos(\ReT(\alpha_j))}{-h\sgn(\ImT(\alpha_j))},
\]
where the branch of $\arccos$ with values in $[0,\pi]$ is used. Note that eigenvalues of $A_j$ of modulus greater than one would lead to a growth of the corresponding modes in the linearization of \eqref{eq-split-z}. Assumption~\ref{assum-linearstability} excludes this scenario and thus ensures linear stability of the split-step Fourier method.

\subsection{Diagonalization of the linear part}\label{subsec-var-xi}

We introduce new variables $\xi_j$ that diagonalize the linear part of \eqref{eq-split-z}:
\[
\begin{pmatrix}
\xi_j\\
\overline{\xi}_{-j}
\end{pmatrix} = S_j
\begin{pmatrix}
w_j\\
\overline{w}_{-j}
\end{pmatrix},
\]
where, with the notation of the previous subsection, 
\begin{equation}\label{eq-Sj}
S_j^{-1} = \frac{1}{\sqrt{\abs{\beta_j}^2-\abs{\la_j^+ -\alpha_j}^2}} \begin{pmatrix}
\beta_j & \la_j^- - \overline{\alpha_j}\\
\la_j^+ - \alpha_j & \overline{\beta_j}
\end{pmatrix}
\end{equation}
such that
\[
\e^{-\iu(\abs{\ell+j\bmod{2K}}^2/2 - \abs{\ell-j\bmod{2K}}^2/2)h} S_j A_{j} S_j^{-1} = \begin{pmatrix}
\e^{-\iu\om_jh} & 0\\
0 & \e^{\iu\om_jh}
\end{pmatrix}.
\]
Note that 
\begin{equation}\label{eq-aux-diag}
\abs{\beta_j}^2-\abs{\la_j^+ -\alpha_j}^2 = 2\sqrt{1-\ReT(\alpha_j)^2} \Bigl( \abs{\ImT(\alpha_j)} - \sqrt{1-\ReT(\alpha_j)^2} \Bigr) > 0 ,
\end{equation}
and hence this change of variables, which defines $\xi_j$ and $\overline{\xi}_j$, is well defined because of the structure of $S_j$ (this is the reason for including the sign of $\ImT(\al_j)$ in the definition of the numerical frequencies). Moreover, it is symplectic since $\det(S_j)=1$. With this change of variables, \eqref{eq-split-z} is transformed to
\begin{equation}\label{eq-split-xi}
\xi^{n+1}_j = \e^{-\iu\om_j h} \xi_j^n  + \sum_{m+m'=2}^{\infty} \!\!\!\! \sum_{\substack{k^1+\dots+k^m\\ -l^1-\dots-l^{m'} \equiv j \bmod{2K}}} \!\!\!\!\!\!\!\!\!\!\!\! h Q_{j,k,l} \xi^n_{k^1}\dotsm \xi^n_{k^m} \overline{\xi}^n_{l^1}\dotsm \overline{\xi}^n_{l^{m'}}.
\end{equation}

\subsection{The splitting structure of the numerical scheme in the new variables}\label{subsec-splitstruc-exact}

Recall that in the original variables $u$
\[
u^{n+1} = \Ph_{\text{linear}}^h \circ \Ph_{\text{nonlinear}}^h (u^n),
\]
see equation~\eqref{eq-splitstep}. Here, $\Ph_{\text{linear}}^h=\Ph_{\breve{H}_0}^h$ is the flow at time $h$ of the Hamiltonian differential equation with Hamiltonian function $\breve{H}_0$,
\[
\iu \dot{u}_j = \abs{j}^2 u_j = \frac{\partial \breve{H}_0}{\partial \overline{u}_j} (u,\overline{u}) \with \breve{H}_0(u,\overline{u}) = \sum_{j} \abs{j}^2 u_{j} \overline{u}_{j}.
\]
Correspondingly, $\Ph_{\text{nonlinear}}^h = \Ph_{\breve{P}}^h$ is the flow at time $h$ of the Hamiltonian differential equation with Hamiltonian function $\breve{P}$,
\[
\iu \dot{u}_j = \frac{\partial \breve{P}}{\partial \overline{u}_j} (u,\overline{u}) \with \breve{P}(u,\overline{u}) = \frac{\la}{2} \sum_{j^1+j^2-j^3-j^4 \equiv 0 \bmod{2K}} \!\!\!\! u_{j^1}u_{j^2} \overline{u}_{j^3}\overline{u}_{j^4}.
\]
Now, we consider the transformations $u \leftrightarrow v \leftrightarrow (a,\theta,w) \leftrightarrow w \leftrightarrow \xi$ from the previous subsections on the level of these differential equations (instead of their flows, as we have done in the previous subsections).

\medskip\noindent{\bf Shift $u\leftrightarrow v$.} 
After the change of variables $u\leftrightarrow v$ described in Subsect.~\ref{subsec-var-v} (leading to the numerical scheme~\eqref{eq-split-v}) the splitting scheme becomes
\[
v^{n+1} = \Ph^h_{\check{H}_0} \circ \Ph_{\check{P}}^h (v^n)
\]
with the Hamiltonian functions
\[
\check{H}_0 (v,\overline{v}) = \sum_{j} \abs{\ell+j \bmod{2K}}^2 v_{j} \overline{v}_{j}
\]
and 
\[
\check{P} (v,\overline{v}) = \frac{\la}{2} \sum_{j^1+j^2-j^3-j^4 \equiv 0 \bmod{2K}} \!\!\!\! v_{j^1}v_{j^2} \overline{v}_{j^3}\overline{v}_{j^4}.
\]

\medskip\noindent{\bf Transformation $v\leftrightarrow (a,\theta,w)$.} 
In the variables $(a,\theta,w)$ introduced at the beginning of Subsect.~\ref{subsec-var-wz} (leading to the numerical scheme~\eqref{eq-split-w}), the flow of the Hamiltonian differential equation with Hamiltonian function $\check{H}_0$ has to be replaced by the flow of
\begin{equation}\label{eq-aux2-wa}
\iu \dot{w}_j = \dot{\theta} w_j + \abs{\ell+j\bmod{2K}}^2 w_j.
\end{equation}
The corresponding equation for $a=w_0$ becomes, after taking the real part,
\begin{equation}\label{eq-aux2-wa2}
0 = \dot{\theta} a + \abs{\ell}^2 a.
\end{equation}
Correspondingly, the flow of the Hamiltonian differential equation with Hamiltonian function $\check{P}$ has to be replaced by the flow of
\begin{equation}\label{eq-aux-wa}
\iu \dot{w}_j = \dot{\theta} w_j + \frac{\partial \widehat{P}}{\partial \overline{w}_j}(a,\theta,w,\overline{w}) \with \widehat{P}(a,\theta,w,\overline{w}) = \check{P}(v,\overline{v}).
\end{equation}
Note that the function $\widehat{P}$ is actually independent of $\theta$ (gauge invariance). The equation for $a=w_0$ becomes, after taking the real part,
\begin{equation}\label{eq-aux-wa2}
0 = \dot{\theta} a + \frac12 \frac{\partial \widehat{P}}{\partial a}(a,\theta,w,\overline{w}).
\end{equation} 

\medskip\noindent{\bf Reduction $(a,\theta,w)\leftrightarrow w$.} 
Solving \eqref{eq-aux2-wa2} for $\dot{\theta}$ and inserting this into the equations \eqref{eq-aux2-wa} for $j\ne 0$ shows that \eqref{eq-aux2-wa} becomes, in the reduced set of variables $w$ from Subsect.~\ref{subsec-var-wz} (with the numerical scheme~\eqref{eq-split-z}),
\[
\iu \dot{w}_j =\frac{\partial \widetilde{H}_0}{\partial \overline{w}_j}( w,\overline{w}) \with \widetilde{H}_0(w,\overline{w}) = \sum_{j} \bigl(\abs{\ell+j \bmod{2K}}^2 - \abs{\ell}^2 \bigr) w_{j} \overline{w}_{j}.
\]
Solving \eqref{eq-aux-wa2} for $\dot{\theta}$ and inserting this into the equations \eqref{eq-aux-wa} for $j\ne 0$ yields
\[
\iu \dot{w}_j = \frac{\partial \widehat{P}}{\partial \overline{w}_j}(a,\theta, w,\overline{w}) + \frac{-w_j}{2a} \frac{\partial \widehat{P}}{\partial a}(a,\theta, w,\overline{w}).
\]
Using 
\[
\frac{\partial a}{\partial \overline{w}_j}(w,\overline{w}) = \frac{-w_j}{2 a}
\]
with $a$ given by \eqref{eq-w0}, we see that the equation~\eqref{eq-aux-wa} becomes, in the reduced set of variables $w$ from Subsect.~\ref{subsec-var-wz} (with the numerical scheme~\eqref{eq-split-z}),
\[
\iu \dot{w}_j =\frac{\partial \widetilde{P}}{\partial \overline{w}_j}( w,\overline{w}) \with \widetilde{P}(w,\overline{w}) = \widehat{P}(a,\theta,w,\overline{w}),
\]
which surprisingly is again of Hamiltonian form. We hence have
\[
w^{n+1} = \Ph^h_{\widetilde{H}_0} \circ \Ph_{\widetilde{P}}^h (w^n).
\]
The splitting integrator in the reduced set of variables $w$ is still a Hamiltonian splitting, a splitting into two Hamiltonian equations.

\medskip\noindent{\bf Transformation $w\leftrightarrow \xi$.} 
Concerning the final change of variables $w\leftrightarrow\xi$ of Subsect.~\ref{subsec-var-xi} (leading to the numerical scheme~\eqref{eq-split-xi}) we note first that the matrix $S_j$ was chosen in such a way that it is symplectic. We therefore end up with
\begin{subequations}\label{eq-split-diffeq}
\begin{equation}
\xi^{n+1} = \Ph_{H_0}^h \circ \Ph_{P}^h (\xi^n)
\end{equation}
with
\begin{equation}
H_0(\xi,\overline{\xi}) = \widetilde{H}_0(w,\overline{w}) \myand P(\xi,\overline{\xi}) = \widetilde{P}(w,\overline{w}).
\end{equation}
\end{subequations}

While it is an obvious observation that the numerical scheme in the new variables $\xi$ is still a splitting scheme, it is highly remarkable that the split equations retain their Hamiltonian structure. 

By virtue of the expansion \eqref{eq-split-xi}, we have a concrete expression for the flow $\Ph_{P}^h(\xi^n) = \bigl(\Ph_{H_0}^h\bigr)^{-1} (\xi^{n+1})$,
\begin{equation}\label{eq-splittingstructure}
\Ph_{P}^h (\xi^n)
 = S_j
\begin{pmatrix}
\e^{\iu(\abs{\ell+j\bmod{2K}}^2-\abs{\ell}^2) h} & 0\\
0 & \e^{-\iu(\abs{\ell+j\bmod{2K}}^2-\abs{\ell}^2) h}
\end{pmatrix}
S_j^{-1} 
\begin{pmatrix}
\xi_j^{n+1}\\
\overline{\xi}_{-j}^{n+1}
\end{pmatrix}.
\end{equation}
For later purposes we also introduce an expansion
\begin{equation}\label{eq-P}
P(\xi,\overline{\xi}) = \sum_{m+m'=2}^\infty \sum_{k\in\ind^m,\,l\in\ind^{m'}} P_{k,l} \xi_{k^1}\dotsm\xi_{k^m}\overline{\xi}_{l^1}\dotsm\overline{\xi}_{l^{m'}}
\end{equation}
of the Hamiltonian function $P$.

\subsection{Estimates for the transformation and for the transformed equation}

We derive some bounds for the change of variables $u\leftrightarrow \xi$ described in Subsects.~\ref{subsec-var-v}--\ref{subsec-var-xi}. We assume throughout that Assumption~\ref{assum-linearstability} is fulfilled.

Note that $\abs{w_j}=\abs{u_{\ell+j \bmod{2K}}}$ for $j\in\ind$, and hence we first consider the last transformation $w\leftrightarrow\xi$ of Subsect.~\ref{subsec-var-xi} described by the matrices $S_j$ \eqref{eq-Sj}.

\begin{lemma}\label{lemma-Sj}
The absolute values of the entries of the matrices $S_j$ and $S_j^{-1}$ are bounded by $\sqrt{1+\rh^2/(2\sqrt{c_1})}$, independently of $j$ and $h$.
\end{lemma}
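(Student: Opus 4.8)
The plan is to work directly from the explicit formula \eqref{eq-Sj} for $S_j^{-1}$ together with the formulas for $\alpha_j$, $\beta_j$, $\lambda_j^\pm$ and the identity \eqref{eq-aux-diag}. First I would observe that $\abs{\beta_j} = h\la\rh^2$ and $\ImT(\alpha_j) = -h\la\rh^2 \cos(n(j)h) - \sin(n(j)h)$, so that by definition of $n(j)$ in \eqref{eq-nj} one has $\ReT(\alpha_j) = \cos(n(j)h) - h\la\rh^2\sin(n(j)h)$; hence Assumption~\ref{assum-linearstability} reads exactly $\ReT(\alpha_j)^2 \le 1 - c_1 h^2$, i.e.\ $1 - \ReT(\alpha_j)^2 \ge c_1 h^2 > 0$. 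Next, using $\abs{\alpha_j}^2 - \abs{\beta_j}^2 = 1$ we get $\ImT(\alpha_j)^2 = 1 - \ReT(\alpha_j)^2 + \abs{\beta_j}^2$, so $\abs{\ImT(\alpha_j)} - \sqrt{1-\ReT(\alpha_j)^2} = \abs{\beta_j}^2 / \bigl(\abs{\ImT(\alpha_j)} + \sqrt{1-\ReT(\alpha_j)^2}\bigr)$. Plugging this into \eqref{eq-aux-diag} gives
\[
\abs{\beta_j}^2 - \abs{\la_j^+ - \alpha_j}^2 = \frac{2\sqrt{1-\ReT(\alpha_j)^2}\,\abs{\beta_j}^2}{\abs{\ImT(\alpha_j)} + \sqrt{1-\ReT(\alpha_j)^2}}.
\]

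The second step is to bound the four entries of $S_j^{-1}$. The off-diagonal-type entry $\beta_j$ (and likewise $\overline{\beta_j}$) contributes, after division by $\sqrt{\abs{\beta_j}^2 - \abs{\la_j^+-\alpha_j}^2}$, a quantity of modulus
\[
\frac{\abs{\beta_j}}{\sqrt{\abs{\beta_j}^2 - \abs{\la_j^+-\alpha_j}^2}} = \sqrt{\frac{\abs{\ImT(\alpha_j)} + \sqrt{1-\ReT(\alpha_j)^2}}{2\sqrt{1-\ReT(\alpha_j)^2}}}\,;
\]
for the entries $\la_j^+ - \alpha_j$ and $\la_j^- - \overline{\alpha_j}$ I would note $\la_j^\pm - \alpha_j = \mp\,\iu\,\ImT(\alpha_j) \pm \iu\,\sgn(\ImT(\alpha_j))\sqrt{1-\ReT(\alpha_j)^2}$ has modulus $\bigl|\,\abs{\ImT(\alpha_j)} - \sqrt{1-\ReT(\alpha_j)^2}\,\bigr| = \abs{\ImT(\alpha_j)} - \sqrt{1-\ReT(\alpha_j)^2}$ (the bracket is nonnegative by \eqref{eq-aux-diag}), which after division by $\sqrt{\abs{\beta_j}^2 - \abs{\la_j^+-\alpha_j}^2}$ gives exactly the same expression as above with numerator and denominator swapped under the square root, hence its reciprocal. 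So all entries of $S_j^{-1}$ have modulus either $t_j$ or $t_j^{-1}$ where $t_j^2 = \bigl(\abs{\ImT(\alpha_j)} + \sqrt{1-\ReT(\alpha_j)^2}\bigr)/\bigl(2\sqrt{1-\ReT(\alpha_j)^2}\bigr) \ge 1$. The same computation for $S_j = (S_j^{-1})^{-1}$ (which for a matrix of this form with determinant one is obtained by swapping diagonal entries and negating off-diagonal ones, up to the normalization) shows its entries also have modulus $t_j$ or $t_j^{-1}$, so it suffices to bound $t_j$ from above.

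For the final step I estimate $t_j^2$. Since $\abs{\ImT(\alpha_j)}^2 = 1 - \ReT(\alpha_j)^2 + \abs{\beta_j}^2 \le (\sqrt{1-\ReT(\alpha_j)^2} + \abs{\beta_j})^2$ — wait, more directly, $\abs{\ImT(\alpha_j)} \le \sqrt{1-\ReT(\alpha_j)^2} + \abs{\beta_j}^2/\sqrt{1-\ReT(\alpha_j)^2}$ from the reciprocal identity above, hence
\[
t_j^2 = \frac{\abs{\ImT(\alpha_j)} + \sqrt{1-\ReT(\alpha_j)^2}}{2\sqrt{1-\ReT(\alpha_j)^2}} \le 1 + \frac{\abs{\beta_j}^2}{2\bigl(1-\ReT(\alpha_j)^2\bigr)} \le 1 + \frac{(h\la\rh^2)^2}{2 c_1 h^2} = 1 + \frac{\rh^4}{2c_1},
\]
using $\abs{\la}=1$ and the linear stability bound $1 - \ReT(\alpha_j)^2 \ge c_1 h^2$. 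This gives $t_j \le \sqrt{1 + \rh^4/(2c_1)}$, which is slightly stronger than the claimed bound $\sqrt{1+\rh^2/(2\sqrt{c_1})}$ when $\rh \ge 1$ and follows from it otherwise — in any case, using $\abs{\beta_j} = h\la\rh^2 \le \rh^2$ and $\sqrt{1-\ReT(\alpha_j)^2} \ge \sqrt{c_1}\,h$ in the cruder bound $t_j^2 \le 1 + \abs{\beta_j}/\bigl(2\sqrt{1-\ReT(\alpha_j)^2}\bigr)\cdot\bigl(\abs{\beta_j}/\sqrt{1-\ReT(\alpha_j)^2}\bigr)$ and estimating one factor by $\abs{\beta_j}/\sqrt{1-\ReT(\alpha_j)^2} \le \rh^2/(\sqrt{c_1}h)$ is the wrong scaling; instead I bound $\abs{\ImT(\alpha_j)}+\sqrt{1-\ReT(\alpha_j)^2} \le 2\abs{\ImT(\alpha_j)} \le 2(\abs{h\la\rh^2\cos(n(j)h)}+\abs{\sin(n(j)h)}) \le 2(h\rh^2 + 1)$ is also not quite it. The cleanest route, which I will write out, is: $t_j^2 - 1 = \bigl(\abs{\ImT(\alpha_j)}-\sqrt{1-\ReT(\alpha_j)^2}\bigr)/\bigl(2\sqrt{1-\ReT(\alpha_j)^2}\bigr) = \abs{\beta_j}^2/\bigl(2\sqrt{1-\ReT(\alpha_j)^2}(\abs{\ImT(\alpha_j)}+\sqrt{1-\ReT(\alpha_j)^2})\bigr) \le \abs{\beta_j}^2/\bigl(2(1-\ReT(\alpha_j)^2)\bigr) \le \rh^4/(2c_1)$, since $\abs{\ImT(\alpha_j)} \ge \sqrt{1-\ReT(\alpha_j)^2}$. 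The main (minor) obstacle is purely bookkeeping: correctly identifying which of the four entry-moduli equal $t_j$ versus $t_j^{-1}$ for both $S_j$ and $S_j^{-1}$, and checking the claimed constant $\sqrt{1+\rh^2/(2\sqrt{c_1})}$ against the sharper $\sqrt{1+\rh^4/(2c_1)}$ one naturally obtains — the two agree up to harmless constants, and for the purposes of the paper any such $j$- and $h$-independent bound suffices.
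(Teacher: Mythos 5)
Your approach is the same as the paper's: starting from the explicit formula \eqref{eq-Sj} and the identity \eqref{eq-aux-diag}, you compute the squared moduli of the (normalized) entries of $S_j^{-1}$ and obtain
\[
\absbigg{\frac{\be_j}{\sqrt{\abs{\be_j}^2-\abs{\la_j^+-\al_j}^2}}}^2 = \frac{\abs{\ImT\al_j}+\sqrt{1-\ReT(\al_j)^2}}{2\sqrt{1-\ReT(\al_j)^2}}, \quad
\absbigg{\frac{\la_j^+-\al_j}{\sqrt{\abs{\be_j}^2-\abs{\la_j^+-\al_j}^2}}}^2 = \frac{\abs{\ImT\al_j}-\sqrt{1-\ReT(\al_j)^2}}{2\sqrt{1-\ReT(\al_j)^2}},
\]
exactly as in the paper, and then invoke $\sqrt{1-\ReT(\al_j)^2}\ge\sqrt{c_1}\,h$ from Assumption~\ref{assum-linearstability}. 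Two small remarks. First, a slip: the second modulus is not $t_j^{-1}$ where $t_j^2$ is the first, but $s_j$ with $t_j^2-s_j^2=1$ (from $\abs{\al_j}^2-\abs{\be_j}^2=1$), not $t_js_j=1$; the verification of ``reciprocal'' would require $\abs{\be_j}^2 = 4(1-\ReT(\al_j)^2)$, which is false in general. Your conclusion that bounding $t_j$ suffices survives anyway, since $s_j=\sqrt{t_j^2-1}\le t_j$. Second, for the final estimate the paper uses the subadditivity bound $\abs{\ImT\al_j}=\sqrt{(1-\ReT(\al_j)^2)+h^2\rh^4}\le\sqrt{1-\ReT(\al_j)^2}+h\rh^2$, which gives $t_j^2\le 1+h\rh^2/(2\sqrt{1-\ReT(\al_j)^2})\le 1+\rh^2/(2\sqrt{c_1})$, precisely the stated constant; you instead rationalize $\abs{\ImT\al_j}-\sqrt{1-\ReT(\al_j)^2}=\abs{\be_j}^2/(\abs{\ImT\al_j}+\sqrt{1-\ReT(\al_j)^2})$ and land on $t_j^2\le 1+\rh^4/(2c_1)$. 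These bounds are not comparable in general (one is the square of the other in the quantity $h\rh^2/\sqrt{1-\ReT(\al_j)^2}$), so your constant does not reproduce the stated one; but as you note, any bound depending only on $\rh$ and $c_1$ and independent of $j,h$ serves identically in Lemma~\ref{lemma-norm-equivalent} and beyond, so the discrepancy is harmless.
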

\begin{proof}
With the notations of Subsect.~\ref{subsec-numfrequ} we have by~\eqref{eq-aux-diag}
\begin{gather*}
\absbigg{\frac{\be_j}{\sqrt{\abs{\be_j}^2-\abs{\la_j^+-\al_j}^2}}}^2 = \frac{\abs{\ImT(\al_j)}+\sqrt{1-\ReT(\al_j)^2}}{2 \sqrt{1-\ReT(\al_j)^2}},\\
\absbigg{\frac{\la_j^+-\al_j}{\sqrt{\abs{\be_j}^2-\abs{\la_j^+-\al_j}^2}}}^2 = \frac{\abs{\ImT(\al_j)}-\sqrt{1-\ReT(\al_j)^2}}{2 \sqrt{1-\ReT(\al_j)^2}}.
\end{gather*}
This proves the statement of the lemma since $\abs{\ImT(\al_j)}\le \sqrt{1-\ReT(\al_j)^2} + h\rh^2$ and since $\sqrt{1-\ReT(\al_j)^2} \ge \sqrt{c_1} h$ by Assumption~\ref{assum-linearstability}.
\end{proof}

Now we consider the norm
\[
\norm{\xi}_s=\Bigl(\sum_{j\in\ind} \abs{j}^{2s} \abs{\xi_j}^2\Bigr)^{1/2},
\]
i.e., $\norm{\xi}_s$ is the Sobolev $H^s$ norm of   the function  $\sum_{j\in\ind} \xi_j \e^{\iu(j\cdot x)}$ as introduced in Subsect.~\ref{subsec-mainresult}. The previous Lemma~\ref{lemma-Sj} implies the following result.

\begin{lemma}\label{lemma-norm-equivalent}
For the change of variables $u\leftrightarrow \xi$ there exist positive constants $\widehat{c}$ and $\widehat{C}$ depending only on $c_1$ and an upper bound of $\rh$ such that
\[
\widehat{c} \norm{\xi}_s \le \norm{\Fc_{\neg \ell}(u)}_s \le \widehat{C} \norm{\xi}_s. \tag*{\qed}
\]
\end{lemma}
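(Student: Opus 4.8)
The plan is to derive Lemma~\ref{lemma-norm-equivalent} directly from Lemma~\ref{lemma-Sj} by composing the norm estimates along the chain of transformations $u\leftrightarrow v\leftrightarrow(a,\theta,w)\leftrightarrow w\leftrightarrow\xi$, using that the only non-trivial step for the $H^s$-norm is the last one. First I would note that the shift $u\leftrightarrow v$, $v_j=u_{\ell+j\bmod{2K}}$, together with the polar/rotation step $v_j=w_j\e^{\iu\theta}$ for $j\in\ind$ and the reduction, do not change absolute values of the relevant Fourier coefficients: for $j\in\ind$ one has $\abs{w_j}=\abs{v_j}=\abs{u_{\ell+j\bmod{2K}}}$. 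Hence, by the very definition of $\Fc_{\neg\ell}$ and of $\norm{\cdot}_s$ in Subsect.~\ref{subsec-mainresult},
\[
\norm{\Fc_{\neg\ell}(u)}_s^2 = \sum_{0\ne j\in\disc} \abs{j}^{2s}\abs{u_{j+\ell\bmod{2K}}}^2 = \sum_{j\in\ind}\abs{j}^{2s}\abs{w_j}^2 = \norm{w}_s^2,
\]
so it remains only to compare $\norm{w}_s$ with $\norm{\xi}_s$.

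For that comparison I would use that the transformation $w\leftrightarrow\xi$ of Subsect.~\ref{subsec-var-xi} acts, for each $j\in\ind$, on the two-dimensional vector $(w_j,\overline{w}_{-j})^\top$ by the matrix $S_j$, with inverse $S_j^{-1}$ given in~\eqref{eq-Sj}. Grouping the indices $\ind$ into pairs $\{j,-j\}$, one has $\abs{\xi_j}^2+\abs{\xi_{-j}}^2 \le \big(\text{const}\big)\big(\abs{w_j}^2+\abs{w_{-j}}^2\big)$ and conversely, where the constant is controlled by the squared operator norm of $S_j$ resp.\ $S_j^{-1}$, which in turn is bounded by (a fixed multiple of) the squared entrywise bound $1+\rh^2/(2\sqrt{c_1})$ from Lemma~\ref{lemma-Sj}. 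Since $\abs{j}^{2s}$ and $\abs{-j}^{2s}$ coincide, multiplying by $\abs{j}^{2s}$ and summing over a set of representatives of the pairs gives
\[
\widehat{c}^{\,2}\norm{\xi}_s^2 \le \norm{w}_s^2 \le \widehat{C}^{\,2}\norm{\xi}_s^2
\]
with $\widehat{c},\widehat{C}$ depending only on $c_1$ and an upper bound for $\rh$ (indeed $\widehat{C}$ can be taken as a fixed multiple of $1+\rh_1^2/(2\sqrt{c_1})$ and $\widehat{c}$ as its reciprocal, using $\det S_j=1$ so that $S_j^{-1}$ obeys the same bound). Combining this with the identity $\norm{\Fc_{\neg\ell}(u)}_s=\norm{w}_s$ from the first step yields the claim.

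The only real subtlety—hardly an obstacle—is bookkeeping at the self-paired index, i.e.\ making sure the pairing $\{j,-j\}$ is handled correctly (for all $j\in\ind=\disc\setminus\{0\}$ in $\disc=\{-K,\dots,K-1\}^d$ one does have $-j\in\ind$, but $j=-j$ can occur componentwise only at coordinates equal to $0$; still the $2\times2$ block structure is exactly as written in Subsect.~\ref{subsec-var-xi}), and in converting the entrywise bound of Lemma~\ref{lemma-Sj} into a bound on $\abs{\xi_j}^2+\abs{\xi_{-j}}^2$, for which one simply uses $\abs{a+b}^2\le 2\abs{a}^2+2\abs{b}^2$. Thus the proof is essentially: identity for the first three transformations, plus a pairwise $2\times2$ estimate with constants from Lemma~\ref{lemma-Sj} for the last one. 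I expect no step to be genuinely hard; the main thing to get right is that the constants depend only on $c_1$ and an upper bound of $\rh$, not on $j$, $h$ or $K$, which is exactly what Lemma~\ref{lemma-Sj} delivers.
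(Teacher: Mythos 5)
Your proof is correct and takes exactly the route the paper intends: the identity $\abs{w_j}=\abs{u_{\ell+j\bmod 2K}}$ gives $\norm{\Fc_{\neg\ell}(u)}_s=\norm{w}_s$, and the $2\times 2$ block action of $S_j$ (with $\det S_j=1$) together with the $j$-, $h$- and $K$-independent entrywise bound of Lemma~\ref{lemma-Sj} gives the two-sided comparison of $\norm{w}_s$ and $\norm{\xi}_s$ with constants of the form a fixed multiple of $\sqrt{1+\rh^2/(2\sqrt{c_1})}$ and its reciprocal. The only slip is in the bookkeeping remark: $-j$ should be read as $-j\bmod 2K$ (literal $-j$ leaves $\disc$ when a component of $j$ equals $-K$), and the fixed points of that involution occur at components equal to $0$ \emph{or} $-K$, not only $0$; this does not affect the argument since $\abs{-j\bmod 2K}=\abs{j}$ in all cases.
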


In particular, the previous lemma shows that the condition $\norm{\Fc_{\neg \ell}(u^0)}_s \le \ep$ of Theorem~\ref{thm-main} becomes in the new variables $\xi$
\begin{equation}\label{eq-smallxi0}
\norm{\xi^0}_s \le \widehat{c}^{-1} \ep.
\end{equation}

We finally collect some estimates for the nonlinearity in the numerical scheme written in the new variables $\xi$  as given by  \eqref{eq-split-xi}. 

\begin{lemma}\label{lemma-nonlinearity}
The nonlinearity given by the coefficients $Q_{j,k,l}$ in \eqref{eq-split-xi} satisfies for $s>d/2$ 
\begin{align*}
\Bigl( \sum_{j\in\ind} \abs{j}^{2s} \Bigl( \!\!\!\! \sum_{\substack{k^1+\dots+k^m\\ -l^1-\dots-l^{m'} \equiv j \bmod{2K}}} \!\!\!\! \absbig{Q_{j,k,l} \et^1_{k^1}\dotsm \et^m_{k^m} &\et^{m+1}_{l^1}\dotsm \et^{m+m'}_{l^{m'}} } \Bigr)^2 \Bigr)^{1/2}\\
&\le C_{m,m',s} \norm{\et^1}_s \dotsm\norm{\et^{m+m'}}_s
\end{align*}
for vectors $\et^1,\dots,\et^{m+m'}\in \C^{\ind}$. The constants $C_{m,m',s}$ depend only on $m$, $m'$, $s$, $c_1$ and $\rh$ and satisfy
\[
\sum_{m+m'=2}^{\infty} C_{m,m',s} r^{m+m'} \le C
\]
for some positive constants $r$ and $C$ depending only on $c_1$, $s$ and $\rho$.
\end{lemma}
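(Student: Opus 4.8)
The strategy is to separate the estimate into an \emph{analytic} ingredient --- a discrete algebra property of the Sobolev norm $\norm{\cdot}_s$ that is insensitive to the aliasing in the convolution constraint --- and a \emph{combinatorial} ingredient --- uniform bounds on the coefficients $Q_{j,k,l}$ obtained by following them through the four transformations of Subsects.~\ref{subsec-var-v}--\ref{subsec-var-xi}. For the first ingredient I would prove that, for $s>d/2$, there is a constant $C_s$ depending only on $s$ and $d$ such that for every $p\ge1$ and all $\et^1,\dots,\et^p\in\C^\ind$,
\[
\Bigl(\sum_{j\in\ind}\abs{j}^{2s}\Bigl(\sum_{\pm k^1\pm\dots\pm k^p\equiv j\bmod{2K}}\abs{\et^1_{k^1}}\dotsm\abs{\et^p_{k^p}}\Bigr)^2\Bigr)^{1/2}\le C_s^{\,p}\,\norm{\et^1}_s\dotsm\norm{\et^p}_s
\]
with arbitrary signs (the complex conjugations in \eqref{eq-split-xi} disappear once one passes to absolute values). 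Extending the $\abs{\et^i}$ by zero to $\Z^d$, the inner sum equals $\sum_{r\in\Z^d}G(j+2Kr)$, where $G$ is the ordinary convolution of the $\abs{\et^i}$; since each $\abs{\et^i}$ is supported in a box of radius $K$, only $O(p^d)$ shifts $r$ contribute, and on the support of the summand one still has $\langle j\rangle^s\le(C_d p)^s\sum_i\langle k^i\rangle^s$ (trivially for $r=0$, because then $j=k^1+\dots+k^p$; and for $r\ne0$ because then $\abs{j+2Kr}_\infty\ge K\ge\abs{j}_\infty$ while $\abs{j+2Kr}_\infty\le\sum_i\abs{k^i}_\infty$). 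The usual argument, splitting the weight onto one factor and using $\sum_k\langle k\rangle^{-2s}<\infty$ together with Young's inequality, then gives the estimate, with $C_s^{\,p}$ growing only geometrically in $p$.

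For the second ingredient I would show $\sup_{j,k,l}\abs{Q_{j,k,l}}\le B_{m,m'}$ with $\sum_{m+m'=2}^\infty B_{m,m'}\,\rho_*^{\,m+m'}<\infty$ for a suitable $\rho_*>0$. This is a bookkeeping of the chain $u\leftrightarrow v\leftrightarrow(a,\theta,w)\leftrightarrow w\leftrightarrow\xi$ at the level of power series. In \eqref{eq-split-v} the homogeneous part of degree $2\mu+1$ carries the coefficient $\frac{h^\mu}{\mu!}$, which is bounded for $h$ in a bounded interval (an upper bound on $h$ is in any case forced by Assumption~\ref{assum-linearstability} when $\rho>0$). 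The reduction to $w$ substitutes $w_0=\bigl(\rho^2-\sum_{j\in\ind}\abs{w_j}^2\bigr)^{1/2}$ and multiplies by the gauge factor $\e^{\iu(\theta^n-\theta^{n+1})}=\e^{\iu\abs{\ell}^2h}(w_0^{n+1})^{-1}\cdot(\text{series in }w^n)$ of \eqref{eq-theta}; here $(\rho^2-y)^{\pm1/2}$ has Taylor coefficients bounded by $\rho^{\pm1}\rho^{-2p}$ on its degree-$2p$ part ($y=\sum\abs{w_j}^2$ being quadratic), and $(w_0^{n+1})^{-1}=\rho^{-1}(1+O(\norm{w^n}))^{-1}$ is a convergent power series in $w^n$ (after inserting \eqref{eq-split-w}) with coefficients $\le\rho^{-1}C^p$ on degree $p$. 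Composing these power series and using that, by a cancellation reflecting the conservation of mass, the $h$-independent part of every nonlinear coefficient of the reduced scheme vanishes, one obtains \eqref{eq-split-z} with $\abs{\widetilde Q_{j,k,l}}\le\widetilde C^{\,m+m'}$, uniformly in $j,k,l$ and in $h$ (the $j$-dependence entering only through phases of modulus one). Finally the linear change $w\leftrightarrow\xi$ of Subsect.~\ref{subsec-var-xi} multiplies this by at most $(4B)^{m+m'}$ with $B=\sqrt{1+\rho^2/(2\sqrt{c_1})}$ from Lemma~\ref{lemma-Sj}, so $B_{m,m'}=(4B\widetilde C)^{m+m'}$ works and $\rho_*$ may be any number below $(4B\widetilde C)^{-1}$.

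Combining the two ingredients is routine: for fixed $(m,m')$ one pulls $\sup_{j,k,l}\abs{Q_{j,k,l}}\le B_{m,m'}$ out of the convolution and applies the discrete algebra property to the sequences $\abs{\et^i}$, which gives the asserted bound with $C_{m,m',s}\le(p+1)\,B_{m,m'}\,C_s^{\,p}$ for $p=m+m'$ (the factor $p+1$ accounting for the pairs $(m,m')$ of fixed sum $p$). Then $\sum_{m+m'=2}^\infty C_{m,m',s}\,r^{m+m'}\le\sum_{p\ge2}(p+1)(4B\widetilde C\,C_s)^p r^p<\infty$ as soon as $r<(4B\widetilde C\,C_s)^{-1}$, and $r$, $C$ and the $C_{m,m',s}$ depend only on $c_1$, $s$ and $\rho$, as claimed. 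I expect the main obstacle to lie in the power-series bookkeeping of Step~2: one must check that the composite series built from the square root, the reciprocal $(w_0^{n+1})^{-1}$ and the $w$-dynamics of \eqref{eq-split-w} has a radius of convergence bounded below uniformly in $h$ and $K$, and --- the delicate point --- that a single power of $h$, no more and no less, can be extracted from each nonlinear monomial; this extraction is not termwise obvious and rests on the mass-conservation cancellation, and it is exactly what makes the explicit $h$ in \eqref{eq-split-xi} legitimate and the constants $C_{m,m',s}$ independent of $h$. The aliasing in the algebra property is a lesser but genuine technical point, since $\ind$ is a finite box and the convolved frequencies leave it before being folded back.
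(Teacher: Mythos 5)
Your proof follows the same two-step plan as the paper's (terse) argument: a uniform bound $\abs{Q_{j,k,l}}\le C^{m+m'}$ obtained by tracking the coefficients through Subsects.~\ref{subsec-var-v}--\ref{subsec-var-xi}, combined with a discrete Sobolev-algebra convolution estimate (the paper packages this via Cauchy--Schwarz and the bound $\sum\bigl(\abs{j}/\prod\abs{k^i}\bigr)^{2s}\le c^{m+m'}$ over the aliased index set; you split the weight onto one factor and invoke Young, which is an equivalent standard route), and your version is considerably more explicit on both counts, in particular on the folding by $2K\Z^d$. Two small remarks: the factor $p+1$ belongs in the final sum over pairs $(m,m')$ with $m+m'=p$, not inside the individual constant $C_{m,m',s}$; and the single power of $h$ in \eqref{eq-split-xi} is most cleanly seen from the scheme being the identity at $h=0$ in the $\xi$-variables (the transformations, including $S_j$, stay bounded as $h\to0$ by Lemma~\ref{lemma-Sj}) rather than from a specific mass-conservation cancellation --- mass conservation is what justifies the reduction $(a,\theta,w)\to w$, not the $h$-factorization.
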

\begin{proof}
(a) By carefully going through the construction of the coefficients $Q_{j,k,l}$ in Subsects.~\ref{subsec-var-wz} and~\ref{subsec-var-xi} one shows for the coefficients $Q_{j,k,l}$ that there exists a constant $C$ depending on $c_1$ and $\rh$ such that
\begin{equation}\label{eq-lemmanonlinearity-aux}
\abs{Q_{j,k,l}} \le C^{m+m'}
\end{equation}
for all $j\in\ind$, all $k\in\ind^m$ and all $l\in\ind^{m'}$. 

(b) The first estimate of the lemma follows by applying~\eqref{eq-lemmanonlinearity-aux} and the Cauchy-Schwarz inequality and by using
\[
\sum_{\substack{k^1+\dots+k^m\\ -l^1-\dots-l^{m'} \equiv j \bmod{2K}}} \biggl( \frac{\abs{j}}{\abs{k^1}\dotsm\abs{k^m}\abs{l^1}\dotsm\abs{l^{m'}}} \biggr)^{2s} \le c^{m+m'} 
\]
with a constant $c$ depending only on $s>d/2$. The second estimate of the lemma then follows also from~\eqref{eq-lemmanonlinearity-aux}.
\end{proof}

\section{Modulated Fourier expansions}\label{sec-mfe}

In this section we will prove Theorem~\ref{thm-main} using \emph{modulated Fourier expansions} originally introduced in \cite{Hairer2000}, see also \cite{Hairer2006}. Throughout we will work with the numerical scheme in the new variables $\xi$   introduced in Sect.~\ref{sec-trf}, see \eqref{eq-split-xi}.

There are two main steps:
\begin{itemize}
\item Construction of a short-time approximation of $\xi^n$ from \eqref{eq-split-xi} by a modulated Fourier expansion in Subsects.~\ref{subsec-mfe}--\ref{subsec-defect}. 
\item Almost-invariants of the modulated Fourier expansion that allow us to prove a result on a long time interval in Subsects.~\ref{subsec-splitstruc-mfe}--\ref{subsec-longtimenearcons}. 
\end{itemize}
For the first main step it is convenient to work with the numerical scheme as given by the composition of flows \eqref{eq-split-xi}, whereas for the derivation of the almost-invariants it is necessary to switch to the level of the differential equations whose flows compose the numerical scheme~\eqref{eq-split-diffeq}. We ultimately show that $\norm{\xi^n}_s$ stays of order $\ep$ for initial values $\xi^0$ of order $\ep$.
This preservation of smallness and regularity of $\xi^n$ is the main ingredient for the final proof of Theorem~\ref{thm-main} in Subsect.~\ref{subsec-proof1}. 

The proof via modulated Fourier expansions given here uses and combines ideas from several previous proofs using such expansions: The aforementioned  idea of switching between the flows and the differential equations is loosely based on \cite{GaucklerDiss,Gauckler2010b}, the construction of the modulated Fourier expansion with an asymptotic expansion is based on \cite{Gauckler2012,Hairer2012}, the idea of using modified frequencies $\omt_j$ instead of the original (numerical) frequencies $\om_j$ of \eqref{eq-freq}  for the modulated Fourier expansion is also used in \cite{Gaucklera}, the non-resonance condition in Assumption~\ref{assum-nonres} is used in a similar way to \cite{Cohen2008}, and the use of almost-invariants of the modulated Fourier expansion to prove long-time almost-conservation properties can be traced back to \cite{Hairer2000}. 

In the following analysis, the (generic) constants $C$, $s_0$ and $\de_0$ are all independent of the small parameters $\ep$ from \eqref{eq-smallxi0} and $\widehat{\ep}$ from Assumption~\ref{assum-nonres}. The constants $C$ and $\de_0$ will depend on the constants $c_1$, $c_2$, $s_2$, $\de_2$ and $N$ of Assumptions~\ref{assum-linearstability} and \ref{assum-nonres}, on $s$ from \eqref{eq-smallxi0}, on an upper bound of $\rh=\norm{u^0}_0$, on the index $\ell\in\disc$ from~\eqref{eq-pwnum} and on the dimension $d$. The constant $s_0$ will depend only on $d$ and $s_2$.

\subsection{Resonant modulated Fourier expansion}\label{subsec-mfe}

In order to motivate the modulated Fourier expansion we consider here, let us first have a look at \eqref{eq-split-xi} in the linear case (all $Q_{j,k,l}=0$). In this case, the evolution of the $j$th mode is given by the multiplication with $\e^{-\iu\om_j t}$. In the presence of the nonlinearity, we seek for an expansion, the modulated Fourier expansion, in terms of products of these exponentials that are multiplied (modulated) by slowly varying coefficients. 

There are two pitfalls in the present situation that have to be handled with care. First, it turns out that the frequencies $\om_j$ of \eqref{eq-freq} are inconvenient when it comes to resonance issues. Therefore we use the modified frequencies $\omt_j$ of Assumption~\ref{assum-nonres} instead and consider products of the exponentials $\e^{-\iu\omt_j t}$:
\[
\e^{- \iu (\kbf\cdot\omtbf) t} \with \kbf\cdot\omtbf = \sum_{j\in\ind} k_j\omt_j
\]
for vectors of integers $\kbf= (k_j)_{j\in\ind}\in\Z^\ind$ and the vector $\omtbf=(\omt_j)_{j\in\ind}$ of modified frequencies.

Second, the modified frequencies $\omt_j$ of Assumption~\ref{assum-nonres} are by definition exactly resonant, for instance $\omt_j = \omt_l$ for $\abs{j}=\abs{l}$ in the case $\ell=0$. Hence, we cannot distinguish all products $\e^{- \iu (\kbf\cdot\omtbf) t}$, and we therefore introduce the \emph{resonance module}
\[
\res = \bigl\{\, \kbf\in\Z^\ind : \kbf\cdot\omtbf = 0, \, j(\kbf) = 0 \,\bigr\},
\]
where
\[
j(\kbf) = \sum_{l\in\ind}k_l l \bmod{2K}.
\]
The restriction $j(\kbf) = 0$ in the definition of the resonance module comes from the fact that the products $\e^{- \iu (\kbf\cdot\omtbf) t}$ are attached to some specific mode $\xi_j$, namely $j=j(\kbf)$, as we will see in the following.

With these preliminaries, we introduce the \emph{resonant modulated Fourier expansion}
\begin{equation}\label{eq-mfe}
\xi_j(t) = \sum_{[\kbf]} z_j^{[\kbf]}(\de t) \e^{- \iu (\kbf\cdot\omtbf) t}
\end{equation}
Here,
\begin{equation}\label{eq-delta}
\de = \max(\ep , \widehat{\ep})^{1/2}
\end{equation}
is a small parameter and the sum is over all residue classes $[\kbf]\in\Z^\ind / \res$. The coefficients of the modulated Fourier expansion, the \emph{modulation functions} $z_j^{[\kbf]}$, are required to be polynomials on a slow time scale $\ta=\de t$ with $\de$ from \eqref{eq-delta} that have all derivatives bounded independently of the small parameters. By a slight abuse of notation we write in the following $z_j^\kbf$ instead of $z_j^{[\kbf]}$ and $\sum_{\kbf}$ instead of $\sum_{[\kbf]}$.

\subsection{Modulation equations}\label{subsec-modequations}

Requiring $\xi_j(t_n) = \xi_j^n$ for $n\ge 1$ with $\xi_j^n$ given by \eqref{eq-split-xi} yields, after a comparison of the coefficients of $\e^{\iu (\kbf\cdot\omtbf) t}$, \emph{modulation equations} for the modulation functions $z_j^\kbf$:
\begin{subequations}\label{eq-modsystem}
\begin{equation}\label{eq-modsystem-motion}\begin{split}
z_j^\kbf(\ta+\de h) \e^{-\iu (\kbf\cdot\omtbf) h} &= \e^{-\iu \om_j h} z_j^\kbf(\ta) + \sum_{m+m'=2}^{\infty} \sum_{\substack{\kbf^1+\dots+\kbf^m\\ - \lbf^1-\dots-\lbf^{m'}\in[\kbf]}}\\
& \sum_{k\in\ind^m,\, l\in\ind^{m'}} h Q_{j,k,l} z_{k^1}^{\kbf^1}(\ta) \dotsm z_{k^m}^{\kbf^m}(\ta) \overline{z}_{l^1}^{\lbf^1}(\ta)\dotsm \overline{z}_{l^{m'}}^{\lbf^{m'}}(\ta).
\end{split}\end{equation}
The condition $\xi_j(0) = \xi_j^0$ yields
\begin{equation}\label{eq-modsystem-init}
\sum_{\kbf} z_j^{\kbf}(0) = \xi_j^0.
\end{equation}
\end{subequations}

For the approximate solution of the modulation equations~\eqref{eq-modsystem} it is useful to expand the modulation functions in powers of $\ep$ and $\de$,
\begin{equation}\label{eq-mfexp}
z_j^\kbf(\ta) = \sum_{p=0}^\infty \ep \de^{p} z_{j,p}^\kbf(\ta)
\end{equation}
with polynomials $z_{j,p}^\kbf$ in $\ta = \de t$. We call the functions $z_{j,p}^\kbf$ \emph{modulation coefficient functions} and set $z_{j,p}^\kbf = 0$ for $p<0$. 
After dividing by $\de h\e^{-\iu(\kbf\cdot\omtbf)h}$, expanding $z_j^\kbf(\ta+\de h)$ around $\ta$ and (formally) comparing the coefficients of $\ep\de^{p}$, the modulation equations \eqref{eq-modsystem-motion} become
\begin{subequations}\label{eq-modsys}
\begin{equation}\label{eq-modsys-eqmot}\begin{split}
&\frac{1 - \e^{-\iu(\om_j-\kbf\cdot\omtbf)h}}{\de h} \, z_{j,p}^\kbf + \dot{z}_{j,p}^\kbf =  - \sum_{r = 2}^{\infty} \frac{ h^{r-1}}{r!} \frac{\dd^r}{\dd\ta^r} z_{j,p+1-r}^\kbf\\
 &\qquad\qquad   + \sum_{m+m'=2}^{\infty} \sum_{\substack{p_1+\dots+p_m\\ + q_1+\dots+q_{m'}=p+3-2(m+m')}} \frac{\ep^{m+m'-1}}{\de^{2m+2m'-2}} \sum_{\substack{\kbf^1+\dots+\kbf^m\\ - \lbf^1-\dots-\lbf^{m'}\in[\kbf]}} \\
&\qquad\qquad\qquad\qquad  \sum_{k\in\ind^m,\, l\in\ind^{m'}} \e^{\iu (\kbf\cdot\omtbf) h} Q_{j,k,l} z_{k^1,p_1}^{\kbf^1} \dotsm z_{k^m,p_m}^{\kbf^m} \overline{z}_{l^1,q_1}^{\lbf^1}\dotsm \overline{z}_{l^{m'},q_{m'}}^{\lbf^{m'}}.
\end{split}\end{equation}
Condition \eqref{eq-modsystem-init} yields
\begin{equation}\label{eq-modsys-init}
z_{j,p}^\jvec(0) = - \sum_{\kbf\ne \jvec} z_{j,p}^{\kbf}(0) + \begin{cases}
\ep^{-1}\xi_j^0, & p = 0,\\
0, & p>0,
\end{cases}
\end{equation}
where $\jvec$ denotes the $j$th unit vector in $\Z^\ind$.
\end{subequations}

\subsection{Construction of modulation functions}\label{subsec-constr}

We construct modulation functions $z_j^\kbf$ that solve the modulation equations \eqref{eq-modsystem} up to a small defect. We work with the asymptotic expansion~\eqref{eq-mfexp} and consider the equations~\eqref{eq-modsys}. The crucial observation is that the right-hand side of \eqref{eq-modsys-eqmot} depends only on modulation coefficient functions $z_{k,q}^\lbf$ with $q<p$. This allows us to solve the equations \eqref{eq-modsys} up to a small defect by the following simple recursion. 

Fix $p\ge 0$ and assume that we have computed all modulation coefficient functions $z_{j,q}^\kbf$ with $q<p$ (this is true for $p=0$). Equation \eqref{eq-modsys-eqmot} is then of the form
\[
\al z_{j,p}^\kbf + \dot{z}_{j,p}^\kbf = P
\]
with a polynomial $P$. The unique polynomial solution of this equation is given for $\al\ne 0$ by
\begin{equation}\label{eq-polsol}
z_{j,p}^\kbf(\ta) = \sum_{m=0}^{\deg(P)} (-1)^m \al^{-m-1} \frac{\dd^m}{\dd\ta^m}P(\ta).
\end{equation}
We therefore compute $z_{j,p}^\kbf$ for all $j$ and all $\kbf$ as follows. 
\begin{subequations}\label{eq-construction}
\begin{enumerate}
\item[(i)] For indices $(j,\kbf)$ with $j\ne j(\kbf)$ or $\norm{\widetilde{\kbf}}> p$ for all $\widetilde{\kbf}\in[\kbf]$ we set
\begin{equation}\label{eq-construction-singlewavelargek}
z_{j,p}^\kbf = 0.
\end{equation}
This is consistent with \eqref{eq-modsys-eqmot} since the right-hand side of this equation vanishes for these indices by induction (recall that $Q_{j,k,l}=0$ if $j\not\equiv k^1+\dots+k^m-l^1-\dots-l^{m'} \bmod{2K}$).
\item[(ii)] For indices $(j,\kbf)$ with $\abs{1 - \e^{-\iu(\om_j-\kbf\cdot\omtbf)h}}\ge \de h/2$ that are not covered by (i) we 
\begin{equation}\label{eq-construction-offdiag}
\text{compute $z_{j,p}^\kbf$ from \eqref{eq-modsys-eqmot} and \eqref{eq-polsol}.}
\end{equation}
Indeed, the factor in front of $z_{j,p}^\kbf$ in \eqref{eq-modsys-eqmot} is bounded for these indices away from zero, and the comparison of coefficients used to derive \eqref{eq-modsys-eqmot} thus makes sense.
\item[(iii)] For indices $(j,\kbf)\ne(j,\jvec)$ that are neither covered by (i) nor by (ii) we set
\begin{equation}\label{eq-nearres}
z_{j,p}^\kbf = 0.
\end{equation}
Of course, this introduces a defect which, however, can be controlled using the non-resonance condition of Assumption~\ref{assum-nonres} as we shall see in Subsect.~\ref{subsec-defect}.\\ For the considered indices $(j,\kbf)$ we have $\abs{1 - \e^{-\iu(\om_j-\kbf\cdot\omtbf)h}}< \de h/2$, and they are in this sense close to a resonance. We therefore call them \emph{near-resonant} in the following.
\item[(iv)] Having computed $z_{j,p}^\kbf$ for all $j$ and all $\kbf\ne\jvec$ in (i)--(iii) we can
\begin{equation}\label{eq-construction-diag-init}
\text{compute $z_{j,p}^\jvec(0)$ from \eqref{eq-modsys-init}.}
\end{equation}
Moreover, since the factor in front of $z_{j,p}^\kbf$ in \eqref{eq-modsys-eqmot} vanishes for $\kbf=\jvec$, we can
\begin{equation}\label{eq-construction-diag-der}
\text{compute $\dot{z}_{j,p}^\jvec$ from \eqref{eq-modsys-eqmot}.}
\end{equation}
This allows us to compute the diagonal modulation coefficient functions $z_{j,p}^\jvec$. 
\end{enumerate}
We stop the above construction \eqref{eq-construction} of modulation coefficient function $z_{j,p}^\kbf$ after $p=N$,
\begin{equation}\label{eq-cutoff}
z_{j,p}^\kbf = 0 \myfor p> N.
\end{equation}
\end{subequations}

It is clear that the construction leads to modulation coefficient functions $z_{j,p}^\kbf$ that are polynomials in $\ta$, of degree bounded by $p$. Moreover, we have
\begin{equation}\label{eq-p0}
z_{j,0}^\kbf = 0 \myfor \kbf\ne\jvec
\end{equation}
because the right-hand side of \eqref{eq-modsys-eqmot} vanishes for $p=0$.

\subsection{Size of the modulation functions}\label{subsec-size}

We estimate the modulation coefficient functions constructed in \eqref{eq-construction}. For fixed index $p$ we collect them in the vectors
\[
\zbf_p=(z^{\kbf}_{j,p})_{j\in\ind,\kbf\in\Z^\ind}.
\]
We also consider their rescalings
\begin{equation}\label{eq-Ga}
(\Gabf^{s-\widehat{s}}\zbf_p)^{\kbf}_{j} := \bigl(\Ga^\kbf\bigr)^{s-\widehat{s}} \cdot z^{\kbf}_{j,p} \with \Ga^\kbf := \min_{\widetilde{\kbf}\in[\kbf]} \Bigl( 2^{\norm{\widetilde{\kbf}}} \prod_{l\in\ind} \abs{l}^{\abs{\widetilde{k}_l}} \Bigr)
\end{equation}
and with $s\ge \widehat{s} := (d+1)/2$ such that Lemma~\ref{lemma-nonlinearity} is applicable for $s$ and $\widehat{s}$.

For vectors $\vbf=(v^\kbf_j)_{j\in\ind,\kbf\in\Z^\ind}$ of polynomials $v^{\kbf}_j=v^\kbf_j(\ta)$ in $\ta$ we use the norm 
\[
\normv{\vbf}_{s,\ta} = \normBig{\Bigl( \sum_{\kbf} \abs{v^{\kbf}_{j}}_{\ta} \Bigr)_{j\in\ind} }_{s} = \biggl( \sum_{j\in\ind} \abs{j}^{2s} \Bigl( \sum_{\kbf} \abs{v^{\kbf}_{j}}_{\ta} \Bigr)^2 \biggr)^{1/2},
\]
where
\[
\abs{v}_{\ta} = \sum_{m=0}^{\infty} \frac{1}{m!} \absbigg{\frac{\dd^m}{\dd\ta^m}v(\ta)}.
\]

\begin{lemma}\label{lemma-size}
The modulation coefficient functions \eqref{eq-construction} satisfy on $0\le\ta\le 1$ for $\de\le\de_0$ and $s\ge\widehat{s}$
\[
\normv{ \zbf_p }_{s,\ta} \le C \myand \normv{\Gabf^{s-\widehat{s}}\zbf_p}_{\widehat{s},\ta} \le C
\]
for all $p$ with constants $C$ and $\de_0$.
\end{lemma}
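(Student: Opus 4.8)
\textbf{Proof proposal for Lemma~\ref{lemma-size}.}

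The plan is to prove both estimates simultaneously by induction on $p$, following the recursive construction \eqref{eq-construction} and tracking how the two norms $\normv{\cdot}_{s,\ta}$ and $\normv{\Gabf^{s-\widehat s}\cdot}_{\widehat s,\ta}$ propagate through the four cases (i)--(iv). The base case $p=0$ is immediate: by \eqref{eq-p0} the only nonzero coefficient functions are the diagonal ones $z_{j,0}^\jvec$, which are constant in $\ta$ (since by \eqref{eq-construction-diag-der} their derivative comes from a right-hand side that vanishes at $p=0$), and by \eqref{eq-modsys-init} they satisfy $z_{j,0}^\jvec(0)=\ep^{-1}\xi_j^0$; hence $\normv{\zbf_0}_{s,\ta}=\ep^{-1}\norm{\xi^0}_s\le\widehat c^{-1}$ by \eqref{eq-smallxi0}, and the weighted norm is controlled likewise because $\Ga^\jvec=2\abs j$ and we have chosen $s_0$ large enough (depending on $d$) that this factor is absorbed — this is where the loss from $s$ to $\widehat s$ is built in.

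For the inductive step, fix $p\ge 1$ and assume the bounds hold for all indices $<p$. I would first bound the right-hand side $P$ of the modulation equation \eqref{eq-modsys-eqmot}. It has two contributions. The linear ``derivative'' terms $\sum_{r\ge 2}\frac{h^{r-1}}{r!}\frac{\dd^r}{\dd\ta^r}z_{j,p+1-r}^\kbf$ are handled by the inductive hypothesis together with the observation that $\abs{\cdot}_\ta$ dominates the $\abs{\cdot}_\ta$-norm of any $\ta$-derivative (up to the combinatorial $m!$ factors already built into $\abs{\cdot}_\ta$), so these cost only a factor $h$ and lower the index by at least one. The nonlinear terms are controlled by the key nonlinearity estimate of Lemma~\ref{lemma-nonlinearity}: the convolution sum over $k^1+\dots+k^m-l^1-\dots-l^{m'}\equiv j$, weighted by $\abs j^s$, is bounded by the product of the $\norm{\cdot}_s$-norms of the factors, and the sum over the frequency index vectors $\kbf^1,\dots,\lbf^{m'}$ with $\kbf^1+\dots-\lbf^{m'}\in[\kbf]$ is exactly what the $\sum_\kbf$ inside $\normv{\cdot}_{s,\ta}$ absorbs; the summable bound $\sum_{m+m'\ge 2}C_{m,m',s}r^{m+m'}\le C$ from Lemma~\ref{lemma-nonlinearity} takes care of the infinite sum over $m,m'$. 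The crucial power counting is that the prefactor $\ep^{m+m'-1}\de^{-(2m+2m'-2)}$ combined with the constraint $p_1+\dots+q_{m'}=p+3-2(m+m')$ and $\ep\le\de^2$ keeps everything bounded (each factor $z_{k,p_i}^{\kbf^i}$ carries a hidden $\ep\de^{p_i}$ from \eqref{eq-mfexp}, and one checks $\ep^{m+m'}\de^{\sum p_i+\sum q_i}\cdot\ep^{m+m'-1}\de^{-(2m+2m'-2)}=\ep\de^{p+1}$, matching the target scaling). For the weighted norm I would use that $\Ga^\kbf\le\Ga^{\kbf^1}\cdots\Ga^{\lbf^{m'}}$ (subadditivity of $\norm{\cdot}$ in the exponent and submultiplicativity of the products over $l$), which lets the weight distribute over the factors, again invoking Lemma~\ref{lemma-nonlinearity} now with index $\widehat s$.

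Once $P$ is bounded in both norms, I apply the three cases of the construction. In case (i) there is nothing to prove. In case (ii), where $\abs{1-\e^{-\iu(\om_j-\kbf\cdot\omtbf)h}}\ge\de h/2$, the explicit solution formula \eqref{eq-polsol} gives $z_{j,p}^\kbf=\sum_{m=0}^{\deg P}(-1)^m\al^{-m-1}\frac{\dd^m}{\dd\ta^m}P$ with $\abs\al\ge\de/2$ after dividing by $\de h$; here one must absorb the negative powers of $\al$, and this is \emph{the main obstacle}: naively $\al^{-m-1}$ is as large as $(\de/2)^{-m-1}$, which is not bounded as $\de\to 0$. The resolution, as in \cite{Gauckler2012,Hairer2012}, is that $z_{j,p}^\kbf$ is a polynomial of degree at most $p$, so the sum over $m$ runs only to $\deg P\le p-1$ (a finite, $\de$-independent range), and — more importantly — the target bound we are proving is on $\normv{\zbf_p}_{s,\ta}$ for the \emph{rescaled} coefficient functions $z_{j,p}^\kbf=\ep\de^p\cdot(\text{coefficient of }\ep\de^p)$; tracking the $\de$-powers carefully through \eqref{eq-mfexp} and \eqref{eq-modsys-eqmot} shows that the extra $\de^p$ in front exactly cancels the worst $\al^{-m-1}\sim\de^{-p}$, so the polynomial-degree bound is precisely what makes case (ii) work. (Concretely: $\frac{\dd^m}{\dd\ta^m}P$ reduces the $\ta$-degree, and $\abs{\cdot}_\ta$ already sums all derivatives with $1/m!$ weights, so the whole Neumann-type series is dominated by a $\de$-independent multiple of $\abs P_\ta$ once the degree cap is used.) In case (iii) we set $z_{j,p}^\kbf=0$, trivially consistent with the bound. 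Finally, in case (iv), the diagonal initial value $z_{j,p}^\jvec(0)$ is, by \eqref{eq-modsys-init}, minus the sum over $\kbf\ne\jvec$ of the already-bounded $z_{j,p}^\kbf(0)$, so it inherits the bound; and $\dot z_{j,p}^\jvec$ equals (minus) the right-hand side $P$ at $\kbf=\jvec$, already bounded, so $z_{j,p}^\jvec(\ta)=z_{j,p}^\jvec(0)+\int_0^\ta\dot z_{j,p}^\jvec$ stays bounded on $0\le\ta\le1$. Summing the contributions over the finitely many values $p=0,\dots,N$ and choosing $\de_0$ small enough that all the above ``for $\de\le\de_0$'' provisos hold completes the induction and the proof. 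The only genuinely delicate point, to reiterate, is the $\de$-power bookkeeping in case (ii): one must verify that the degree-$\le p$ polynomial structure of the modulation coefficient functions, combined with the asymptotic expansion \eqref{eq-mfexp}, makes the small-divisor factors $\al^{-1}$ harmless after rescaling.
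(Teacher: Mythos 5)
Your overall plan matches the paper's: induction on $p$, using the algebra property $\abs{vw}_\ta\le\abs{v}_\ta\abs{w}_\ta$, the bounds of Lemma~\ref{lemma-nonlinearity}, the degree bound on the polynomials, and the submultiplicativity of $\Ga^\kbf$ for the weighted norm. The base case and the treatment of cases (i), (iii), (iv) are fine. However, there is a substantive error in case (ii) that causes you to manufacture a difficulty that isn't there, and to resolve it by an argument that does not stand up.

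The quantity $\al$ in \eqref{eq-polsol} is the coefficient of $z_{j,p}^\kbf$ in \eqref{eq-modsys-eqmot}, namely $\al=\bigl(1-\e^{-\iu(\om_j-\kbf\cdot\omtbf)h}\bigr)/(\de h)$. In case (ii) the hypothesis is $\absbig{1-\e^{-\iu(\om_j-\kbf\cdot\omtbf)h}}\ge\de h/2$, so after dividing by $\de h$ you get $\abs{\al}\ge 1/2$, not $\abs\al\ge\de/2$. Thus $\al^{-m-1}$ is bounded by $2^{m+1}$, uniformly in $\de$; the only thing one needs is the degree bound $\deg(z_{j,p}^\kbf)\le p\le N$ so that the sum in \eqref{eq-polsol} is finite, together with $\abs{\dot v}_\ta\le\deg(v)\abs{v}_\ta$. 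There is no small-divisor compensation needed at all in case (ii) --- that is precisely why the construction splits off case (iii) for the genuinely near-resonant indices. Relatedly, your assertion that each $z_{k,p_i}^{\kbf^i}$ ``carries a hidden $\ep\de^{p_i}$'' reverses the roles: the factors $\ep\de^p$ appear explicitly in the expansion \eqref{eq-mfexp}, and the coefficient functions $z_{j,p}^\kbf$ are by construction $O(1)$ (which is exactly what Lemma~\ref{lemma-size} asserts). The power counting in \eqref{eq-modsys-eqmot} is already complete: the prefactor $\ep^{m+m'-1}/\de^{2m+2m'-2}=(\ep/\de^2)^{m+m'-1}\le 1$ because $\ep\le\de^2$ by \eqref{eq-delta}, and the indices $p_1,\dots,q_{m'}$ are constrained so that the right-hand side depends only on coefficients with lower index. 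Your claimed identity $\ep^{m+m'}\de^{\sum p_i+\sum q_i}\cdot\ep^{m+m'-1}\de^{-(2m+2m'-2)}=\ep\de^{p+1}$ is also not correct arithmetically (for $m+m'=2$ it gives $\ep^3\de^{p-3}$). So the ``only genuinely delicate point'' you flag is spurious; the actual proof requires no $\de$-cancellation in case (ii), only the uniform lower bound on $\al$ and the bounded polynomial degree.
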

\begin{proof}
This follows from the recursive construction \eqref{eq-construction}: The property
\[
\abs{vw}_{\ta} \le \abs{v}_{\ta} \abs{w}_{\ta}
\]
together with Lemma~\ref{lemma-nonlinearity} yields inductively an estimate of the nonlinearity on the right-hand side of \eqref{eq-modsys-eqmot} in the norm $\normv{\cdot}_{s,\ta}$ (note that terms in the nonlinearity with $2(m+m')>p+3$ vanish, and hence the sum over $m$ and $m'$ is finite). Then, the property $\abs{\dot{v}}_{\ta} \le \deg(v) \abs{v}_{\ta}$ allows us to estimate the norm $\normv{\cdot}_{s,\ta}$ for the vector consisting of modulation coefficient functions constructed with \eqref{eq-construction-offdiag}. For the remaining nonzero modulation coefficient functions constructed with \eqref{eq-construction-diag-init}--\eqref{eq-construction-diag-der}, the estimate in the norm $\normv{\cdot}_{s,\ta}$ then follows using the smallness of the initial value \eqref{eq-smallxi0} and the property $\abs{v}_{\ta}\le \abs{v(0)} + \sup_{0\le\widetilde{\ta}\le\ta} \abs{\dot{v}}_{\widetilde{\ta}}$. 

For the estimate of the rescaling $\Gabf^{s-\widehat{s}}\zbf_p$ in the norm $\normv{\cdot}_{\widehat{s},\ta}$ we can use essentially the same argument: We just have to take into account that 
\begin{equation}\label{eq-Ga-prod}
\Ga^{\kbf^1+\kbf^2} \le \Ga^{\kbf^1} \Ga^{\kbf^2}
\end{equation}
and that $\Ga^{\jvec} = 2\abs{j}$. The latter follows from
\[
\abs{j} = \abs{j(\widetilde{\kbf})} \le \sum_{l\in\ind} \abs{\widetilde{k}_l} \abs{l} \le \norm{\widetilde{\kbf}} \prod_{l\in\ind}\abs{l}^{\abs{\widetilde{k}_l}}
\]
for $\widetilde{\kbf}\in[\jvec]$ and $j\in\ind$. 
\end{proof}

\subsection{Defect and error}\label{subsec-defect}

The modulation functions constructed in \eqref{eq-construction} via their modulation coefficient functions \eqref{eq-mfexp} are supposed to fulfill the modulation system~\eqref{eq-modsystem}. However, there are two sources of error in their construction: First, we stopped the construction of modulation coefficient functions $z^{\kbf}_{j,p}$ after $p=N$ \eqref{eq-cutoff}. Second, the modulation functions for near-resonant indices $(j,\kbf)$ were set to zero \eqref{eq-nearres}. In other words, the constructed modulation functions satisfy the equations of motion~\eqref{eq-modsystem-motion} of the modulation system only up to a defect,
\begin{equation}\label{eq-defect}\begin{split}
d^{\kbf}_j + e^{\kbf}_j &= - z_j^\kbf(\cdot +\de h) \e^{-\iu (\kbf\cdot\omtbf) h} + \e^{-\iu \om_j h} z_j^\kbf + \sum_{m+m'=2}^{\infty} \sum_{\substack{\kbf^1+\dots+\kbf^m\\ - \lbf^1-\dots-\lbf^{m'}=[\kbf]}}\\
&\qquad\qquad\qquad\qquad\qquad\qquad \sum_{k\in\ind^m,\, l\in\ind^{m'}} h Q_{j,k,l} z_{k^1}^{\kbf^1} \dotsm z_{k^m}^{\kbf^m} \overline{z}_{l^1}^{\lbf^1} \dotsm \overline{z}_{l^{m'}}^{\lbf^{m'}},
\end{split}\end{equation}
whereas the initial condition~\eqref{eq-modsystem-init} is met exactly.
Here, $\dbf = (d^{\kbf}_j)_{j\in\ind,\kbf\in\Z^{\ind}}$ denotes the defect from the cut-off~\eqref{eq-cutoff}, i.e.,
\begin{equation}\label{eq-defect-cutoff}
d^{\kbf}_j = \sum_{p=N+1}^{\infty} \ep \de^{p} h F^{\kbf}_{j,p} \e^{-\iu(\kbf\cdot\omtbf)h},
\end{equation}
where $F^{\kbf}_{j,p}$ is the right-hand side of \eqref{eq-modsys-eqmot}.
The defect in near-resonant indices that is not yet covered by $\dbf$ is denoted by $\ebf = (e^{\kbf}_j)_{j\in\ind,\kbf\in\Z^{\ind}}$, i.e., $e^{\kbf}_j$ is different from zero only for near-resonant indices and in this case
\begin{equation}\label{eq-defect-nearres}
e^{\kbf}_j = \sum_{p=1}^{N} \ep \de^p h F^{\kbf}_{j,p} \e^{-\iu(\kbf\cdot\omtbf)h}.
\end{equation}
(Recall that $F^{\kbf}_{j,p}=0$ for $p=0$.) Both defects are estimated in the following lemma.

\begin{lemma}\label{lemma-defect}
The defects \eqref{eq-defect}--\eqref{eq-defect-nearres} satisfy on $0\le\ta\le 1$ for $\de\le\de_0$ and $s\ge s_0$
\begin{gather*}
\normv{ \dbf }_{s,\ta} \le C \ep \de^{N+1} h, \qquad \normv{\ebf}_{s,\ta} \le C \ep \de^{N+1} h,\\
\normv{ \Gabf^{s-\widehat{s}}\dbf }_{\widehat{s},\ta} \le C \ep \de^{N+1} h, \qquad \normv{\Gabf^{s-\widehat{s}}\ebf}_{\widehat{s},\ta} \le C \ep \de h
\end{gather*}
with constants $C$, $s_0$ and $\de_0$.
\end{lemma}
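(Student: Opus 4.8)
The plan is to estimate the two defects separately, using the recursive construction of Subsect.~\ref{subsec-constr} together with the size bounds of Lemma~\ref{lemma-size} and the non-resonance condition of Assumption~\ref{assum-nonres}. For the cut-off defect $\dbf$, observe that $F^{\kbf}_{j,p}$ is the right-hand side of \eqref{eq-modsys-eqmot}, a finite sum of products of modulation coefficient functions $z^{\lbf}_{k,q}$ with $q<p$ (and derivatives thereof), together with the nonlinearity coefficients $Q_{j,k,l}$. Lemma~\ref{lemma-nonlinearity} controls the nonlinearity, and the argument from the proof of Lemma~\ref{lemma-size} — the submultiplicativity $\abs{vw}_{\ta}\le\abs{v}_{\ta}\abs{w}_{\ta}$ and $\abs{\dot v}_{\ta}\le\deg(v)\abs{v}_{\ta}$, plus \eqref{eq-Ga-prod} and $\Ga^{\jvec}=2\abs j$ for the rescaled version — yields $\normv{(F^{\kbf}_{j,p})}_{s,\ta}\le C$ and $\normv{\Gabf^{s-\widehat s}(F^{\kbf}_{j,p})}_{\widehat s,\ta}\le C$ for each fixed $p$, uniformly in $p$, on $0\le\ta\le 1$ for $\de\le\de_0$. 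Summing the geometric-type series $\sum_{p\ge N+1}\ep\de^p h F^{\kbf}_{j,p}$ in \eqref{eq-defect-cutoff} then gives both estimates for $\dbf$, the factor $\de^{N+1}$ coming from the lowest surviving power of $\de$.

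For the near-resonant defect $\ebf$, the key point is that $e^{\kbf}_j$ from \eqref{eq-defect-nearres} is nonzero only at near-resonant indices $(j,\kbf)$, i.e.\ those with $\abs{1-\e^{-\iu(\om_j-\kbf\cdot\omtbf)h}}<\de h/2$ that are not of the form $(j,\jvec)$ and not killed by step (i) of the construction (so $j=j(\kbf)$ and $\norm{\widetilde\kbf}\le p$ for some $\widetilde\kbf\in[\kbf]$). The crude bound $\normv{\ebf}_{s,\ta}\le C\ep\de h$ follows exactly as for $\dbf$ but keeping the $p=1$ term; the issue is to gain the extra factor $\de^{N}$ in the $\normv{\cdot}_{s,\ta}$-norm while settling for only $\normv{\Gabf^{s-\widehat s}\ebf}_{\widehat s,\ta}\le C\ep\de h$ in the rescaled norm. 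The first step is to write $\abs{1-\e^{-\iu(\om_j-\kbf\cdot\omtbf)h}}/h = \de_*$, say, so that near-resonance means $\de_*<\de/2\le\de_0/2$; Assumption~\ref{assum-nonres}(a) lets us pass from $\om_j$ to $\omt_j$ at the cost of $\widehat\ep\le\ep_0\le\de_0^2$, and then Assumption~\ref{assum-nonres}(b), applied with the vector $\kbf-\jvec$ (whose $\ell_1$-norm is $\le N+1$ on the support of $\ebf$ by the degree restriction and the cut-off $p\le N$), gives
\[
\frac{\abs{j}^4}{\prod_{l\in\ind}\abs{l}^{2\abs{(\kbf-\jvec)_l}}} \le c_2\,\de_*^{N/s_2}.
\]
Rearranging, $\abs{j}^{2}\big/\Ga^{\kbf}\lesssim \de_*^{N/(2s_2)}$ up to harmless constants (here $\Ga^{\kbf}\ge\prod\abs l^{\abs{\widetilde k_l}}$ for $\widetilde\kbf\in[\kbf]$ realizing the minimum, and the mismatch between $\prod\abs l^{\abs{(\kbf-\jvec)_l}}$ and $\prod\abs l^{\abs{\widetilde k_l}}$ is absorbed into the $\Ga$-factors via \eqref{eq-Ga-prod}). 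Hence on near-resonant indices $\abs{j}^{2s}\,\cdot\,\text{(size of }z^{\kbf}_j)$ carries a surplus power of $\de_*$, and since $\de_*\le\de$, iterating this over the factors of $s_2$ contained in $s-\widehat s$ produces the full $\de^{N}$: choosing $s_0$ so that $s\ge s_0$ forces $s-\widehat s \ge 2 N s_2$ (this is where the dependence $s_0=s_0(d,s_2)$ enters) converts the $\normv{\Gabf^{s-\widehat s}\cdot}_{\widehat s,\ta}$-estimate $\le C\ep\de h$ into the $\normv{\cdot}_{s,\ta}$-estimate $\le C\ep\de^{N+1}h$.

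The main obstacle is precisely this last bookkeeping: tracking how the per-index gain $\abs{j}^2/\Ga^{\kbf}\lesssim\de^{N/(2s_2)}$ supplied by Assumption~\ref{assum-nonres}(b) interacts with the weight $\abs j^{2s}$ and the rescaling weight $(\Ga^{\kbf})^{2(s-\widehat s)}$ in the two norms $\normv{\cdot}_{s,\ta}$ and $\normv{\Gabf^{s-\widehat s}\cdot}_{\widehat s,\ta}$, so that summing over all (infinitely many) near-resonant $(j,\kbf)$ still converges and leaves the advertised power of $\de$. One must be careful that the gain is available \emph{only} at indices where $k_l\ne 0$ forces $k_{l'}=0$ for $\omt_{l'}=\omt_l$, $l'\ne l$ — which is exactly the admissibility hypothesis in Assumption~\ref{assum-nonres}(b); but this is automatic here because the resonance module $\res$ has already been quotiented out in the modulated Fourier ansatz \eqref{eq-mfe}, so the representative $\widetilde\kbf\in[\kbf]$ used in $\Ga^{\kbf}$ can be taken with this property. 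Once the index-wise inequality is in hand, the summation is a routine application of Cauchy–Schwarz in the style of Lemma~\ref{lemma-nonlinearity}, and the geometric series in $p$ closes as before; the remaining estimates ($\dbf$ in both norms, $\ebf$ in the rescaled norm) require no resonance input and follow directly from Lemma~\ref{lemma-size}.
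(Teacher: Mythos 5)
Your proposal follows essentially the same route as the paper: for $\dbf$ you sum the tail of the geometric-type series in $p$ using Lemma~\ref{lemma-nonlinearity} and the bounds of Lemma~\ref{lemma-size} (the paper is a touch more careful, splitting the $m+m'$-sum at $N+3$ so that the second part of Lemma~\ref{lemma-nonlinearity} controls the infinite tail for small $\de$); for $\ebf$ you pass from $\om$ to $\omt$ via Assumption~\ref{assum-nonres}(a), invoke Assumption~\ref{assum-nonres}(b) with $\kbf-\jvec$, and use the resulting inequality to trade $\abs{j}^{s-\widehat s}$ for $\de^{N}(\Ga^{\kbf})^{s-\widehat s}$, turning the rescaled bound into the unrescaled one. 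This is exactly the paper's mechanism, so the structure is right. One concrete bookkeeping error, though: you ask for $s-\widehat s\ge 2Ns_2$ by ``iterating the gain $N$ times,'' which would make $s_0$ depend on $N$. In fact no iteration is needed: Assumption~\ref{assum-nonres}(b) already delivers the full power $\de^{N/s_2}$ in a single application, so raising $\abs{j}^{2}\lesssim\de^{N/(2s_2)}\Ga^{\kbf}$ to the power $(s-\widehat s)/2$ yields $\de^{N(s-\widehat s)/(4s_2)}\le\de^{N}$ as soon as $s-\widehat s$ is a fixed multiple of $s_2$ (the paper uses $s-\widehat s\ge 2s_2$), independent of $N$ — consistent with the paper's remark that $s_0$ depends only on $d$ and $s_2$. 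Finally, be a little more cautious with the parenthetical claim that the admissibility hypothesis of Assumption~\ref{assum-nonres}(b) (``$k_l\ne 0$ only if $k_{l'}=0$ for $\omt_{l'}=\omt_l$, $l'\ne l$'') is ``automatic because $\res$ has been quotiented out'': for $l\ne l'$ with $\omt_l=\omt_{l'}$ one has $(\skla{l}-\skla{l'})\cdot\omtbf=0$ but generally $j(\skla{l}-\skla{l'})\ne 0$, so $\skla{l}-\skla{l'}\notin\res$ and you cannot simply change representative within $[\kbf]$ to enforce admissibility; this step deserves a separate reduction (collapsing entries with equal $\omt$-value into one, which changes $j(\cdot)$ but preserves the small quantity fed to the non-resonance condition), which the paper also leaves implicit.
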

\begin{proof}
(a) For the bound of $\dbf$, we note that by Lemmas~\ref{lemma-nonlinearity} and~\ref{lemma-size}
\[
\normv{\dbf}_{s,\ta} \le C\ep\de^{N+1}h + \ep h \sum_{m+m'=2}^{\infty} C_{m,m',s} C^{m+m'} \sum_{p=\max(N+1,2(m+m')-3)}^{\infty} \de^p,
\]
where $C/(N+1)$ denotes the constant of Lemma~\ref{lemma-size}. Splitting the sum over $m$ and $m'$ in a part with $m+m'\le N+3$ and another part with $m+m'\ge N+4$ proves the claimed estimate of $\dbf$ using the second part of Lemma~\ref{lemma-nonlinearity} for the sum with $m+m'\ge N+4$ and sufficiently small $\de$. The estimates of $\Gabf^{s-\widehat{s}}\dbf$ and $\Gabf^{s-\widehat{s}}\ebf$ follow similarly.

(b) Concerning the defect $\ebf$ in near-resonant indices, we note that for those indices $(j,\kbf)$
\begin{align*}
\absbig{\e^{-\iu(\omt_j-\kbf\cdot\omtbf)h} - 1} &\le \absbig{\e^{-\iu\omt_jh} - \e^{-\iu \om_j h}} + \absbig{\e^{-\iu(\om_j-\kbf\cdot\omtbf)h} - 1}\\
 &< 2\absbig{\sin((\omt_j-\om_j)h/2)} + \frac{\de h}{2} \le \widehat{\ep} h  + \frac{\de h}{2} \le \de h
\end{align*}
by Assumption~\ref{assum-nonres} and for $2 \de\le 1$. The non-resonance condition of Assumption~\ref{assum-nonres} (used with $\kbf-\langle j \rangle$ in place of $\kbf$) thus implies 
\[
\abs{j}^{s-\widehat{s}} \le c_2^{(s-\widehat{s})/2} \de^{N} \bigl(\Ga^{\kbf} \bigr)^{s-\widehat{s}}
\]
for $s- \widehat{s}\ge 2s_2$. This shows that
\[
\normv{\ebf}_{s,\ta} \le c_2^{(s-\widehat{s})/2} \de^N \normv{\Gabf^{s-\widehat{s}}\ebf}_{\widehat{s},\ta},
\]
and the claimed estimate of $\ebf$ follows from Lemma~\ref{lemma-size}.
\end{proof}

Now, we study the difference $\xi^n - \xi(t_n)$ of the numerical solution $\xi^n$ of \eqref{eq-split-xi} and its modulated Fourier expansion $\xi(t)$ of \eqref{eq-mfe}. In this modulated Fourier expansion $\xi(t)$ of \eqref{eq-mfe} we use the modulation functions constructed in \eqref{eq-construction} at discrete times $t_n = nh$.

\begin{proposition}\label{prop-error}
We have for $\de\le\de_0$ and $s\ge s_0$
\[
\norm{ \xi^n - \xi(t_n) }_s \le C \ep \de^{N} \myfor 0\le t_n = nh \le \de^{-1}
\]
with constants $C$, $s_0$ and $\de_0$.
\end{proposition}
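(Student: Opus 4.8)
The plan is to bound the difference $\delta^n := \xi^n - \xi(t_n)$ by a standard Lady Windermere's fan / discrete Gronwall argument, using the defect estimates of Lemma~\ref{lemma-defect} as the local error and Lemma~\ref{lemma-nonlinearity} to control the propagation of the error through the nonlinearity. First I would note that the modulated Fourier expansion $\xi(t)$, by construction via the modulation equations~\eqref{eq-modsystem} and the defect decomposition~\eqref{eq-defect}, satisfies the numerical scheme~\eqref{eq-split-xi} exactly except for the defect $d^{\kbf}_j + e^{\kbf}_j$ summed over residue classes; that is, setting $\xi(t_n) = \sum_{\kbf} z_j^{\kbf}(\delta t_n)\e^{-\iu(\kbf\cdot\omtbf)t_n}$, one has
\[
\xi_j(t_{n+1}) = \e^{-\iu\om_j h}\xi_j(t_n) + \bigl(\text{nonlinear terms evaluated at }\xi(t_n)\bigr) + R_j^n,
\]
where the remainder $R^n$ collects $\sum_{\kbf}(d^{\kbf}_j + e^{\kbf}_j)\e^{-\iu(\kbf\cdot\omtbf)t_n}$ and is bounded, by Lemma~\ref{lemma-defect} and $|\sum_{\kbf} v^{\kbf}_j| \le \sum_{\kbf}|v^{\kbf}_j|_\tau$ at $\tau = \delta t_n \le 1$, by $\norm{R^n}_s \le C\ep\delta^{N+1}h$.

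Next I would subtract this from~\eqref{eq-split-xi} to get a recursion for $\delta^n$. Since the linear part $\e^{-\iu\om_j h}$ is an isometry on $H^s$ (Assumption~\ref{assum-linearstability} guarantees $|\e^{-\iu\om_j h}| = 1$), we obtain
\[
\norm{\delta^{n+1}}_s \le \norm{\delta^n}_s + h\,L\bigl(\norm{\xi^n}_s, \norm{\xi(t_n)}_s\bigr)\norm{\delta^n}_s + C\ep\delta^{N+1}h,
\]
where the Lipschitz-type factor $L$ comes from the difference of the nonlinear terms: writing each monomial difference $\xi_{k^1}^n\dotsm - z\dotsm z$ telescopically and applying the multilinear estimate of Lemma~\ref{lemma-nonlinearity} together with the bound $\norm{\xi(t_n)}_s \le C\ep$ (which follows from Lemma~\ref{lemma-size}, since $\norm{\xi(t)}_s \le \normv{\zbf}_{s,\tau}$ summed over $p$ with the $\ep\delta^p$ weights, and the geometric series converges for $\delta \le \delta_0$), one gets $L \le C\ep$ as long as $\norm{\xi^n}_s$ stays, say, $\le 2C\ep$. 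Then discrete Gronwall gives
\[
\norm{\delta^n}_s \le C\ep\delta^{N+1}h \cdot \frac{(1+C\ep h)^n - 1}{C\ep h} \le C\ep\delta^{N+1}\cdot nh \cdot \e^{C\ep nh}.
\]
For $nh \le \delta^{-1}$ and using $\ep \le \delta^2$ (from~\eqref{eq-delta}), the factor $\e^{C\ep nh} \le \e^{C\delta^2/\delta} = \e^{C\delta}$ is bounded, and $nh \le \delta^{-1}$ kills one power of $\delta$, yielding $\norm{\delta^n}_s \le C\ep\delta^N$ as claimed.

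The one subtlety — and the place where care is needed — is the bootstrap: the Lipschitz estimate $L \le C\ep$ requires $\norm{\xi^n}_s$ to stay bounded by a constant times $\ep$, but a priori we only know this for $\xi(t_n)$, not for the true numerical solution $\xi^n$. I would handle this by a standard induction on $n$: assuming $\norm{\delta^m}_s \le C\ep\delta^N$ (hence $\norm{\xi^m}_s \le \norm{\xi(t_m)}_s + C\ep\delta^N \le 2C\ep$) for all $m \le n$, the Gronwall step above produces $\norm{\delta^{n+1}}_s \le C\ep\delta^N$ with a constant that can be kept below the induction threshold by choosing $\delta_0$ small, closing the loop. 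The initial step $n=0$ holds since $\delta^0 = 0$ by~\eqref{eq-modsystem-init}. The main obstacle is thus bookkeeping rather than conceptual: one must verify that the constants in the Gronwall iteration do not degrade across the $\lfloor\delta^{-1}/h\rfloor$ steps, which is exactly why the exponent $\delta^{N+1}$ in the local defect (with one spare power of $\delta$ beyond the target $\delta^N$) and the time restriction $nh \le \delta^{-1}$ are both needed.
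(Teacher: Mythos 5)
The proposal is correct and follows essentially the same Lady Windermere / discrete Gronwall strategy as the paper's proof: rewrite the modulated Fourier expansion as satisfying the scheme~\eqref{eq-split-xi} up to the defect from Lemma~\ref{lemma-defect}, subtract, use the multilinear bound of Lemma~\ref{lemma-nonlinearity} to get a Lipschitz factor of order $\ep$, and iterate over $nh\le\de^{-1}$ steps so that the local error $C\ep\de^{N+1}h$ accumulates to $C\ep\de^{N}$. The one small variation is that you obtain the needed intermediate bound $\norm{\xi^n}_s\le C\ep$ by a bootstrap coupled to the error recursion, whereas the paper's part~(b) establishes this a priori (directly from the scheme, the isometry of the linear part, and the quadratic-onset nonlinearity) on the even longer interval $t_n\le\ep^{-1/2}$ before running the Gronwall argument; both routes are sound and lead to the same estimate.
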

\begin{proof}
(a) From Lemma~\ref{lemma-size} we know for the modulated Fourier expansion the estimate 
\[
\norm{\xi(t_n)}_s \le C\ep \myfor 0\le t_n \le \de^{-1}.
\]

(b) A corresponding estimate holds, for sufficiently small $\ep$, also for the numerical solution:
\[
\norm{\xi^n}_s \le 2\widehat{c}^{-1}\ep \myfor 0\le t_n \le \ep^{-1/2}
\]
with $\widehat{c}$ from \eqref{eq-smallxi0}, since by Lemma~\ref{lemma-nonlinearity} and induction 
\[
\norm{\xi^{n+1}}_s \le \norm{\xi^0}_s + h \sum_{n'=0}^n \sum_{m+m'=2}^{\infty} C_{m,m',s} \norm{\xi^{n'}}_s^{m+m'} \le \widehat{c}^{-1}\ep + \frac{4C}{\widehat{c}^2 r^2}\, nh \ep^2
\]
for $2\ep \le r\widehat{c}$ with $C$ and $r$ from Lemma~\ref{lemma-nonlinearity}. We may assume without loss of generality that the constant $C$ from (a) is larger than $2/\widehat{c}$.

(c) The modulated Fourier expansion satisfies by \eqref{eq-defect} 
\begin{align*}
\xi_j(t_{n+1}) &= \e^{-\iu \om_j h} \xi_j(t_n) - \sum_{\kbf} \bigl(d_j^\kbf(\de t_n) + e_j^\kbf(\de t_n)\bigr)\e^{-\iu (\kbf\cdot\omtbf) t} + \sum_{m+m'=2}^{\infty}\\
&\qquad\qquad \sum_{k\in\ind^m,\,l\in\ind^{m'}} h Q_{j,k,l} \xi_{k^1}(t_n)\dotsm \xi_{k^m}(t_n) \overline{\xi}_{l^1}(t_n)\dotsm \overline{\xi}_{l^{m'}}(t_n).
\end{align*}
Subtracting the numerical solution $\xi^{n+1}$ of \eqref{eq-split-xi} and using (a), (b), Lemma~\ref{lemma-nonlinearity} and Lemma~\ref{lemma-defect} shows that for $0\le t_n \le \de^{-1}$ and for sufficiently small $\ep$
\[
\norm{ \xi^{n+1} - \xi(t_{n+1}) }_s \le \norm{ \xi^n - \xi(t_n) }_s + C \ep \de^{N+1} h + C \ep h \norm{ \xi^n - \xi(t_n) }_s.
\]
The claimed estimate follows inductively.
\end{proof}

\subsection{The splitting structure of the modulated Fourier expansion}\label{subsec-splitstruc-mfe}

In the previous subsections, a modulated Fourier expansion was constructed and analysed based on the representation \eqref{eq-split-xi} of the numerical scheme, i.e., based on flows of differential equations. Recall that we have derived in Subsect.~\ref{subsec-splitstruc-exact} differential equations (Hamiltonian functions) that underly the flows that compose the numerical scheme. In this subsection, we will derive corresponding differential equations for the modulated Fourier expansion.

Motivated by \eqref{eq-split-diffeq}, we denote by $\Ph^h_{\Hbf_0}$ the flow at time $h$ of the Hamiltonian differential equation
\[
\iu \dot z_j^\kbf = \frac{\partial \Hbf_0}{\partial \overline{z}_j^\kbf} (\zbf,\overline{\zbf})
\]
with Hamiltonian function
\[
\Hbf_0(\zbf,\overline{\zbf}) = \sum_{\kbf} \sum_{j\in\ind} \bigl(\abs{\ell+j\bmod{2K}}^2 - \abs{\ell}^2\bigr) \abs{w_j^\kbf}^2, \qquad 
\begin{pmatrix}
w_j^\kbf\\
\overline{w}_{-j}^{-\kbf}
\end{pmatrix} = S_j^{-1}
\begin{pmatrix}
z_j^\kbf\\
\overline{z}_{-j}^{-\kbf}
\end{pmatrix}.
\]
Correspondingly, we denote by $\Ph^h_{\Pbf}$ the flow at time $h$ of the Hamiltonian differential equation with Hamiltonian function 
\[
\Pbf(\zbf,\overline{\zbf}) = \sum_{m+m'=0}^\infty \sum_{\substack{\kbf^1+\dots+\kbf^m\\-\lbf^1-\dots-\lbf^{m'} \in\res}} \sum_{k\in\ind^m,\,l\in\ind^{m'}} P_{k,l} z_{k^1}^{\kbf^1}\dotsm z_{k^m}^{\kbf^m}\overline{z}_{l^1}^{\lbf^1}\dotsm\overline{z}_{l^{m'}}^{\lbf^{m'}},
\]
compare \eqref{eq-P}. 

The splitting structure of the modulation system for the modulation functions $\zbf$ is revealed in the following lemma: advancing the modulation functions by $\de h$ corresponds, up to a small defect, to solving Hamiltonian differential equations with Hamiltonian functions $\Hbf_0$ and $\Pbf$ one after another.

\begin{lemma}\label{lemma-splittingmfe}
We have
\[
\Ph^h_{\Hbf_0} \circ \Ph^h_{\Pbf} ( \zbf(\de t_n) ) = \widetilde{\zbf}(\de t_{n+1}) + \dbf(\de t_{n}) + \ebf(\de t_{n})
\]
with the defects $\dbf$ and $\ebf$ of \eqref{eq-defect}--\eqref{eq-defect-nearres} and where $\widetilde{z}_j^\kbf(\de t_{n+1}) = z_j^\kbf(\de t_{n+1}) \e^{-\iu(\kbf\cdot\omtbf)h}$.
\end{lemma}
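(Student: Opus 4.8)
\textbf{Plan for the proof of Lemma~\ref{lemma-splittingmfe}.}
The idea is to unwind the two flows $\Ph_\Pbf^h$ and $\Ph_{\Hbf_0}^h$ explicitly on the modulation functions and to compare the result with the modulation equations~\eqref{eq-modsystem-motion}, the defect $\dbf+\ebf$ being precisely the discrepancy~\eqref{eq-defect}. First I would compute $\Ph_\Pbf^h(\zbf(\de t_n))$. Since $\Pbf$ is built from the same coefficients $P_{k,l}$ as $P$ in~\eqref{eq-P}, and since the modulated Fourier ansatz~\eqref{eq-mfe} was designed so that the product $z_{k^1}^{\kbf^1}\dotsm\overline{z}_{l^{m'}}^{\lbf^{m'}}$ contributes to the $[\kbf]$-component exactly when $\kbf^1+\dots+\kbf^m-\lbf^1-\dots-\lbf^{m'}\in[\kbf]$ (this is the role of the constraint $\in\res$ in the definition of $\Pbf$), the flow $\Ph_\Pbf^h$ acts on $\zbf$ in the same way that $\Ph_P^h$ acts on $\xi$ via~\eqref{eq-split-xi}. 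Concretely, I would invoke the explicit expansion~\eqref{eq-split-xi} together with the splitting identity $\Ph_P^h(\xi^n)=(\Ph_{H_0}^h)^{-1}(\xi^{n+1})$ of~\eqref{eq-splittingstructure} to write, component by component, $(\Ph_\Pbf^h(\zbf(\de t_n)))_j^\kbf$ as the right-hand side of~\eqref{eq-split-xi} with $\xi_{k}^n$ replaced by $z_k^{\kbf^i}(\de t_n)$ and the mode-summation replaced by the $\res$-summation; one then applies $\Ph_{\Hbf_0}^h$, which by construction multiplies the $j$-component by $\e^{-\iu\om_j h}$ after the $S_j$-diagonalization (this is exactly the content of Subsect.~\ref{subsec-splitstruc-exact} transported to the $\zbf$-variables).

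The second step is bookkeeping on the left-hand side versus~\eqref{eq-defect}. The constructed modulation functions satisfy the modulation system~\eqref{eq-modsystem-motion} only up to the defect $\dbf+\ebf$ by the very definition~\eqref{eq-defect}; rearranging~\eqref{eq-defect} gives
\[
z_j^\kbf(\cdot+\de h)\e^{-\iu(\kbf\cdot\omtbf)h} = \e^{-\iu\om_j h}z_j^\kbf + (\text{nonlinear terms}) - d_j^\kbf - e_j^\kbf,
\]
and the left-hand side is precisely $\widetilde z_j^\kbf(\de t_{n+1})$ evaluated at $\ta=\de t_n$ (using $\ta+\de h=\de t_{n+1}$). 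So the task reduces to matching "$\e^{-\iu\om_j h}z_j^\kbf+(\text{nonlinear terms})$" from~\eqref{eq-defect} with the component expression for $\Ph_{\Hbf_0}^h\circ\Ph_\Pbf^h(\zbf(\de t_n))$ obtained in the first step. The linear part matches because $\Ph_{\Hbf_0}^h$ produces the factor $\e^{-\iu\om_j h}$; the nonlinear part matches because the two Hamiltonians $\Pbf$ and the nonlinearity in~\eqref{eq-modsystem-motion} are assembled from the same coefficients $Q_{j,k,l}$ (equivalently $P_{k,l}$) with the identical index constraint $\kbf^1+\dots-\lbf^{m'}\in[\kbf]$. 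I would spell out the correspondence between the $Q_{j,k,l}$-form of~\eqref{eq-split-xi} and the Hamiltonian $P$-form, which is already recorded in~\eqref{eq-P} and~\eqref{eq-splittingstructure}; nothing new is needed beyond transcribing it with $\zbf$ in place of $\xi$ and $\res$-summation in place of mod-$2K$-summation.

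\textbf{Main obstacle.} The delicate point is not any estimate but the verification that the index constraints genuinely coincide, i.e., that summing over $\kbf^1+\dots+\kbf^m-\lbf^1-\dots-\lbf^{m'}\in[\kbf]$ in the definition of $\Pbf$ reproduces, after the Hamiltonian derivative $\partial/\partial\overline z_j^\kbf$ and the diagonalization by $S_j$, exactly the term structure of~\eqref{eq-modsystem-motion} — including the correct placement of complex conjugates ($z$ versus $\overline z$, $\kbf$ versus $-\kbf$) coming from the $S_j^{-1}$-substitution $\bigl(\begin{smallmatrix}w_j^\kbf\\ \overline w_{-j}^{-\kbf}\end{smallmatrix}\bigr)=S_j^{-1}\bigl(\begin{smallmatrix}z_j^\kbf\\ \overline z_{-j}^{-\kbf}\end{smallmatrix}\bigr)$ in the definition of $\Hbf_0$. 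One must check that the pairing $(\kbf,j)\leftrightarrow(-\kbf,-j)$ is respected by both $\Hbf_0$ and $\Pbf$ so that the flows are well-defined on the modulation variables and the defect identity closes exactly (no leftover terms). Once this symmetry bookkeeping is done, the lemma follows by directly reading off~\eqref{eq-defect}; I expect the argument to be essentially a careful transcription rather than a computation, paralleling the passage from~\eqref{eq-split-u} to~\eqref{eq-split-diffeq} carried out in Subsect.~\ref{subsec-splitstruc-exact}.
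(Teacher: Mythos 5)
Your proposal follows the same route as the paper's proof: recognize that $\Ph^h_{\Pbf}$ is built from the same coefficients $P_{j,k,l}$ as $\Ph^h_{P}$ (with the mod-$2K$ summation over modes replaced by the $\res$-restricted summation over $\kbf$-indices), so that the expansion coefficients of $\Ph^h_{\Hbf_0}\circ\Ph^h_{\Pbf}(\zbf)$ coincide with those of $\Ph^h_{H_0}\circ\Ph^h_{P}(\xi)$, i.e.\ with those in \eqref{eq-split-xi}, which are exactly the coefficients appearing in the defining identity \eqref{eq-defect} of the defects. One small slip in the wording: $\Ph^h_{\Hbf_0}$ \emph{alone} does not multiply $z^\kbf_j$ by $\e^{-\iu\om_j h}$; in the $z$-variables its flow is $S_j\,\diag(\e^{-\iu(\abs{\ell+j\bmod 2K}^2-\abs{\ell}^2)h},\,\e^{\iu(\abs{\ell-j\bmod 2K}^2-\abs{\ell}^2)h})\,S_j^{-1}$, which is non-diagonal, and the factor $\e^{-\iu\om_j h}$ is the eigenvalue of the linear part of the \emph{full composition} $\Ph^h_{\Hbf_0}\circ\Ph^h_{\Pbf}$ — but since your comparison is ultimately against that full composition via \eqref{eq-split-xi} and \eqref{eq-defect}, the argument is unaffected.
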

\begin{proof}
Let 
\[
\bigl(\Ph^h_P(\xi)\bigr)_j = \sum_{m+m'=0}^\infty \sum_{k\in\ind^m,\,l\in\ind^{m'}} P_{j,k,l} \xi_{k^1}\dotsm\xi_{k^m} \overline{\xi}_{l^1}\dotsm\overline{\xi}_{l^{m'}}
\]
be the expansion of the flow $\Ph^h_P$ given by \eqref{eq-splittingstructure}. Then one verifies that the flow $\Ph^h_\Pbf$ is given by the same coefficients $P_{j,k,l}$,
\[
\bigl(\Ph^h_{\Pbf}(\zbf)\bigr)_j^\kbf = \sum_{m+m'=0}^\infty \sum_{k\in\ind^m,\,l\in\ind^{m'}} \sum_{\substack{\kbf^1+\dots+\kbf^m\\-\lbf^1-\dots-\lbf^{m'} =[\kbf]}} P_{j,k,l} z_{k^1}^{\kbf^1}\dotsm z_{k^m}^{\kbf^m} \overline{z}_{l^1}^{\lbf^1}\dotsm\overline{z}_{l^{m'}}^{\lbf^{m'}}.
\]
This implies that also the coefficients of the expansions of $\Ph^h_{H_0}\circ\Ph^h_P(\xi)$ and $\Ph^h_{\Hbf_0}\circ\Ph^h_{\Pbf}(\zbf)$ coincide. The coefficients in the expansion of $\Ph^h_{H_0}\circ\Ph^h_P(\xi)$ are given by \eqref{eq-split-xi}, and they also appear in the expansion \eqref{eq-defect} of $\widetilde{\zbf}(\de t_{n+1}) + \dbf(\de t_{n}) + \ebf(\de t_{n})$. The statement of the lemma follows.
\end{proof}

\subsection{Discrete almost-invariants}\label{subsec-invariants}

An essential property of modulated Fourier expansions is the existence of formal invariants. These invariants will finally allow us to consider long time intervals by patching together many of the short time intervals considered so far. They take the form
\begin{equation}\label{eq-Ic}
\Ic_m(\zbf) = \sum_{\kbf} \sum_{j\in\ind} \sum_{\substack{l\in\ind\\ n(l) = m}} k_l \abs{z_j^\kbf}^2 \myfor m\in\Nc := \{\, n(j) : j\in\ind \,\}.
\end{equation}
This is well defined (recall that the $\sum_{\kbf}$ stands for the sum over the equivalence classes $[\kbf]\in\Z^\ind/\res$) since $\sum_{l : n(l) = m} k_l = 0$ for $\kbf\in\res$ by part (c) of Assumption~\ref{assum-nonres}.

\begin{lemma}\label{lemma-invariant}
We have
\[
\Ic_m\bigl(\Ph^h_{\Hbf_0} \circ \Ph^h_{\Pbf} (\zbf(\de t_n) )\bigr) = \Ic_m\bigl(\zbf(\de t_n)\bigr) \myfor m\in\Nc.
\]
\end{lemma}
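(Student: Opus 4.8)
The plan is to recognize $\Ic_m$ as a conserved quantity---in fact the momentum map of a ``gauge rotation'' symmetry---of \emph{both} Hamiltonian flows $\Ph^h_{\Hbf_0}$ and $\Ph^h_{\Pbf}$ separately; since a quantity preserved by each of two maps is preserved by their composition, this gives the lemma. Write $\mu_m(\kbf) := \sum_{l\in\ind,\,n(l)=m}k_l$, so that $\Ic_m(\zbf)=\sum_{\kbf}\sum_{j\in\ind}\mu_m(\kbf)\,\abs{z_j^\kbf}^2$. The map $\kbf\mapsto\mu_m(\kbf)$ is a group homomorphism $\Z^\ind\to\Z$, it is odd, $\mu_m(-\kbf)=-\mu_m(\kbf)$, and it vanishes on $\res$ by part~(c) of Assumption~\ref{assum-nonres} (which is precisely why $\Ic_m$ is well defined on residue classes $[\kbf]$). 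Equivalently, $\Ic_m$ generates, up to the sign of the parameter, the one-parameter group of symplectic transformations $T_\la\colon z_j^\kbf\mapsto\e^{\iu\la\mu_m(\kbf)}z_j^\kbf$, $\la\in\R$.

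The key step is the elementary observation that a Hamiltonian of the form $F(\zbf,\overline{\zbf})=\sum_M c_M\,M$, with each monomial $M=z_{k^1}^{\kbf^1}\dotsm z_{k^m}^{\kbf^m}\overline{z}_{l^1}^{\lbf^1}\dotsm\overline{z}_{l^{m'}}^{\lbf^{m'}}$ satisfying $\mu_m(\kbf^1+\dots+\kbf^m-\lbf^1-\dots-\lbf^{m'})=0$, Poisson-commutes with $\Ic_m$. Indeed, a direct computation gives $\{F,\Ic_m\}=\iu\sum_M c_M\bigl(\sum_i\mu_m(\kbf^i)-\sum_i\mu_m(\lbf^i)\bigr)M=\iu\sum_M c_M\,\mu_m(\kbf^1+\dots+\kbf^m-\lbf^1-\dots-\lbf^{m'})\,M=0$ by linearity of $\mu_m$ (at the level of the group this is just the invariance $F\circ T_\la=F$). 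I would apply this twice. For $\Pbf$: by its very definition the monomials are restricted to $\kbf^1+\dots+\kbf^m-\lbf^1-\dots-\lbf^{m'}\in\res$, on which $\mu_m$ vanishes, so $\{\Pbf,\Ic_m\}=0$. For $\Hbf_0$: using $w_j^\kbf=(S_j^{-1})_{11}z_j^\kbf+(S_j^{-1})_{12}\overline{z}_{-j}^{-\kbf}$ from the definition of $\Hbf_0$, one expands $\abs{w_j^\kbf}^2$ into the four monomials $z_j^\kbf\overline{z}_j^\kbf$, $z_{-j}^{-\kbf}\overline{z}_{-j}^{-\kbf}$, $z_j^\kbf z_{-j}^{-\kbf}$ and $\overline{z}_j^\kbf\overline{z}_{-j}^{-\kbf}$, each of which has vanishing $\mu_m$-charge because $\mu_m(-\kbf)=-\mu_m(\kbf)$; hence $\{\Hbf_0,\Ic_m\}=0$.

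From $\{\Hbf_0,\Ic_m\}=\{\Pbf,\Ic_m\}=0$ the conclusion is immediate: $\frac{\dd}{\dd t}\Ic_m\bigl(\Ph^t_{\Hbf_0}(\zbf)\bigr)=\{\Ic_m,\Hbf_0\}\bigl(\Ph^t_{\Hbf_0}(\zbf)\bigr)=0$ shows $\Ic_m\circ\Ph^h_{\Hbf_0}=\Ic_m$, likewise $\Ic_m\circ\Ph^h_{\Pbf}=\Ic_m$, and composing the two yields $\Ic_m\bigl(\Ph^h_{\Hbf_0}\circ\Ph^h_{\Pbf}(\zbf(\de t_n))\bigr)=\Ic_m\bigl(\zbf(\de t_n)\bigr)$. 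The only points requiring care are the combinatorial bookkeeping that the monomials of $\Hbf_0$ and of $\Pbf$ are indeed $\mu_m$-neutral---for $\Pbf$ this is exactly where Assumption~\ref{assum-nonres}(c) enters, for $\Hbf_0$ it is the oddness of $\mu_m$ together with the pairing $(z_j^\kbf,\overline{z}_{-j}^{-\kbf})$ under $S_j$---and the justification of the Poisson-bracket manipulations for the infinite sums involved, which follows from the smallness and regularity of $\zbf(\de t_n)$ provided by Lemma~\ref{lemma-size} and the bounds on the coefficients in Lemma~\ref{lemma-nonlinearity}, ensuring absolute convergence of all series near the relevant modulation functions.
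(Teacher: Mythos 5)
Your proposal is correct and takes essentially the same route as the paper: both rest on recognizing the one-parameter gauge rotation $z_j^\kbf\mapsto\e^{\iu\theta\mu_m(\kbf)}z_j^\kbf$ as a symmetry of $\Hbf_0$ and $\Pbf$ separately, and concluding conservation of the associated quantity $\Ic_m$ along each flow (the paper phrases this via Noether's theorem with a derivative at $\theta=0$, you phrase it equivalently as $\{\Hbf_0,\Ic_m\}=\{\Pbf,\Ic_m\}=0$). If anything, you are slightly more explicit than the paper's terse ``in the same way'' for $\Hbf_0$, spelling out that the quadratic monomials $z_j^\kbf\overline{z}_j^\kbf$, $z_{-j}^{-\kbf}\overline{z}_{-j}^{-\kbf}$, $z_j^\kbf z_{-j}^{-\kbf}$, $\overline{z}_j^\kbf\overline{z}_{-j}^{-\kbf}$ are $\mu_m$-neutral thanks to the oddness $\mu_m(-\kbf)=-\mu_m(\kbf)$.
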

\begin{proof}
Let $\Sbf(\theta)$ be defined by
\[
\bigl(\Sbf(\theta)\zbf\bigr)_j^\kbf = \e^{\iu \theta \sum_{l : n(l) = m} k_l} z_j^\kbf
\]
for $m\in\Nc$. The Hamiltonian function $\Pbf$ from the previous subsection is invariant under the transformations $\Sbf(\theta)$, and this leads to conserved quantities by Noether's theorem: We have along a solution $\zbf=\Ph^t_{\Pbf}\zbf^0$ of the Hamiltonian differential equation with Hamiltonian function $\Pbf$
\begin{align*}
0 = \left.\frac{\dd}{\dd\theta}\right|_{\theta=0} \!\!\!\! \Pbf\bigl(\Sbf(\theta)\zbf,\overline{\Sbf(\theta)\zbf}\bigr) &= -\iu \sum_{\kbf} \sum_{j\in\ind} \sum_{\substack{l\in\ind\\ n(l) = m}} k_l \biggl(\overline{z}_{j}^{\kbf} \frac{\partial \Pbf}{\partial \overline{z}_j^\kbf} (\zbf,\overline{\zbf}) - z_{j}^{\kbf} \frac{\partial \Pbf}{\partial z_j^\kbf} (\zbf,\overline{\zbf})\biggr)\\
 &= \sum_{\kbf} \sum_{j\in\ind} \sum_{\substack{l\in\ind\\ n(l) = m}} k_l \frac{\dd}{\dd t} \abs{z_j^\kbf}^2 = \frac{\dd}{\dd t} \Ic_m(\zbf).
\end{align*}
This implies conservation of $\Ic_m$ along the flow of $\Pbf$,
\[
\Ic_m\bigl(\Ph^h_{\Pbf} (\zbf(\de t_n) ) \bigr) = \Ic_m\bigl(\zbf(\de t_n)\bigr).
\]
In the same way, one shows conservation of $\Ic_m$ along the flow of $\Hbf_0$, and the statement of the lemma follows.
\end{proof}

In the end, we are interested more in $\zbf(\de t_{n+1})$ than in $\Ph^h_{\Hbf_0} \circ \Ph^h_{\Pbf} (\zbf(\de t_n))$. The following lemma shows that $\Ic_m$ is an almost-invariant along the modulation functions~$\zbf$. 

\begin{proposition}\label{prop-almostinvariants}
We have for $\de\le\de_0$ and $s\ge s_0$
\[
\sum_{m\in\Nc} \max(1,m)^{s} \absbig{\Ic_m(\zbf(\de t_n)) - \Ic_m(\zbf(0))} \le C \ep^2 \de^{N} \myfor 0\le t_n=nh \le\de^{-1}
\]
with constants $C$, $s_0$ and $\de_0$.
\end{proposition}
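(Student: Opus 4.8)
The plan is to turn the exact invariance of $\Ic_m$ under the composed flow into a telescoping estimate and then to control the accumulated defect. First I would record that $\Ic_m(\zbf)=\sum_{\kbf}\bigl(\sum_{l:n(l)=m}k_l\bigr)\sum_{j\in\ind}\abs{z_j^\kbf}^2$ is a quadratic form depending only on the moduli $\abs{z_j^\kbf}$, so it is unchanged under any phase multiplication $z_j^\kbf\mapsto\e^{\iu\varphi(\kbf)}z_j^\kbf$; in particular $\Ic_m(\widetilde\zbf)=\Ic_m(\zbf)$ for the functions $\widetilde z_j^\kbf=z_j^\kbf\e^{-\iu(\kbf\cdot\omtbf)h}$ of Lemma~\ref{lemma-splittingmfe}. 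Combining Lemma~\ref{lemma-invariant} with Lemma~\ref{lemma-splittingmfe} and expanding the quadratic form bilinearly then gives, for each $n$,
\begin{equation*}
\Ic_m\bigl(\zbf(\de t_n)\bigr)-\Ic_m\bigl(\zbf(\de t_{n+1})\bigr)=2\ReT\sum_{\kbf}\Bigl(\sum_{l:n(l)=m}k_l\Bigr)\sum_{j\in\ind}\overline{\widetilde z_j^\kbf(\de t_{n+1})}\,\bigl(d_j^\kbf+e_j^\kbf\bigr)(\de t_n)+\Ic_m\bigl((\dbf+\ebf)(\de t_n)\bigr).
\end{equation*}
Summing this over $n'=0,\dots,n-1$ with $n\le\de^{-1}h^{-1}$ and taking absolute values, the proposition reduces to bounding the $\max(1,m)^s$-weighted sum over $m\in\Nc$ of each per-step contribution by $C\ep^2\de^{N+1}h$.

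For these estimates I would use throughout that $n(l)\le\abs{l}^2$ for $l\in\ind$ (which follows from $\abs{\ell\pm l\bmod 2K}\le\abs{\ell\pm l}$ componentwise together with the parallelogram identity), so that the weight $\max(1,m)^s$ attached to an index $l$ with $n(l)=m$ is at most $\abs{l}^{2s}$. Together with $\Ga^{\jvec}=2\abs{j}$, the submultiplicativity $\Ga^{\kbf^1+\kbf^2}\le\Ga^{\kbf^1}\Ga^{\kbf^2}$, and the fact that the modulation functions are supported on multi-indices of norm at most $N$, this yields, for $s\ge 2\widehat s$, a weight bound of the form
\[
\sum_{m\in\Nc}\max(1,m)^s\,\Bigl|\sum_{l:n(l)=m}k_l\Bigr|\le C\bigl(\abs{j(\kbf)}^{2s}+(\Ga^\kbf)^{2(s-\widehat s)}\bigr)\qquad(\norm{\kbf}\le N),
\]
since a support index $l$ of $\kbf$ that is not comparable to $j(\kbf)$ must be nearly cancelled by another large index, forcing $\Ga^\kbf\gtrsim\abs{l}^2$. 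The quadratic-in-defect term is then controlled by Lemma~\ref{lemma-defect} and is of higher order in $\de$. The cross term with the cut-off defect $\dbf$ is split according to the two summands above: the $\abs{j(\kbf)}^{2s}$-part by pairing $\normv{\zbf}_{s,\ta}\le C\ep$ with $\normbig{\dbf}_{s,\ta}\le C\ep\de^{N+1}h$, and the $(\Ga^\kbf)^{2(s-\widehat s)}$-part by pairing $\normv{\Gabf^{s-\widehat s}\zbf}_{\widehat s,\ta}\le C\ep$ with $\normv{\Gabf^{s-\widehat s}\dbf}_{\widehat s,\ta}\le C\ep\de^{N+1}h$ and using $\sum_{j\in\ind}\abs{j}^{-2\widehat s}<\infty$ (recall $2\widehat s=d+1>d$); both give $C\ep^2\de^{N+1}h$. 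The $\abs{j(\kbf)}^{2s}$-part of the cross term with $\ebf$ is handled identically, using $\normbig{\ebf}_{s,\ta}\le C\ep\de^{N+1}h$.

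The hard part is the $(\Ga^\kbf)^{2(s-\widehat s)}$-part of the cross term with the near-resonant defect $\ebf$, because in the $\Gabf$-weighted norm Lemma~\ref{lemma-defect} provides only the weak bound $\normv{\Gabf^{s-\widehat s}\ebf}_{\widehat s,\ta}\le C\ep\de h$, which is short of the required $\de^{N+1}$ by a factor $\de^{N}$. To recover that factor one exploits that $e_j^\kbf$ is nonzero only for near-resonant $(j,\kbf)$, and then the non-resonance condition of Assumption~\ref{assum-nonres}(b), applied with $\kbf-\jvec$ in place of $\kbf$ exactly as in the proof of Lemma~\ref{lemma-defect}, bounds \emph{every} support index $l$ of $\kbf$ by $\abs{l}^2\le C\de^{N/(2s_2)}\Ga^\kbf$. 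Hence on near-resonant $\kbf$ the weight improves to $C\bigl(\abs{j(\kbf)}^{2s}+\de^{Ns/(2s_2)}(\Ga^\kbf)^s\bigr)$, and for $s\ge 2s_2$ this carries the spare factor $\de^{N}$ while $(\Ga^\kbf)^s\le(\Ga^\kbf)^{2(s-\widehat s)}$ for $s\ge 2\widehat s$; pairing the resulting $\de^{N}(\Ga^\kbf)^{2(s-\widehat s)}$-weight with $\normv{\Gabf^{s-\widehat s}\zbf}_{\widehat s,\ta}\le C\ep$ and $\normv{\Gabf^{s-\widehat s}\ebf}_{\widehat s,\ta}\le C\ep\de h$ and summing $\sum_{j\in\ind}\abs{j}^{-2\widehat s}$ gives $C\ep^2\de^{N+1}h$ again. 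This is precisely where $s$ must be taken at least of order $\max(d,s_2)$, which fixes $s_0$. Inserting all per-step bounds into the telescoped sum and using $n\le\de^{-1}h^{-1}$ yields the asserted estimate $C\ep^2\de^{N}$.
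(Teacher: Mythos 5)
Your telescoping setup, bilinear expansion, weight estimate, and the idea of applying Assumption~\ref{assum-nonres}(b) at near-resonant indices are all aligned with the paper's argument. But you have overlooked a structural fact that both simplifies the proof and, more importantly, shifts where the non-resonance condition is actually needed. By construction step (iii), $z_{j,p}^\kbf\equiv 0$ for every $p$ at every near-resonant index, whereas $e_j^\kbf$ is supported exactly on those indices; hence $\overline{\widetilde z_j^\kbf}\,e_j^\kbf\equiv 0$ for all $(j,\kbf)$, and the entire cross term with $\ebf$ --- the piece you call ``the hard part'' --- vanishes identically. The elaborate $\de^N$-extraction you carry out there is aimed at a quantity that is zero.

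The non-resonance condition is instead needed in the place you brushed aside: the quadratic defect term $\Ic_m(\dbf+\ebf)$, specifically its $\abs{e_j^\kbf}^2$ contribution. You assert it is ``of higher order in $\de$'' by Lemma~\ref{lemma-defect} alone, but from $\normv{\Gabf^{s-\widehat s}\ebf}_{\widehat s,\ta}\le C\ep\de h$ the per-step bound is only of order $\ep^2\de^2h^2$, which after summing over $\de^{-1}h^{-1}$ steps gives only $\ep^2\de h$, far short of $\ep^2\de^N$. Since $e_j^\kbf\ne 0$ forces $(j,\kbf)$ near-resonant, you must apply precisely the improved near-resonance weight bound $\de^N(\Ga^\kbf)^{2(s-\widehat s)}$ that you (unnecessarily) derived for the cross term; with it the per-step bound becomes $\ep^2\de^{N+2}h^2$ and the summed contribution $\ep^2\de^{N+1}h$, which suffices. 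So the mechanism you built is the right one --- you just pointed it at the wrong term, and left the genuinely delicate one unexamined.
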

\begin{proof}
Throughout the proof, we work with the representative $\kbf$ of $[\kbf]$ for which the minimum in the definition~\eqref{eq-Ga-prod} of $\Ga^\kbf$ is attained.

(a) By Lemma~\ref{lemma-splittingmfe} and Lemma~\ref{lemma-invariant} we have
\[
\absbig{\Ic_m(\zbf(\de t_{n+1})) - \Ic_m(\zbf(\de t_{n}))} \le 2 \sum_{\kbf} \sum_{j\in\ind} \sum_{\substack{l\in\ind\\ n(l) = m}} \abs{k_l} \bigl(\abs{z_j^\kbf}\abs{d_j^\kbf} + \abs{d_j^\kbf}^2 + \abs{e_j^\kbf}^2 \bigr)
\]
since $z_j^\kbf=0$ if $e_j^\kbf\ne 0$. Here, the modulation functions $z_j^\kbf$ on the right-hand side are evaluated at time $\ta=\de t_{n+1}$ and the defects $d_j^\kbf$ and $e_j^\kbf$ at time $\ta=\de t_n$.

(b) Let $\kbf\ne\mathbf{0}$ and $j=j(\kbf)=\sum_{l\in\ind} k_l l \bmod{2M} \in\ind$, and let $\bar{l}\in\ind$ be the index of largest norm $\abs{\cdot}$ with $k_{\bar{l}}\ne 0$. Then we have
\[
\abs{\bar{l}}^2 \le \abs{j} \cdot \Ga^\kbf
\]
Indeed, if $\abs{\bar{l}}^2 > \Ga^\kbf$, then necessarily $\abs{k_{\bar{l}}}=1$ and $\norm{\kbf} \cdot \abs{l}\le \abs{\bar{l}}$ for all $l\ne\bar{l}$ with $k_l\ne 0$, and hence 
\[
\abs{\bar{l}} \le \absBig{ j - \sum_{\bar{l}\ne l\in\ind} k_l l} \le \abs{j} + \frac{\norm{\kbf}-1}{\norm{\kbf}} \, \abs{\bar{l}},
\]
i.e., $\abs{\bar{l}} \le \norm{\kbf} \cdot\abs{j}$.
This implies for $s\ge 2\widehat{s}$ that 
\[
\sum_{l\in\ind} \abs{k_l} \abs{l}^{2s} \le \norm{\kbf} \cdot \abs{\bar{l}}^{2s} \le 
 \abs{j}^{2\widehat{s}} \bigl(\Ga^\kbf\bigr)^{2(s-\widehat{s})}.
\]
The last estimate improves for near-resonant indices (for which $e_j^\kbf\ne 0$) by the non-resonance condition in Assumption~\ref{assum-nonres} to
\[
\sum_{l\in\ind} \abs{k_l} \abs{l}^{2s} \le c_2^{s-2\widehat{s}} \abs{j}^{2\widehat{s}} \bigl(\Ga^\kbf\bigr)^{2(s-\widehat{s})} \de^{N}
\]
if $s-2\widehat{s} \ge s_2$.

(c) By (a), (b), the Cauchy-Schwarz inequality 
and Lemma~\ref{lemma-nj} below we have
\begin{multline*}
\sum_{m\in\Nc} \max(1,m)^{s} \absbig{\Ic_m(\zbf(\de t_{n+1})) - \Ic_m(\zbf(\de t_{n}))}\\ \le C \normv{\Gabf^{s-\widehat{s}}\zbf}_{\widehat{s}} \normv{\Gabf^{s-\widehat{s}}\dbf}_{\widehat{s}}
 + C \normv{\Gabf^{s-\widehat{s}}\dbf}_{\widehat{s}}^2 + C c_2^{s-2\widehat{s}} \de^{N} \normv{\Gabf^{s-\widehat{s}}\ebf}_{\widehat{s}}^2.
\end{multline*}
The statement of the proposition thus follows from Lemma~\ref{lemma-size} and Lemma~\ref{lemma-defect} by summing up.
\end{proof}

\begin{lemma}\label{lemma-nj}
We have
\[
\sfrac1C \abs{j}^2 \le \max \bigl(1,\abs{n(j)} \bigr) \le C \abs{j}^2 \forall j\in\ind
\]
with a positive constant $C$ depending only on $\ell$.
\end{lemma}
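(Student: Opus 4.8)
The plan is to compare $n(j)$ with $\abs{j}^2$ by separating the \emph{non-aliasing} case, where $n(j)$ equals $\abs{j}^2$ on the nose, from the \emph{aliasing} case, where $\abs{j}$ is automatically forced to be of order $K$ and one has to work a little.

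Fix $j\in\ind$ and set $\ell^{+}:=(\ell+j)\bmod{2K}$ and $\ell^{-}:=(\ell-j)\bmod{2K}$, both in $\disc$. Since the entries of $\ell\pm j$ lie in $\{-2K,\dots,2K-2\}$, there are $p^{+},p^{-}\in\{-1,0,1\}^d$ with $\ell^{\pm}=\ell\pm j-2Kp^{\pm}$, and $n(j)=\tfrac12\abs{\ell^{+}}^2+\tfrac12\abs{\ell^{-}}^2-\abs{\ell}^2$. Call $j$ \emph{aliasing} if $p^{+}\neq 0$ or $p^{-}\neq 0$, i.e.\ if $\ell+j$ or $\ell-j$ leaves $\disc$. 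If $j$ is non-aliasing then $\ell^{\pm}=\ell\pm j$, and the parallelogram identity gives $n(j)=\abs{j}^2$, so both inequalities hold with constant $1$ there. The key elementary fact in the aliasing case is that an alias in a coordinate $i$ pins $\abs{j_i}$ near $K$: each of the possibilities $p_i^{+}=\pm1$ and $p_i^{-}=\pm1$ confines one of $\ell_i\pm j_i$ to an interval abutting $\pm K$ and thus forces $\abs{j_i}\ge K-\abs{\ell}$; hence for $K\ge 2\abs{\ell}$ every aliasing $j$ satisfies $\abs{j}^2\ge K^2/4$.

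I would then combine this with the crude bound $-\abs{\ell}^2\le n(j)\le dK^2$ (immediate from $0\le\abs{\ell^{\pm}}^2\le dK^2$). This yields the \emph{upper} bound at once for aliasing $j$: $\abs{n(j)}\le dK^2+\abs{\ell}^2\le(4d+\abs{\ell}^2)\abs{j}^2$. For the \emph{lower} bound I would argue by contradiction: suppose $n(j)<K^2/4$. Then $\abs{\ell^{+}}^2+\abs{\ell^{-}}^2=2n(j)+2\abs{\ell}^2<\tfrac34K^2$ as soon as $K>2\sqrt2\,\abs{\ell}$, so every entry obeys $\abs{\ell_i^{\pm}}<\tfrac{\sqrt3}{2}K$; feeding this into $j=\ell^{+}-\ell+2Kp^{+}$ and $j=\ell-\ell^{-}-2Kp^{-}$ shows that $p_i^{+}=\pm1$ or $p_i^{-}=\pm1$ would force $\abs{j_i}>K$, which is impossible. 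Hence $p^{+}=p^{-}=0$, i.e.\ $j$ is non-aliasing---a contradiction. So every aliasing $j$ with $K\ge K_0:=\max(1,8\abs{\ell})$ has $n(j)\ge K^2/4$, whence $\abs{n(j)}\ge K^2/4\ge\tfrac1{4d}\abs{j}^2$ since $\abs{j}^2\le dK^2$. It remains to dispose of the innocuous range $K<K_0$: there $1\le\abs{j}^2\le dK_0^2$ and $\abs{n(j)}\le dK_0^2+\abs{\ell}^2$, so $\max(1,\abs{n(j)})\ge1\ge\abs{j}^2/(dK_0^2)$ and $\max(1,\abs{n(j)})\le(dK_0^2+\abs{\ell}^2)\abs{j}^2$. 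Taking $C$ to be the largest of $1$, $4d$, $4d+\abs{\ell}^2$ and $dK_0^2+\abs{\ell}^2$---all depending only on $\ell$ (and the fixed dimension $d$)---finishes the proof.

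The step I expect to be the genuine obstacle is the lower bound for aliasing $j$. A priori the folding of modes modulo $2K$ might conspire to make $\abs{\ell^{+}}^2+\abs{\ell^{-}}^2$ nearly equal to $2\abs{\ell}^2$, i.e.\ $n(j)$ of order $1$, while $\abs{j}\sim K$ is large, which would wreck the comparability. The contradiction argument is exactly what rules this out: were all the folded frequencies $\ell_i^{\pm}$ that small, no folding could have taken place to begin with. The rest---the upper bound, the non-aliasing identity, and the small-$K$ bookkeeping---is routine.
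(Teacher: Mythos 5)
Your proof is correct, but it takes a genuinely different route from the paper's. The paper's proof is based on the pair of entry-wise inequalities $\abs{\ell_i\pm j_i\bmod 2K}\le\abs{\ell_i\pm j_i}$ and $\abs{\ell_i\pm j_i\bmod 2K}\ge\min(\abs{\ell_i+j_i},\abs{\ell_i-j_i})$, which hold \emph{regardless} of whether folding modulo $2K$ occurs; summing the resulting per-coordinate bound $j_i^2-2\abs{j_i}\abs{\ell_i}$ and applying Cauchy--Schwarz gives at once the uniform quantitative estimate $n(j)\ge\abs{j}^2-2\abs{\ell}\,\abs{j}$, and the lemma then follows from the single case split $\abs{j}\gtrless 4\abs{\ell}$. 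Your argument instead makes the aliasing/non-aliasing dichotomy explicit via the fold vectors $p^\pm$: in the non-aliasing case you use the exact parallelogram identity $n(j)=\abs{j}^2$, and in the aliasing case you run a global (not coordinate-wise) contradiction argument to show $n(j)\ge K^2/4$ once $K\gtrsim\abs{\ell}$, closing with a separate small-$K$ branch. Both proofs are valid; the paper's is more economical because the uniform lower bound $n(j)\ge\abs{j}^2-2\abs{\ell}\,\abs{j}$ never references $K$ and so needs no small-$K$ bookkeeping, whereas your decomposition makes the mechanism behind the inequality (aliasing forces $\abs{j}\sim K$, which in turn forces $n(j)\sim K^2$) structurally transparent. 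A minor practical point: the paper's entry-wise $\min$-inequality, which you do not use, is exactly what lets one absorb the aliasing case without ever discussing it; if you want a shorter write-up you could adopt that single inequality and keep the rest of your reasoning.
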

\begin{proof}
We have $-\abs{\ell}^2 \le n(j)\le \abs{j}^2$ since $\abs{\ell\pm j\bmod{2K}}\le \abs{\ell\pm j}$, and hence $\abs{n(j)} \le C\abs{j}^2$. To get a lower bound for $\abs{n(j)}$ we note that $\abs{\ell_1\pm j_1\bmod{2K}} \ge \min(\abs{\ell_1+ j_1},\abs{\ell_1- j_1})$, and hence
\[
\sfrac12 \abs{\ell_1+j_1\bmod{2K}}^2 + \sfrac12 \abs{\ell_1-j_1\bmod{2K}}^2 - \ell_1^2 \ge j_1^2 - 2\abs{j_1} \cdot\abs{\ell_1}.
\]
This holds not only for the first component, and we get by summing up all components
\[
n(j) \ge \abs{j}^2 - 2\abs{\ell} \cdot\abs{j}.
\]
We therefore get $n(j)\ge \frac12 \abs{j}^2$ for $4\abs{\ell} < \abs{j}$. For $4\abs{\ell}\ge \abs{j}$ we have $\max(1,\abs{n(j)})\ge 1 \ge \frac1C \abs{j}^2$. 
\end{proof}

Next we show that the almost-invariants $\Ic_m$ of \eqref{eq-Ic} are close to the \emph{super-actions}
\begin{equation}\label{eq-superactions}
I_m(\xi) = \sum_{\substack{l\in\ind\\ n(l) = m}} \abs{\xi_l}^2, \qquad m\in\Nc,
\end{equation}
that collect those \emph{actions} $\abs{\xi_l}^2$ with the same value $n(l)$.

\begin{proposition}\label{prop-actionsinvariants}
We have for $\de\le\de_0$ and $s\ge s_0$
\[
\sum_{m\in\Nc} \max(1,m)^{s} \absbig{\Ic_m(\zbf(\de t_n)) - I_m(\xi^n) } \le C \ep^2 \de \myfor 0\le t_n=nh \le\de^{-1}
\]
with constants $C$, $s_0$ and $\de_0$.
\end{proposition}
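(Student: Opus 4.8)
The plan is to compare both quantities with the value at $t_n$ of the modulated Fourier expansion~\eqref{eq-mfe}. Writing $\xi(t_n)=(\xi_j(t_n))_{j\in\ind}$ with the modulation functions constructed in Subsect.~\ref{subsec-constr}, I would use
\[
\absbig{\Ic_m(\zbf(\de t_n)) - I_m(\xi^n)} \le \absbig{\Ic_m(\zbf(\de t_n)) - I_m(\xi(t_n))} + \absbig{I_m(\xi(t_n)) - I_m(\xi^n)}
\]
and estimate the two contributions to $\sum_{m\in\Nc}\max(1,m)^s(\cdots)$ separately. Throughout, the weight $\max(1,m)^s$ is converted, via Lemma~\ref{lemma-nj}, into the Sobolev weight $\abs{l}^{2s}$ on the modes $l$ with $n(l)=m$, so that weighted sums of quantities of the form $\sum_{l:n(l)=m}(\cdots)$ are controlled by $H^s$-type norms, in particular $\sum_{m}\max(1,m)^s I_m(\eta)\le C\norm{\eta}_s^2$.

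For the second term I would invoke Proposition~\ref{prop-error}, $\norm{\xi^n-\xi(t_n)}_s\le C\ep\de^{N}$ for $0\le t_n\le\de^{-1}$, together with the a priori bounds $\norm{\xi^n}_s\le C\ep$ and $\norm{\xi(t_n)}_s\le C\ep$ established in parts (a)--(b) of its proof (here $\de^{-1}\le\ep^{-1/2}$). Since $I_m(\xi^n)-I_m(\xi(t_n))=\sum_{l:n(l)=m}(\abs{\xi^n_l}^2-\abs{\xi_l(t_n)}^2)$, a Cauchy--Schwarz estimate in the $\abs{l}^{2s}$-weighted sum gives $\sum_{m}\max(1,m)^s\abs{I_m(\xi^n)-I_m(\xi(t_n))}\le C\norm{\xi^n-\xi(t_n)}_s(\norm{\xi^n}_s+\norm{\xi(t_n)}_s)\le C\ep^2\de^{N}$, which is $\le C\ep^2\de$ because $N\ge 2$ and $\de\le\de_0<1$.

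The heart of the matter is that $\Ic_m(\zbf(\de t_n))$ and $I_m(\xi(t_n))$ share the same leading part. Since $z_j^\kbf=0$ unless $j=j(\kbf)$ (step (i) of \eqref{eq-construction}, see \eqref{eq-construction-singlewavelargek}), in the expansion of $\abs{\xi_l(t_n)}^2=\absbig{\sum_{\kbf}z_l^\kbf(\de t_n)\e^{-\iu(\kbf\cdot\omtbf)t_n}}^2$ only residue classes with $j(\kbf)=l$ contribute, and among these the unique unit vector is $\kbf=\langle l\rangle$. Putting $r_l(t_n)=\sum_{\kbf\ne\langle l\rangle}z_l^\kbf(\de t_n)\e^{-\iu(\kbf\cdot\omtbf)t_n}$ one obtains
\[
\abs{\xi_l(t_n)}^2 = \absbig{z_l^{\langle l\rangle}(\de t_n)}^2 + 2\ReT\bigl(z_l^{\langle l\rangle}(\de t_n)\e^{-\iu\omt_l t_n}\,\overline{r_l(t_n)}\bigr) + \abs{r_l(t_n)}^2 .
\]
On the other side, $\Ic_m$ is well defined by part (c) of Assumption~\ref{assum-nonres}, and using again $z_j^\kbf=0$ for $j\ne j(\kbf)$ one checks that the unit classes $\kbf=\langle l'\rangle$ contribute exactly $\sum_{l':n(l')=m}\abs{z_{l'}^{\langle l'\rangle}(\de t_n)}^2$ to $\Ic_m(\zbf(\de t_n))$, i.e. precisely $\sum_{l:n(l)=m}\abs{z_l^{\langle l\rangle}(\de t_n)}^2$. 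Hence $\Ic_m(\zbf(\de t_n))-I_m(\xi(t_n))$ equals the contribution of the non-unit classes to $\Ic_m$ minus $\sum_{l:n(l)=m}\bigl(2\ReT(z_l^{\langle l\rangle}\e^{-\iu\omt_l t_n}\overline{r_l})+\abs{r_l}^2\bigr)$.

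It then remains to bound both remainders by $C\ep^2\de$ after the weighted summation over $m$. The decisive input is \eqref{eq-p0}: all off-diagonal modulation coefficient functions vanish at order $p=0$, so $r_l(t_n)$ and the non-diagonal part of $\zbf$ are of size $O(\ep\de)$ in the norms of Lemma~\ref{lemma-size} (since $\de\le\de_0<1$), while $z_l^{\langle l\rangle}(\de t_n)$ is of size $O(\ep)$. Thus $\sum_m\max(1,m)^s\sum_{l:n(l)=m}\abs{z_l^{\langle l\rangle}}\abs{r_l}\le C\norm{(z_l^{\langle l\rangle})_l}_s\,\norm{(\sum_{\kbf\ne\langle l\rangle}\abs{z_l^\kbf})_l}_s\le C\ep^2\de$ by Lemma~\ref{lemma-nj}, Cauchy--Schwarz and Lemma~\ref{lemma-size}; the term $\abs{r_l}^2$ gives only $O(\ep^2\de^2)$; and the non-unit contribution to $\Ic_m$ is handled exactly as the near-resonant terms in the proof of Proposition~\ref{prop-almostinvariants}(b) (bounding $\sum_l\abs{k_l}\abs{l}^{2s}$ by $\abs{j(\kbf)}^{2\widehat{s}}(\Ga^\kbf)^{2(s-\widehat{s})}$, then invoking \eqref{eq-Ga-prod} and the $\Gabf^{s-\widehat{s}}$-rescaled bound of Lemma~\ref{lemma-size} with the extra factor $\ep\de$ for the non-diagonal part), which is again $O(\ep^2\de^2)$. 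Summing the pieces yields the claimed $C\ep^2\de$. I expect the only real obstacle to be the bookkeeping of these weighted double sums over the modulation indices, which is a lighter version of the estimates already carried out for Proposition~\ref{prop-almostinvariants}; it is worth noting that the dominant error, of size $\ep^2\de$ rather than $\ep^2\de^2$ or $\ep^2\de^N$, stems solely from the cross term $2\ReT(z_l^{\langle l\rangle}\e^{-\iu\omt_l t_n}\overline{r_l})$, in which only one of the two factors is small of order $\de$.
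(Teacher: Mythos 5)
Your proof is correct and follows essentially the same route as the paper: you compare both quantities with the intermediate expression $\sum_{l:n(l)=m}\abs{z_l^{\skla{l}}(\de t_n)}^2$, use \eqref{eq-p0} (vanishing of off-diagonal modulation coefficients at order $p=0$) to make the off-diagonal contributions $O(\ep\de)$, invoke part (b) of the proof of Proposition~\ref{prop-almostinvariants} for the non-unit contribution to $\Ic_m$, and finish with Proposition~\ref{prop-error} and Cauchy--Schwarz weighted via Lemma~\ref{lemma-nj}. The paper presents the three resulting pieces directly (with the dominant $C\ep^2\de$ term coming from $I_m(\xi(t_n))-\sum_l\abs{z_l^{\skla{l}}}^2$, exactly your cross term), so the decompositions and estimates coincide up to the grouping of terms.
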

\begin{proof}
We omit the argument $\de t_n$ of the modulation functions. We have by \eqref{eq-construction-singlewavelargek}
\[
\Ic_m(\zbf) - \sum_{\substack{l\in\ind :\\ n(l)=m}} \abs{z_l^{\skla{l}}}^2 = \sum_{\substack{l\in\ind :\\ n(l)=m}} \sum_{\kbf\ne\skla{l}} k_l \abs{z_{j(\kbf)}^\kbf}^2,
\]
and part (b) of the proof of Proposition~\ref{prop-almostinvariants} together with Lemma~\ref{lemma-size}, Lemma~\ref{lemma-nj} and \eqref{eq-p0} implies
\[
\sum_{m\in\Nc} \max(1,m)^{s} \absBig{\Ic_m(\zbf) - \sum_{\substack{l\in\ind :\\ n(l)=m}} \abs{z_l^{\skla{l}}}^2} \le C \ep^2 \de^2.
\]
On the other hand, we have for the modulated Fourier expansions $\xi(t)$ \eqref{eq-mfe}
\[
\absBig{I_m(\xi(t_n)) - \sum_{\substack{l\in\ind :\\ n(l)=m}} \abs{z_l^{\skla{l}}}^2} \le 2 \sum_{\substack{l\in\ind :\\ n(l)=m}}  \Bigl(\sum_{\kbf\ne\skla{l}} \abs{z^\kbf_l} \Bigr) \Bigl( \sum_{\kbf} \abs{z^\kbf_l} \Bigr),
\]
and hence by the Cauchy-Schwarz inequality together with Lemma~\ref{lemma-size}, Lemma~\ref{lemma-nj} and \eqref{eq-p0}
\[
\sum_{m\in\Nc} \max(1,m)^{s} \absBig{I_m(\xi(t_n)) - \sum_{\substack{l\in\ind :\\ n(l)=m}} \abs{z_l^{\skla{l}}}^2} \le C \ep^2 \de.
\]
Finally, we have by Proposition~\ref{prop-error} and Lemma~\ref{lemma-nj} for the numerical solution $\xi^n$
\[
\sum_{m\in\Nc} \max(1,m)^{s} \absbig{I_m(\xi(t_n)) - I_m(\xi^n)} \le C \ep^2 \de^{N},
\]
where we have used that $\abs{\abs{\xi_j(t_n)}^2 - \abs{\xi^n_j}^2} \le \abs{\xi_j(t_n)-\xi^n_j} (\abs{\xi_j(t_n)}+\abs{\xi^n_j})$. 
Putting all this together proves the statement of the proposition.
\end{proof}

\subsection{Modulated Fourier expansion on another time interval}

All estimates of the previous subsections are valid on the time interval $0\le t_n = nh \le\de^{-1}$. We assume without loss of generality that
\begin{equation}\label{eq-ntilde}
\de^{-1} = \widetilde{n}h
\end{equation}
for some $\widetilde{n}\in\N$. In this subsection, we consider consecutive short time intervals
\[
\nu\de^{-1} \le t_n = nh \le (\nu+1)\de^{-1} \myfor \nu=0,1,2,\ldots.
\]
In principle, we can repeat the construction of a modulated Fourier expansion described in Subsects.~\ref{subsec-modequations}--\ref{subsec-constr} on these time intervals, taking $\xi^{\nu\widetilde{n}}$ as initial value instead of $\xi^0$. This gives us modulation functions $\zbf^\nu$ on the $\nu$th time interval constructed in such a way that
\[
\xi_j^n \approx \xi_j^\nu(t_n) = \sum_{\kbf} z_j^{\kbf,\nu}(\de t_n) \e^{-\iu(\kbf\cdot\omtbf) t_n} \myfor \nu \de^{-1} \le t_n\le (\nu+1)\de^{-1}.
\]

The estimates of Subsects.~\ref{subsec-size}--\ref{subsec-invariants} remain valid provided that $\xi^{\nu\widetilde{n}}$ satisfies a smallness condition as $\xi^0$ \eqref{eq-smallxi0}. In the following lemma, we bound the difference of the modulated Fourier expansions $\zbf^\nu$ and $\zbf^{\nu-1}$ at the interface $\de t_{\nu\widetilde{n}}$ of their intervals of validity.

\begin{lemma}\label{lemma-interface}
Assume that for $\nu\ge 1$
\[
\norm{\xi^{(\nu-1)\widetilde{n}}}_s \le 2\widehat{c}^{-1} \ep \myand \norm{\xi^{\nu\widetilde{n}}}_s \le 2\widehat{c}^{-1} \ep.
\]
Then we have for $\de\le\de_0$ and $s\ge s_0$
\[
\normv{\Gabf^{s-\widehat{s}}\zbf^\nu(\de t_{\nu\widetilde{n}}) - \Gabf^{s-\widehat{s}}\zbf^{\nu-1}(\de t_{\nu\widetilde{n}})}_{\widehat{s}} \le C \ep \de^{N}
\]
with constants $C$, $s_0$ and $\de_0$.
\end{lemma}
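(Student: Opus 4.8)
The plan is to compare both modulated Fourier expansions at the interface time $t_{\nu\widetilde n}$ by exploiting that each of them is an accurate (to order $\ep\de^N$) approximation of the \emph{same} numerical solution $\xi^n$ near that time. The subtle point is that ``being close to $\xi^n$'' only controls the sum $\sum_{\kbf} z_j^\kbf$ of the modulation functions, not the individual modulation functions $z_j^\kbf$ for each $\kbf$ separately; so a direct triangle-inequality argument via Proposition~\ref{prop-error} is not enough. I would instead run the recursive construction of Subsects.~\ref{subsec-modequations}--\ref{subsec-constr} for both $\zbf^\nu$ and $\zbf^{\nu-1}$ in parallel and track the difference order by order in the asymptotic expansion~\eqref{eq-mfexp}.

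First I would set up the comparison: both families of modulation coefficient functions $z_{j,p}^{\kbf,\nu}$ and $z_{j,p}^{\kbf,\nu-1}$ satisfy the \emph{same} modulation equations~\eqref{eq-modsys-eqmot} with the same coefficients $Q_{j,k,l}$ and the same near-resonant index set, differing only in the initial data~\eqref{eq-modsys-init}, where $\ep^{-1}\xi_j^{\nu\widetilde n}$ replaces $\ep^{-1}\xi_j^{(\nu-1)\widetilde n}$ evolved forward. The difference $\Delta z_{j,p}^\kbf := z_{j,p}^{\kbf,\nu} - z_{j,p}^{\kbf,\nu-1}$ therefore satisfies a \emph{linearization} of the recursion: for off-diagonal indices~(ii) it is given by the same formula~\eqref{eq-polsol} with $P$ replaced by the corresponding difference of right-hand sides, which is multilinear in the $z$'s and hence, by telescoping, a sum of terms each containing exactly one factor $\Delta z_{k,q}^\lbf$ with $q<p$ and the rest ordinary (bounded) modulation coefficient functions. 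The key input for the initial data is the size of $\norm{\xi^{\nu\widetilde n}-\zbf^{\nu-1}(\de t_{\nu\widetilde n})}_s$, which is $\le C\ep\de^N$ by Proposition~\ref{prop-error} applied on the $(\nu-1)$st interval — but, again, this only controls $\sum_\kbf \Delta z_{j,0}^\kbf$ at $\ta=0$, i.e.\ the $p=0$, $\kbf=\jvec$ part (using~\eqref{eq-p0}). One then propagates this smallness through the recursion: at each level $p$, the contribution of $\Delta z_{j,p}^\jvec$ comes from integrating $\Delta\dot z_{j,p}^\jvec$ (computed as in~\eqref{eq-construction-diag-der}) plus the initial value $\Delta z_{j,p}^\jvec(0)$ from~\eqref{eq-modsys-init}, while the off-diagonal and near-resonant pieces are handled exactly as in Lemma~\ref{lemma-size}, now with one ``small'' factor. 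This yields $\normv{\Gabf^{s-\widehat s}\Delta\zbf_p}_{\widehat s,\ta}\le C\ep\de^N$ for every $p$ on $0\le\ta\le 1$.

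The estimate in the $\widehat s$-norm with the $\Gabf^{s-\widehat s}$ rescaling, rather than the plain $s$-norm, is needed precisely because the near-resonant modulation functions of the two expansions are both zero but the \emph{defects} $\ebf^\nu,\ebf^{\nu-1}$ they induce are only small in the rescaled $\widehat s$-norm (last bound of Lemma~\ref{lemma-defect}); this is the same mechanism already used in Proposition~\ref{prop-almostinvariants}, so I would reuse the property~\eqref{eq-Ga-prod}, $\Ga^{\jvec}=2\abs j$, and the multiplicativity $\abs{vw}_\ta\le\abs v_\ta\abs w_\ta$ together with Lemma~\ref{lemma-nonlinearity} in the norm $\normv{\cdot}_{\widehat s,\ta}$. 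Summing the finitely many contributing orders $p\le N$ (terms with $2(m+m')>p+3$ vanish, and~\eqref{eq-cutoff} caps the construction at $p=N$) and evaluating at $\ta=\de t_{\nu\widetilde n}\le 1$ gives the claimed bound.

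\textbf{Main obstacle.} The one delicate point is the ``transfer of smallness'' from $\sum_\kbf\Delta z^\kbf_j$ (which is what Proposition~\ref{prop-error} gives at the interface) to the \emph{individual} rescaled components $\Gabf^{s-\widehat s}\Delta\zbf_p$ that appear in the conclusion. This works because the near-resonant and large-$\kbf$ components of $\zbf^\nu$ and $\zbf^{\nu-1}$ are literally equal (both set to zero by~\eqref{eq-construction-singlewavelargek} and~\eqref{eq-nearres}), so all of the discrepancy lives in the off-diagonal indices~(ii) and the diagonal index $\kbf=\jvec$, and on those indices the recursion is a genuine contraction-type fixed-point argument driven by the small interface datum. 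Making this precise — in particular checking that the argument of the initial-value step~\eqref{eq-modsys-init} does not lose powers of $\de$ when the defect $\ebf$ is involved — is where the bulk of the (routine but careful) work lies; everything else is a rerun of Lemmas~\ref{lemma-size} and~\ref{lemma-defect}.
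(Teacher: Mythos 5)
Your overall strategy matches the paper's proof: run the two recursive constructions in parallel, track differences order by order, use that the near-resonant and large-$\kbf$ modulation coefficient functions are identically zero for both expansions, feed the interface datum in through the initial-condition step~\eqref{eq-modsys-init} combined with Proposition~\ref{prop-error}, and pass to the rescaled $\widehat s$-norm via~\eqref{eq-Ga-prod} and $\Ga^{\jvec}=2\abs j$.

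There is, however, one concrete over-statement that breaks the proposed chain of estimates if implemented literally: the claim that $\normv{\Gabf^{s-\widehat s}(\zbf_p^\nu - \zbf_p^{\nu-1})}_{\widehat s,\ta}\le C\ep\de^N$ for \emph{every} $p$. This fails already at $p=0$. For the diagonal index $\kbf=\jvec$ one has $\ep\, z_{j,0}^{\jvec,\nu}(\de t_{\nu\widetilde n})\,\e^{-\iu\omt_j t_{\nu\widetilde n}}=\xi_j^{\nu\widetilde n}$ exactly, while $z_{j,0}^{\jvec,\nu-1}$ is constant with $\ep\, z_{j,0}^{\jvec,\nu-1}\,\e^{-\iu\omt_j t_{\nu\widetilde n}}=\e^{-\iu\omt_j\de^{-1}}\xi_j^{(\nu-1)\widetilde n}$; these two values differ from each other by the sub-leading ($p\ge 1$ and $\kbf\ne\jvec$) contributions of the $(\nu-1)$th modulated Fourier expansion at the interface, which are of order $\ep\de$ by Lemma~\ref{lemma-size} --- not $\ep\de^N$. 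So $\Delta z_{j,0}^\jvec$ is generically of order $\de$, and the per-level bound you claim would falsely predict a much smaller quantity. What is actually true, and what the paper proves, is a \emph{partial-sum} statement: the quantities
\[
M_q^\nu(\zbf) = \normvBig{ \sum_{p=0}^q \ep\de^p\bigl(\zbf_p^\nu(\de t_{\nu\widetilde n}) - \zbf_p^{\nu-1}(\de t_{\nu\widetilde n})\bigr) }_s
\]
satisfy $M_q^\nu(\zbf)\le C\ep\de^q$ by induction on $q$, and the $O(\de)$ discrepancy at level~$0$ is cancelled against comparable discrepancies at higher levels. The cancellation is precisely encoded in~\eqref{eq-modsys-init}: after splitting $\zbf_p=\abf_p+\bbf_p$ into diagonal and off-diagonal parts, subtracting the off-diagonal contributions from the interface identity gives $M_q^\nu(\abf)\le M_q^\nu(\bbf)+C\ep\de^q$, where the $C\ep\de^q$ comes from applying Proposition~\ref{prop-error} on the $(\nu-1)$th interval with the cut-off taken at $p=q$ rather than $p=N$, and the recursion closes with $M_q^\nu(\bbf)\le C\de\, M_{q-1}^\nu(\zbf)$. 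Once you adopt the $M_q$ formulation (and its rescaled version $\widehat M_q$, for which $\widehat M_q(\abf)=M_q(\abf)$), the rest of your outline carries through.
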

\begin{proof}
Let $n = \nu\widetilde{n}$. 

(a) We first show by induction on $q=0,\dots,N$ that
\begin{equation}\label{eq-auxinduction}
M_{q}^\nu(\zbf) := \normvBig{ \sum_{p=0}^{q} \ep \de^p \bigl(\zbf_p^\nu(\de t_n) - \zbf_p^{\nu-1}(\de t_n) \bigr) }_{s} \le C \ep \de^{q},
\end{equation}
where $\zbf_p^\nu = (z_{j,p}^{\kbf,\nu})_{j\in\ind,\kbf\in\Z^\ind}$. For this purpose we split the modulation functions, $\zbf_p=\abf_p+\bbf_p$ with $a_{j,p}^\jvec = z_{j,p}^\jvec$ and $b_{j,p}^\kbf=z_{j,p}^\kbf$ for $\kbf\ne\jvec$. 

We consider the nonlinearities $F_{j,p}^{\kbf,\nu}$ and $F_{j,p}^{\kbf,\nu-1}$ on the right-hand sides of \eqref{eq-modsys-eqmot}. By Lemma~\ref{lemma-nonlinearity} and Lemma~\ref{lemma-size} we have
\[
M_{q}^\nu(\Fbf) \le C \ep\de^{-1} M_{q-1}^\nu(\zbf),
\]
and hence by construction \eqref{eq-construction-offdiag} and \eqref{eq-construction-diag-der}
\begin{equation}\label{eq-proofinterfaceaux}
M_{q}^\nu(\bbf) \le C \de M_{q-1}^\nu(\zbf) \myand M_{q}^\nu(\dot{\abf}) \le C \de M_{q-1}^\nu(\zbf).
\end{equation}

In order to complete the inductive proof of~\eqref{eq-auxinduction}, we need a similar estimate also for $M_{q}^\nu(\abf)$. Note that by construction \eqref{eq-construction-diag-init} of $\zbf^{\nu}$
\[
\sum_{\kbf} \sum_{p=0}^{q} \ep \de^p z_{j,p}^{\kbf,\nu}(\de t_n) \e^{-\iu(\kbf\cdot\omtbf)t_n} = \sum_{\kbf} \sum_{p=0}^{q} \ep \de^p z_{j,p}^{\kbf,\nu-1}(\de t_n) \e^{-\iu(\kbf\cdot\omtbf)t_n} + r_j^n
\]
with $\norm{r^n}_{s}\le C\ep\de^{q}$ by Proposition~\ref{prop-error} (applied on the $(\nu-1)$th interval with modulation functions $z_j^{\kbf,\nu-1}$ truncated after $p=q$ instead of $p=N$ as in~\eqref{eq-cutoff}). This shows that
\begin{equation}\label{eq-aux-interface}
\biggl( \sum_{j\in\ind} \abs{j}^{2s} \absBig{\sum_{p=0}^{q} \ep \de^p \bigl( a_{j,p}^{\jvec,\nu}(\de t_n) - a_{j,p}^{\jvec,\nu-1}(\de t_n)\bigr) }^2 \biggr)^{1/2}
 \le M_{q}^\nu(\bbf) + C\ep\de^{q},
\end{equation}
and hence by~\eqref{eq-proofinterfaceaux}
\[
M_{q}^\nu(\abf) \le C \de M_{q-1}^\nu(\zbf) + C\ep\de^{q}.
\]
This completes the proof of~\eqref{eq-auxinduction}.

(b) In order to prove the statement of the lemma, we have to consider 
\[
\widehat{M}_{q}^\nu(\zbf) := \normvBig{ \sum_{p=0}^{q} \ep \de^p \Gabf^{s-\widehat{s}} \bigl(\zbf_p^\nu(\de t_n) - \zbf_p^{\nu-1}(\de t_n) \bigr) }_{\widehat{s}}
\]
instead of $M_{q}^\nu(\zbf)$. Note that $\widehat{M}_{q}^\nu(\abf) = M_{q}^\nu(\abf)$ and that the estimates~\eqref{eq-proofinterfaceaux} transfer to $\widehat{M}_q^\nu$ by~\eqref{eq-Ga-prod}. This finishes the proof of the lemma.
\end{proof}

The following proposition bounds, in the situation of the above lemma, the difference of the almost-invariants $\Ic_m$ \eqref{eq-Ic} at the interface of two time intervals.

\begin{proposition}\label{prop-diffinvariants}
Assume that for $\nu\ge 1$
\[
\norm{\xi^{(\nu-1)\widetilde{n}}}_s \le 2\widehat{c}^{-1} \ep \myand \norm{\xi^{\nu\widetilde{n}}}_s \le 2\widehat{c}^{-1} \ep.
\]
Then we have for $\de\le\de_0$ and $s\ge s_0$
\[
\sum_{m\in\Nc} \max(1,m)^{s} \absbig{\Ic_m(\zbf^\nu(\de t_{\nu\widetilde{n}})) - \Ic_m(\zbf^{\nu-1}(\de t_{\nu\widetilde{n}})) } \le C \ep^2 \de^{N} 
\]
with constants $C$, $s_0$ and $\de_0$.
\end{proposition}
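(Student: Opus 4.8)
The plan is to estimate the change in each almost-invariant $\Ic_m$ when the modulation functions are replaced by those constructed on the neighbouring interval, and then sum these contributions over $m$ with the weight $\max(1,m)^s$. Writing $\zbf=\zbf^\nu(\de t_{\nu\widetilde n})$ and $\zbf^{-}=\zbf^{\nu-1}(\de t_{\nu\widetilde n})$ and recalling the definition~\eqref{eq-Ic}, one has
\[
\Ic_m(\zbf) - \Ic_m(\zbf^{-}) = \sum_{\kbf} \sum_{j\in\ind} \sum_{\substack{l\in\ind\\ n(l)=m}} k_l \bigl( \abs{z_j^{\kbf,\nu}}^2 - \abs{z_j^{\kbf,\nu-1}}^2 \bigr),
\]
and each summand can be factored as $(z_j^{\kbf,\nu}-z_j^{\kbf,\nu-1})\overline{z_j^{\kbf,\nu}} + z_j^{\kbf,\nu-1}\overline{(z_j^{\kbf,\nu}-z_j^{\kbf,\nu-1})}$, so that $\abs{\abs{z_j^{\kbf,\nu}}^2-\abs{z_j^{\kbf,\nu-1}}^2} \le \abs{z_j^{\kbf,\nu}-z_j^{\kbf,\nu-1}}\bigl(\abs{z_j^{\kbf,\nu}}+\abs{z_j^{\kbf,\nu-1}}\bigr)$.

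Next I would bring in the weight-absorbing estimate from part~(b) of the proof of Proposition~\ref{prop-almostinvariants}: for $\kbf\neq\mathbf 0$ with $j=j(\kbf)$ one has $\sum_{l} \abs{k_l}\abs{l}^{2s} \le \abs{j}^{2\widehat s}(\Ga^\kbf)^{2(s-\widehat s)}$ (with the extra factor $c_2^{s-2\widehat s}\de^N$ for near-resonant indices, but here we only need the crude bound). Combined with Lemma~\ref{lemma-nj} this lets one replace the factor $\max(1,m)^s\sum_{l:n(l)=m}\abs{k_l}$ by (a constant times) $\abs{j}^{2\widehat s}(\Ga^\kbf)^{2(s-\widehat s)}$, so that after a Cauchy--Schwarz step in $j$ and $\kbf$ the whole sum is controlled by
\[
\normv{\Gabf^{s-\widehat s}(\zbf^\nu-\zbf^{\nu-1})}_{\widehat s} \cdot \Bigl( \normv{\Gabf^{s-\widehat s}\zbf^\nu}_{\widehat s} + \normv{\Gabf^{s-\widehat s}\zbf^{\nu-1}}_{\widehat s} \Bigr).
\]
The first factor is $\le C\ep\de^N$ by Lemma~\ref{lemma-interface}, and the second factor is $\le C\ep$ by Lemma~\ref{lemma-size} (applicable on both the $(\nu-1)$th and $\nu$th intervals because of the assumed smallness $\norm{\xi^{(\nu-1)\widetilde n}}_s,\norm{\xi^{\nu\widetilde n}}_s\le 2\widehat c^{-1}\ep$). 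Multiplying gives the claimed bound $C\ep^2\de^N$.

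The only delicate point is bookkeeping: one must be careful that the near-resonant modulation functions, which were set to zero in~\eqref{eq-nearres} for each interval separately, do not spoil the factorisation — but since $z_j^{\kbf,\nu}=0$ and $z_j^{\kbf,\nu-1}=0$ whenever $(j,\kbf)$ is near-resonant for the respective interval, and near-resonance depends only on $\om_j$, $\omt$ and $h$ (not on the interval), the near-resonant indices coincide for the two intervals and those terms simply vanish. A second minor point is that Lemma~\ref{lemma-interface} is stated for $\zbf^\nu(\de t_{\nu\widetilde n})$ evaluated at the right endpoint of the $(\nu-1)$th interval, which is exactly the point $\de t_{\nu\widetilde n}$ appearing here, so no reparametrisation is needed. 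The main obstacle, such as it is, is organising the weighted sum so that the $\max(1,m)^s$ weight is distributed onto the $\Ga^\kbf$-rescalings and converted into the $\normv{\cdot}_{\widehat s}$-norms; but this is precisely the computation already carried out in the proof of Proposition~\ref{prop-almostinvariants}, so it can be quoted rather than redone.

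\begin{proof}
We abbreviate $\zbf=\zbf^\nu(\de t_{\nu\widetilde n})$ and $\zbf^{-}=\zbf^{\nu-1}(\de t_{\nu\widetilde n})$. Since the property of being near-resonant depends only on $(j,\kbf)$, $h$ and the (modified) frequencies and not on the time interval, the modulation functions $z_j^{\kbf,\nu}$ and $z_j^{\kbf,\nu-1}$ both vanish for near-resonant indices, and $z_j^{\kbf,\nu}=z_j^{\kbf,\nu-1}=0$ if $j\ne j(\kbf)$. By Lemma~\ref{lemma-invariant} applied on both intervals we may work directly with the difference of the $\Ic_m$. From~\eqref{eq-Ic},
\[
\Ic_m(\zbf) - \Ic_m(\zbf^{-}) = \sum_{\kbf} \sum_{j\in\ind} \sum_{\substack{l\in\ind\\ n(l)=m}} k_l \bigl( \abs{z_j^{\kbf,\nu}}^2 - \abs{z_j^{\kbf,\nu-1}}^2 \bigr),
\]
and hence, using $\abs{\abs{a}^2-\abs{b}^2}\le \abs{a-b}(\abs{a}+\abs{b})$,
\[
\absbig{\Ic_m(\zbf) - \Ic_m(\zbf^{-})} \le \sum_{\kbf} \sum_{j\in\ind} \sum_{\substack{l\in\ind\\ n(l)=m}} \abs{k_l} \, \absbig{z_j^{\kbf,\nu}-z_j^{\kbf,\nu-1}} \bigl(\abs{z_j^{\kbf,\nu}}+\abs{z_j^{\kbf,\nu-1}}\bigr).
\]
By part~(b) of the proof of Proposition~\ref{prop-almostinvariants} we have for $\kbf\ne\mathbf 0$ with $j=j(\kbf)$ and $s\ge 2\widehat s$ the bound $\sum_{l\in\ind}\abs{k_l}\abs{l}^{2s}\le \abs{j}^{2\widehat s}(\Ga^\kbf)^{2(s-\widehat s)}$; together with Lemma~\ref{lemma-nj} this gives, for $m=n(l)$ with $k_l\ne 0$,
\[
\max(1,m)^{s} \sum_{\substack{l\in\ind\\ n(l)=m}}\abs{k_l} \le C \abs{j}^{2\widehat s} (\Ga^\kbf)^{2(s-\widehat s)}.
\]
Summing over $m$, then applying the Cauchy--Schwarz inequality in the pair $(j,\kbf)$ and distributing the rescaling $(\Ga^\kbf)^{s-\widehat s}$ onto the two factors, we obtain
\[
\sum_{m\in\Nc} \max(1,m)^{s} \absbig{\Ic_m(\zbf) - \Ic_m(\zbf^{-})} \le C \, \normv{\Gabf^{s-\widehat s}(\zbf^\nu-\zbf^{\nu-1})}_{\widehat s} \Bigl( \normv{\Gabf^{s-\widehat s}\zbf^\nu}_{\widehat s} + \normv{\Gabf^{s-\widehat s}\zbf^{\nu-1}}_{\widehat s} \Bigr),
\]
where all modulation functions are evaluated at $\de t_{\nu\widetilde n}$. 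Under the assumed smallness of $\norm{\xi^{(\nu-1)\widetilde n}}_s$ and $\norm{\xi^{\nu\widetilde n}}_s$, Lemma~\ref{lemma-size} applies on both the $(\nu-1)$th and the $\nu$th interval and yields $\normv{\Gabf^{s-\widehat s}\zbf^\nu}_{\widehat s}+\normv{\Gabf^{s-\widehat s}\zbf^{\nu-1}}_{\widehat s}\le C\ep$, while Lemma~\ref{lemma-interface} gives $\normv{\Gabf^{s-\widehat s}(\zbf^\nu-\zbf^{\nu-1})}_{\widehat s}\le C\ep\de^{N}$. Multiplying these estimates proves the proposition.
\end{proof}
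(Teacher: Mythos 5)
Your proof is correct and follows essentially the same route the paper indicates: expanding the difference of the $\Ic_m$, factoring $\abs{a}^2-\abs{b}^2$, absorbing the $\max(1,m)^s$ weight via part~(b) of the proof of Proposition~\ref{prop-almostinvariants} together with Lemma~\ref{lemma-nj}, applying Cauchy--Schwarz to pass to the $\normv{\cdot}_{\widehat s}$-norms of the $\Gabf^{s-\widehat s}$-rescalings, and closing with Lemma~\ref{lemma-size} and Lemma~\ref{lemma-interface}. The only quibble is the reference to Lemma~\ref{lemma-invariant}, which is not needed here since you are comparing $\Ic_m$ at a single time rather than propagating it by the flow; that reference is harmless but superfluous.
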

\begin{proof}
This follows using part (b) of the proof of Proposition~\ref{prop-almostinvariants}, the Cauchy-Schwarz inequality and the estimates of Lemma~\ref{lemma-size}, Lemma~\ref{lemma-nj} and Lemma~\ref{lemma-interface}.
\end{proof}

\subsection{Long-time near-conservation of super-actions}\label{subsec-longtimenearcons}

We put the results of all previous subsections together to show near-conservation of the super-actions \eqref{eq-superactions} on \emph{long} time intervals of length $\de^{-N}$.

\begin{theorem}\label{thm-superactions}
The super-actions are nearly conserved for $\de\le\de_0$ and $s\ge s_0$:
\[
\sum_{m\in\Nc} \max(1,m)^{s} \absbig{ I_m(\xi^n) - I_m(\xi^0)} \le C\ep^2\de \myfor 0\le t_n=nh\le \de^{-N}
\]
with constants $C$, $s_0$ and $\de_0$.
\end{theorem}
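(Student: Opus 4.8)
The standard strategy for long-time near-conservation via modulated Fourier expansions is to \emph{patch together} the short-time estimates on consecutive intervals of length $\de^{-1}$, using the almost-invariance of $\Ic_m$ within each interval (Proposition~\ref{prop-almostinvariants}), the closeness of $\Ic_m$ to the super-actions $I_m$ at each endpoint (Proposition~\ref{prop-actionsinvariants}), and the near-continuity of $\Ic_m$ across interfaces (Proposition~\ref{prop-diffinvariants}). The only thing that is not immediate is that the smallness hypothesis $\norm{\xi^{\nu\widetilde n}}_s\le 2\widehat c^{-1}\ep$, which is needed to construct the modulated Fourier expansion on the $\nu$th interval, continues to hold as $\nu$ increases; this has to be bootstrapped together with the conservation estimate. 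So I would run a single induction on $\nu$.

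First I would fix $\de\le\de_0$ and $s\ge s_0$ small/large enough that all of Propositions~\ref{prop-error}, \ref{prop-almostinvariants}, \ref{prop-actionsinvariants}, \ref{prop-diffinvariants} apply, and I would choose a constant $C_\ast$ (depending on the constants appearing in those propositions, on $\widehat c$, $\widehat C$ of Lemma~\ref{lemma-norm-equivalent}, and on $\Nc$) to be specified. The induction hypothesis at level $\nu$ is: $\norm{\xi^{\mu\widetilde n}}_s\le 2\widehat c^{-1}\ep$ for all $0\le\mu\le\nu$, and
\[
\sum_{m\in\Nc}\max(1,m)^s\,\absbig{\Ic_m(\zbf^\nu(\de t_{\nu\widetilde n}))-\Ic_m(\zbf^0(0))}\le C_\ast\,\nu\,\ep^2\de^{N}.
\]
The base case $\nu=0$ is exactly the hypothesis $\norm{\xi^0}_0=\rh$, $\norm{\Fc_{\neg\ell}(\xi^0)}_s\le\ep$ translated via \eqref{eq-smallxi0} and Lemma~\ref{lemma-norm-equivalent} (after possibly enlarging the constant $\widehat c^{-1}$ to $2\widehat c^{-1}$). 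For the inductive step, assuming the hypothesis up to $\nu$, I first note that since $\norm{\xi^{(\nu-1)\widetilde n}}_s$ and $\norm{\xi^{\nu\widetilde n}}_s$ are both $\le 2\widehat c^{-1}\ep$, Proposition~\ref{prop-diffinvariants} gives
\[
\sum_{m\in\Nc}\max(1,m)^s\,\absbig{\Ic_m(\zbf^\nu(\de t_{\nu\widetilde n}))-\Ic_m(\zbf^{\nu-1}(\de t_{\nu\widetilde n}))}\le C\ep^2\de^{N},
\]
which combined with the induction hypothesis for $\nu-1$ (note $\Ic_m(\zbf^{\nu-1}(\de t_{(\nu-1)\widetilde n}))=\Ic_m(\zbf^{\nu-1}(\de t_{(\nu-1)\widetilde n}))$ and Proposition~\ref{prop-almostinvariants} bounds the drift of $\Ic_m$ along $\zbf^{\nu-1}$ over its own interval by $C\ep^2\de^N$) yields the conservation estimate at level $\nu$ with $C_\ast$ chosen $\ge 2C$ (one factor $C$ for the interface jump via Proposition~\ref{prop-diffinvariants}, one for the within-interval drift via Proposition~\ref{prop-almostinvariants}, both absorbed into $C_\ast\nu\ep^2\de^N$).

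It then remains to propagate the smallness: I combine the conservation estimate at level $\nu$ with Proposition~\ref{prop-actionsinvariants} (applied on the $\nu$th interval, valid because $\norm{\xi^{\nu\widetilde n}}_s\le 2\widehat c^{-1}\ep$) and at level $0$ to get
\[
\sum_{m\in\Nc}\max(1,m)^s\,\absbig{I_m(\xi^n)-I_m(\xi^0)}\le C\ep^2\de + C_\ast\,\nu\,\ep^2\de^{N}+C\ep^2\de
\]
for $\nu\widetilde n\le n\le(\nu+1)\widetilde n$; since $\sum_m\max(1,m)^s I_m(\xi)=\sum_{l\in\ind}\max(1,n(l))^s\abs{\xi_l}^2$ is, by Lemma~\ref{lemma-nj}, equivalent to $\norm{\xi^n}_s^2$ up to constants depending only on $\ell$ and $s$, this controls $\norm{\xi^n}_s^2\le (\text{const})\,\norm{\xi^0}_s^2+C\ep^2\de+C_\ast\nu\ep^2\de^N$. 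As long as $\nu\le\de^{-N}/\widetilde n\sim\de^{-(N-1)}$ we have $\nu\ep^2\de^N\le\ep^2\de$, so the right-hand side is $\le (\text{const})\,\ep^2$, which after taking square roots and absorbing constants gives $\norm{\xi^{(\nu+1)\widetilde n}}_s\le 2\widehat c^{-1}\ep$ provided $\ep\le\ep_0$ is small enough — closing the induction. Finally, one collects: for $n\le\de^{-N}$ write $n$ between two interface times $\nu\widetilde n$ and $(\nu+1)\widetilde n$ with $\nu\le\de^{-(N-1)}$, and the displayed bound above is exactly $\sum_{m}\max(1,m)^s\absbig{I_m(\xi^n)-I_m(\xi^0)}\le C\ep^2\de$ after renaming $C$.

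**Main obstacle.** The one delicate point is that the constants $C_\ast$, $\de_0$, $\ep_0$ must be chosen in the right order: $C_\ast$ is fixed by the propositions first (it must dominate the interface jump and the within-interval drift, uniformly in $\nu$); then $\de_0$ is shrunk so that $C_\ast\de_0^{N-1}\le 1$ (so that $\nu\de^N\le\de$ throughout) and so that all the propositions apply; finally $\ep_0$ is shrunk, as a function of everything, so that the quadratic-in-$\ep$ error $C\ep^2$ is swallowed by the room between $\norm{\xi^0}_s\le\widehat c^{-1}\ep$ and the allowed $2\widehat c^{-1}\ep$ — this last step is where one must be careful that the bootstrap constant ($2$) is not consumed, which works because the error is $O(\ep^2\de)$, a genuine power of $\de$ better than $O(\ep^2)$. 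Everything else is bookkeeping with the already-established propositions.
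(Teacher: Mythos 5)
Your proposal is correct and follows essentially the same approach as the paper: an induction on the interval index $\nu$ that telescopes the interface jumps (Proposition~\ref{prop-diffinvariants}), the within-interval drift of $\Ic_m$ (Proposition~\ref{prop-almostinvariants}) and the conversion between $\Ic_m$ and $I_m$ (Proposition~\ref{prop-actionsinvariants}), while simultaneously bootstrapping the smallness $\norm{\xi^{\nu\widetilde n}}_s\le 2\widehat c^{-1}\ep$ via Lemma~\ref{lemma-nj}, with the restriction $\nu\lesssim\de^{-(N-1)}$ ensuring $\nu\de^N\le\de$. The paper organizes the telescoping slightly differently (as a single displayed chain of inequalities rather than an explicit induction hypothesis on $\Ic_m(\zbf^\nu)-\Ic_m(\zbf^0)$), but the decomposition, the propositions invoked, and the order in which the constants $C$, $\de_0$, $\ep_0$ are fixed are the same.
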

\begin{proof}
Let $C/5$ be the maximum of the constant of Proposition~\ref{prop-diffinvariants} and the constants that appear in Propositions~\ref{prop-almostinvariants} and~\ref{prop-actionsinvariants} if $\widehat{c}^{-1}$ in \eqref{eq-smallxi0} is replaced by $2\widehat{c}^{-1}$. 

With this constant $C$, we prove the theorem for $\nu\de^{-1} \le t_n\le (\nu+1)\de^{-1}$ and $\nu=0,1,2,\ldots$ by induction on $\nu$. 
The main observation is that for $\nu\de^{-1} \le t_n\le (\nu+1)\de^{-1}$ and with $\widetilde{n}=1/(\de h)$ as in~\eqref{eq-ntilde}
\begin{equation}\label{eq-longaux}\begin{split}
\absbig{ I_m(\xi^n) - I_m(\xi^0)} &\le \absbig{I_m(\xi^n) - \Ic_m(\zbf^\nu(\de t_n))} + \absbig{\Ic_m(\zbf^\nu(\de t_n)) - \Ic_m(\zbf^\nu(\de t_{\nu\widetilde{n}}))} \\
&\qquad\qquad + \sum_{\widetilde{\nu}=1}^\nu \absbig{ \Ic_m(\zbf^{\widetilde{\nu}}(\de t_{\widetilde{\nu}\widetilde{n}})) - \Ic_m(\zbf^{\widetilde{\nu}-1}(\de t_{\widetilde{\nu}\widetilde{n}}))} \\
&\qquad\qquad + \sum_{\widetilde{\nu}=1}^\nu \absbig{ \Ic_m(\zbf^{\widetilde{\nu}-1}(\de t_{\widetilde{\nu}\widetilde{n}})) - \Ic_m(\zbf^{\widetilde{\nu}-1}(\de t_{(\widetilde{\nu}-1)\widetilde{n}})) } \\
&\qquad\qquad + \absbig{\Ic_m(\zbf^0(0)) - I_m(\xi^0) }.
\end{split}\end{equation}
After multiplying by $\max(1,m)^{s}$ and summing over $m\in\Nc$ we can apply Propositions~\ref{prop-almostinvariants}, \ref{prop-actionsinvariants} and \ref{prop-diffinvariants} to the different terms in \eqref{eq-longaux}, since, in case $\nu\ge 1$, the induction hypothesis implies for $0\le n\le \nu\widetilde{n}$
\[
\norm{\xi^n}_s^2 \le \norm{\xi^0}_s^2 + C_1^s \sum_{m\in\Nc} \max(1,m)^{s} \absbig{ I_m(\xi^n) - I_m(\xi^0)} \le \widehat{c}^{-2} \ep^2 +  C_1^s C\ep^2\de \le 2\widehat{c}^{-2} \ep^2
\]
provided that $\de\le 1/(C_1^s C\widehat{c}^2)$ with the constant $C_1$ of Lemma~\ref{lemma-nj}.
This gives
\[
\sum_{m\in\Nc} \max(1,m)^{s} \absbig{ I_m(\xi^n) - I_m(\xi^0)} \le \sfrac{2}{5}C\ep^2 \de + \sfrac{1}{5}C(2\nu+1)\ep^2\de^{N}.
\]
The statement of the theorem follows for $\nu \le \de^{-N+1}$, i.e., $t_n\le \de^{-N}$.
\end{proof}

\subsection{Proof of Theorem~\ref{thm-main}}\label{subsec-proof1}

In order to complete the proof of Theorem~\ref{thm-main}, we go in a final step back from the new variables $\xi$ introduced in Sect.~\ref{sec-trf} to the original variables $u$, in which the split-step Fourier method \eqref{eq-splitting} is formulated. Under the conditions and with the constant $C$ of Theorem~\ref{thm-superactions}, we have for $0\le t_n=nh\le\de^{-N}$ and $\de\le 1/(C_1^s C\widehat{c}^2)$
\[
\norm{\xi^n}_s^2 \le \norm{\xi^0}_s^2 + C_1^s \sum_{m\in\Nc} \max(1,m)^{s} \absbig{ I_m(\xi^n) - I_m(\xi^0)} \le 2\widehat{c}^{-2} \ep^2
\]
with the constant $C_1$ of Lemma~\ref{lemma-nj}. By Lemma \ref{lemma-norm-equivalent}, this transfers to a statement in the original variables,
\[
\norm{\Fc_{\neg \ell}(u^m)}_s \le \widehat{C} \norm{\xi^n}_s \le \sqrt{2}\widehat{C}\widehat{c}^{-1} \ep \myfor 0\le t_n=nh\le\de^{-N},
\]
as claimed in Theorem~\ref{thm-main}.

\section{On the non-resonance condition}\label{sec-nonres}

In this section, we give the proof of Theorem~\ref{thm-main2} on a sufficient condition under which Assumptions~\ref{assum-linearstability} and \ref{assum-nonres} in Theorem~\ref{thm-main} hold. The first subsection deals with the (numerical) linear stability of Assumption~\ref{assum-linearstability}, while the remaining main part of this section is devoted to the non-resonance condition of Assumption~\ref{assum-nonres}. 

From now on, we let $\ell=0$. In this case, we have $n(j)=\abs{j}^2$, and the frequencies~\eqref{eq-freq} become
\begin{equation}\label{eq-freq3}
\om_j = \frac{\arccos\bigl(\cos(\abs{j}^2h) - h\la\rh^2\sin(\abs{j}^2h)\bigr)}{h \sgn\bigl(\sin(\abs{j}^2h) +h\la\rh^2\cos(\abs{j}^2h)\bigr)}
\end{equation}
for $j\in\ind$. We introduce the set of possible values of $n(j)$: 
\[
\Nc = \{\, n(j) : j\in\ind \,\} = \{\, \abs{j}^2 : j\in\ind \,\}.
\]

\subsection{Linear stability}\label{subsec-linearstability}

We show that Assumption~\ref{assum-linearstability} is fulfilled, for $\ell=0$,  under the conditions \eqref{eq-linearstabilityexact} and \eqref{eq-strongcfl} of Theorem~\ref{thm-main2}.

\begin{lemma}\label{lemma-stab-numericalvsanalytical}
Under the step-size restriction \eqref{eq-strongcfl} the condition \eqref{eq-linearstabilityexact} of analytical linear stability implies the condition \eqref{eq-linearstability} of numerical linear stability for $0\le\rh\le\rh_0$ with $c_1=c_1(\rh_0)$. Moreover,  
\begin{equation}\label{eq-linearstabilitystrong}
1 - h^2\rh^4 + \frac{2\la\rh^2}{\mu_n} \ge \min\bigl(\sfrac12, 1+2\la\rh_0^2\bigr) >0 \forall n\in\Nc
\end{equation}
with
\begin{equation}\label{eq-mu}
\mu_n = \frac{\sin(nh)}{h\cos(nh)} = \frac{\tan(nh)}{h}.
\end{equation}
\end{lemma}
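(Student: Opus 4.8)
The plan is to reduce Assumption~\ref{assum-linearstability} to the single scalar inequality~\eqref{eq-linearstabilitystrong} via an elementary trigonometric identity, and then to verify~\eqref{eq-linearstabilitystrong} by a short case distinction in the sign of~$\la$. For the reduction, fix $j\in\ind$ and write $n=n(j)=\abs{j}^2\ge1$ and $x=nh$. Since $\abs{j}^2\le dK^2$, the step-size restriction~\eqref{eq-strongcfl} together with $N\ge2$ gives $x\le dhK^2\le\frac{\pi}{N+1}\le\frac{\pi}{3}$, so that $\cos x>0$, $\sin x>0$, $\mu_n=\tan(x)/h>0$ and $\cot x=1/(h\mu_n)$. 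Expanding the square and using $\la^2=1$ one obtains the identity
\[
1-\bigl(\cos x-h\la\rh^2\sin x\bigr)^2=\sin^2 x\,\Bigl(1-h^2\rh^4+\frac{2\la\rh^2}{\mu_n}\Bigr),
\]
so~\eqref{eq-linearstability} is equivalent to a lower bound for the bracket, i.e.\ to~\eqref{eq-linearstabilitystrong}. Moreover, since $y\mapsto\sin(y)/y$ is decreasing on $(0,\pi)$ and $n\ge1$, one has $\sin^2 x\ge\bigl(\tfrac{3\sqrt3}{2\pi}\bigr)^2(nh)^2\ge\bigl(\tfrac{3\sqrt3}{2\pi}\bigr)^2 h^2$; hence, once~\eqref{eq-linearstabilitystrong} is known,~\eqref{eq-linearstability} follows with $c_1=\bigl(\tfrac{3\sqrt3}{2\pi}\bigr)^2\min\bigl(\tfrac12,1+2\la\rh_0^2\bigr)$, which depends only on~$\rh_0$.

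To prove~\eqref{eq-linearstabilitystrong}, consider first $\la=+1$: then $2\la\rh^2/\mu_n\ge0$, and~\eqref{eq-strongcfl} gives $h\rh^2\le h\rh_0^2\le\frac{\pi}{2(N+1)}\le\frac{\pi}{6}$, so the bracket is at least $1-h^2\rh^4\ge1-\pi^2/36>\tfrac12=\min(\tfrac12,1+2\la\rh_0^2)$. For $\la=-1$ I would invoke the classical inequality $y\cot y\le1-y^2/3$ on $(0,\pi)$: since $\tfrac1{\mu_n}=\tfrac{h\cos(nh)}{\sin(nh)}=\tfrac1n(nh)\cot(nh)$, this yields $\tfrac1{\mu_n}\le\tfrac1n-\tfrac{nh^2}{3}$, and hence
\[
1-h^2\rh^4+\frac{2\la\rh^2}{\mu_n}\ge1-\frac{2\rh^2}{n}+h^2\rh^2\Bigl(\frac{2n}{3}-\rh^2\Bigr).
\]
Condition~\eqref{eq-linearstabilityexact} reads $\rh_0^2<\tfrac12$ when $\la=-1$, so $\rh^2\le\rh_0^2<\tfrac12<\tfrac23\le\tfrac{2n}{3}$; the last summand is then nonnegative, and after dropping it and using $n\ge1$ the bracket is $\ge1-2\rh^2\ge1-2\rh_0^2=1+2\la\rh_0^2>0$, which is $\ge\min(\tfrac12,1+2\la\rh_0^2)$. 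This proves~\eqref{eq-linearstabilitystrong}, hence the lemma.

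The only genuinely delicate step is the focusing case: a crude bound $\mu_n\ge n$ (from $\tan x\ge x$) does not compensate the $-h^2\rh^4$ term when $\rh_0^2$ is close to $\tfrac12$, so the second-order term in $y\cot y=1-\tfrac{y^2}{3}-\cdots$ is genuinely needed. The CFL condition~\eqref{eq-strongcfl} enters precisely here and in the defocusing estimate: it guarantees $nh\le\pi/3<\pi$, so the $\cot$-expansion is valid, and it bounds $h\rh^2$. Everything else---the algebraic identity, the lower bound on $\sin^2 x$, and the defocusing case---is routine.
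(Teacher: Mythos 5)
Your argument is correct and mirrors the paper's proof: both reduce~\eqref{eq-linearstability} to~\eqref{eq-linearstabilitystrong} via the identity $1-(\cos x-h\la\rh^2\sin x)^2=\sin^2 x\,(1-h^2\rh^4+2\la\rh^2/\mu_n)$ and then bound $1/\mu_n$ using the expansion of $y\cot y$. The only cosmetic difference is that you apply $y\cot y\le1-y^2/3$ directly at $y=nh$, whereas the paper first passes to $n=1$ via $\cot(nh)\le\cot(h)$ and then expands at $y=h$; both yield~\eqref{eq-linearstabilitystrong}, and your explicit $c_1=(3\sqrt3/(2\pi))^2\min(\tfrac12,1+2\la\rh_0^2)$ fills in a constant the paper leaves implicit.
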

\begin{proof}
We note that for all $n\in\Nc$
\[
0 \le \frac{1}{\tan(nh)} \le \frac{1}{\tan(h)} \le \frac{1}{h} - \frac{h}{3} 
\]
by the step-size restriction \eqref{eq-strongcfl}. This yields for $\la=+1$
\[
1-h^2\rh^4 + \frac{2\la\rh^2}{\mu_n} \ge 1-h^2\rh^4
\]
and for $\la=-1$
\[
1-h^2\rh^4 + \frac{2\la\rh^2}{\mu_n} \ge 1 - h^2\rh^4 -2h\rh^2\Bigl( \frac{1}{h} - \frac{h}{3} \Bigr) \ge (1 - 2\rh^2) \Bigl(1+\frac{h^2\rh^2}{2}\Bigr).
\]
We hence get \eqref{eq-linearstabilitystrong} for $0\le \rh\le \rh_0$ from \eqref{eq-linearstabilityexact} and~\eqref{eq-strongcfl}.

The estimate~\eqref{eq-linearstabilitystrong} together with~\eqref{eq-strongcfl} implies that there exists $c_1=c_1(\rh_0)$ such that 
\[
c_1h^2 \le \sin(nh)^2 \Bigl( 1 - h^2\rh^4 + \frac{2\la\rh^2}{\mu_n} \Bigr) = 1 - \bigl(\cos(nh) - h\la\rh^2\sin(nh)\bigr)^2.
\]
for all $n\in\Nc$, and hence \eqref{eq-linearstability} holds.
\end{proof}

\subsection{Modified frequencies}\label{subsec-freqmod}

Now, we turn to Assumption~\ref{assum-nonres}, again for $\ell=0$. We begin by constructing the modified frequencies. The reason, why we use modified frequencies in the theory developed in the present paper, is that it seems to be very hard to verify the non-resonance condition in part (b) of Assumption~\ref{assum-nonres} directly for the frequencies $\om_j$ of \eqref{eq-freq3}. For the frequencies that show up after the linearization of the nonlinear Schr\"odinger equation itself around a plane wave, however, a suitable non-resonance condition can be established, see~\cite[Lemma~2.2]{Faoua}. These frequencies are $\sqrt{\abs{j}^4 + 2\la\rh^2\abs{j}^2}$, and we therefore seek modified frequencies of a similar form.

We fix $\rh_0> 0$ with \eqref{eq-linearstabilityexact} and $h$ and $K$ with \eqref{eq-strongcfl} for some $N\ge 2$ as in Theorem~\ref{thm-main2}. The frequencies $\om_j$ of \eqref{eq-freq3} are considered henceforth as functions of $\si =\rh^2$ with $0\le \si \le \si_0 := \rh_0^2$:
\begin{equation}\label{eq-freq2}
\om_j = \om_j(\sigma) = \frac{\arccos\bigl(\cos(\abs{j}^2h) - h\la\si\sin(\abs{j}^2h)\bigr)}{h}.
\end{equation}
(Note that the step-size restriction \eqref{eq-strongcfl} together with \eqref{eq-linearstabilitystrong} ensures that the sign of $\om_j$ in \eqref{eq-freq3} is positive.)

The derivative of the frequencies $\om_j$ with respect to $\si$ is given by
\[
\frac{\dd \om_j(\si)}{\dd\si} 
= \frac{\la}{\sqrt{1-h^2\si^2 + \frac{2\la\si}{\mu_{\abs{j}^2}}}}
\]
with $\mu_{\abs{j}^2}$ from \eqref{eq-mu} which is positive for $j\in\ind$ by~\eqref{eq-strongcfl}. 
This motivates the definition 
\begin{equation}\label{eq-freqmodomt}
\omt_j = \omt_j(\si) = \abs{j}^2 - \mu_{\abs{j}^2} + \sqrt{\mu_{\abs{j}^2}^2 + 2\la\si \mu_{\abs{j}^2}}, \qquad j\in\ind,
\end{equation}
of the \emph{modified frequencies} since
we then have
\[
\frac{\dd \omt_j(\si)}{\dd\si} = \frac{\la}{\sqrt{1 + \frac{2\la\si}{\mu_{\abs{j}^2}}}} \myand \omt_j(0) = \om_j(0).
\]
This implies
\[
\frac{\dd (\om_j - \omt_j)(\si)}{\dd\si} = \frac{\la h^2\si^2}{2(1-\xi + \frac{2\la\si}{\mu_{\abs{j}^2}})^{3/2}} 
\]
for all $j\in\ind$ and some $0\le\xi=\xi_j\le h^2\si^2$, and hence
\begin{equation}\label{eq-freqmodclosefreq}
\abs{\om_j - \omt_j} \le C_2 h^2 \forall j\in\disc
\end{equation}
with $C_2=C_2(\si_0)$ by \eqref{eq-linearstabilitystrong}. The modified frequencies $\omt_j$ are hence close to the original frequencies $\om_j$ as required in part (a) of Assumption~\ref{assum-nonres}.

\subsection{Bambusi's non-resonance condition for the modified frequencies}

We study resonances among the modified frequencies $\omt_j$ of \eqref{eq-freqmodomt} derived in the previous subsection. As $\omt_j=\omt_l$ for $\abs{j}^2=\abs{l}^2$, we introduce
\begin{equation}\label{eq-freqmod}
\Om_n =\Om_n(\si) = \omt_j(\si) \myfor n\in\Nc,\, j\in\ind \with n=\abs{j}^2.
\end{equation}
We verify for these modified frequencies a non-resonance condition that has been introduced by Bambusi and is widely used in the long-time analysis of infinite dimensional Hamiltonian systems. The verification is an adaptation of \cite[Sect.~5.1]{Bambusi2006} to the present situation along with some simplifications. 

We proceed roughly as follows. The aim is to show that there are a lot of ``good'' values of $\si$ for which linear combinations of frequencies do not become small. More precisely, a value of $\si$ is considered as ``good'' if for all vectors $\kbf\in\Z^{\Nc}$ and $\lbf\in\Z^{\Nc}$ with $\norm{\kbf}\le N$ and $\norm{\lbf}\le 2$ the linear combinations
\[
\kbf\cdot\Ombf + \lbf\cdot\Ombf = \sum_{n\in\Nc} k_n\Om_n + \sum_{n\in\Nc} l_n\Om_n
\]
are bounded away from zero by a negative power of $\argmax(\kbf)$, where we denote by $\argmax(\kbf)$ the largest index $n\in\Nc$ with $k_n\ne 0$ (and set $\argmax(\kbf)=1$ for $\kbf=\mathbf{0}$). The first step is to observe that it suffices to consider 
\[
\kbf\cdot\Ombf + m
\]
with integers $m$ instead of $\kbf\cdot\Ombf + \lbf\cdot\Ombf$, the reason being that $\lbf\cdot\Ombf=\pm\Om_n\pm\Om_{n'}$ is either close to an integer by the asymptotic behaviour $\Om_n\sim n+\la\si$ of the frequencies (see Lemma~\ref{lemma-asymptotic} below) or may be absorbed into $\kbf\cdot\Ombf$. This is done in Proposition~\ref{prop-bambusi} below. Furthermore, if one excludes some values of $\si$, a bound of $\kbf\cdot\Ombf + m$ can be obtained from a bound of some derivative
\[
\frac{\dd^k(\kbf\cdot\Ombf + m)}{\dd\si^k}
\]
of $\kbf\cdot\Ombf + m$ (Lemma~\ref{lemma-Pc}). Therefore we study in Lemma~\ref{lemma-det} a matrix made up of derivatives of the frequencies $\Om_n$. This matrix is such that its inverse multiplied with the vector containing the first derivatives of $\kbf\cdot\Ombf$ is just the vector containing the nonzero entries of $\kbf$. Bounding its inverse (see Lemma~\ref{lemma-det}) thus helps to study the derivatives of $\kbf\cdot\Ombf + m$.

As in the previous subsection we fix $\rh_0> 0$ with \eqref{eq-linearstabilityexact} and $h$ and $K$ with \eqref{eq-strongcfl} for some $N\ge 2$. Let us emphasize, however, that again all constants will be independent of the discretization parameters $h$ and $K$.  We will make extensive use of the asymptotic behaviour of $\mu_n$ from~\eqref{eq-mu}  and of the modified frequencies described in the following lemma.

\begin{lemma}\label{lemma-asymptotic}
We have for $0\le\rh\le\rh_0$
\[
n \le \mu_n \le C n  \myand -\frac{C}{n} \le \Om_n - n -\la\si \le 0 \myfor n\in\Nc
\]
with a constant $C=C(\si_0)$.
\end{lemma}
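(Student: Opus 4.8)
The plan is to exploit the step-size restriction~\eqref{eq-strongcfl}, which confines $nh$ to a compact subinterval of $(0,\pi/2)$ for every $n\in\Nc$, and then to Taylor-expand the square root in the definition~\eqref{eq-freqmodomt} of $\Om_n$.

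First I would localize $nh$: for $j\in\ind$ one has $n=\abs{j}^2\le dK^2$, so~\eqref{eq-strongcfl} together with $N\ge 2$ gives $0<nh\le dhK^2\le \pi/(N+1)\le\pi/3$ for all $n\in\Nc$. Since $x\mapsto\tan(x)/x$ is increasing on $(0,\pi/2)$, this yields $x\le\tan(x)\le\tfrac{3\sqrt3}{\pi}\,x$ for $x\in(0,\pi/3]$; evaluating at $x=nh$ and dividing by $h$ produces the first claimed estimate $n\le\mu_n\le C n$ (with a constant that is here even independent of $\si_0$).

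For the second estimate I would set $t:=2\la\si/\mu_n$ and write $\Om_n-n=\mu_n\bigl(\sqrt{1+t}-1\bigr)$. The crucial preliminary observation is that $1+t\ge\min(\tfrac12,1+2\la\si_0)>0$ by~\eqref{eq-linearstabilitystrong} of Lemma~\ref{lemma-stab-numericalvsanalytical}, so the square root is real and has a remainder controllable uniformly in $n$, $h$, $K$ and $0\le\si\le\si_0$. The upper bound $\Om_n-n-\la\si\le 0$ then follows at once from the concavity inequality $\sqrt{1+t}-1\le t/2$, since $\mu_n\cdot t/2=\la\si$. For the lower bound I would use the elementary identity $\sqrt{1+t}-1-\tfrac t2=-\tfrac{t^2}{2(\sqrt{1+t}+1)^2}$, which combined with $\sqrt{1+t}+1\ge1$, $t^2=4\si^2/\mu_n^2$ and $\mu_n\ge n$ gives $\Om_n-n-\la\si=-\tfrac{2\si^2}{\mu_n(\sqrt{1+t}+1)^2}\ge-\tfrac{2\si_0^2}{n}$. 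The lemma then holds with $C=\max\bigl(\tfrac{3\sqrt3}{\pi},\,2\si_0^2\bigr)$, which depends only on $\si_0=\rh_0^2$.

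I do not expect a genuine obstacle here; the one point deserving care is the uniform positivity $1+t\ge\text{const}>0$, for which~\eqref{eq-linearstabilitystrong} is exactly tailored (equivalently, one can argue directly that $\mu_n+2\la\si\ge n+2\la\si_0\ge1+2\la\rh_0^2>0$ when $\la=-1$, using $\mu_n\ge n\ge1$, while for $\la=+1$ it is trivial).
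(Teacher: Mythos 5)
Your proof is correct and follows essentially the same route as the paper's: localize $nh$ via the CFL condition~\eqref{eq-strongcfl} to control $\mu_n$, then Taylor-expand the square root in the definition~\eqref{eq-freqmodomt} of $\Om_n$. The only cosmetic difference is the algebraic handling of the remainder: the paper writes $\Om_n-n-\la\si=\sqrt{(\mu_n+\la\si)^2-\si^2}-(\mu_n+\la\si)$ and uses~\eqref{eq-linearstabilitystrong} to bound $\sqrt{1+2\la\si/\mu_n}$ away from zero in the denominator, whereas your identity $\sqrt{1+t}-1-\tfrac t2=-t^2/\bigl(2(\sqrt{1+t}+1)^2\bigr)$ lets you simply drop the denominator via $\sqrt{1+t}+1\ge1$, a marginally cleaner step that uses~\eqref{eq-linearstabilitystrong} only to guarantee $1+t\ge0$.
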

\begin{proof}
The estimates of $\mu_n$ follow from $nh\le \tan(nh) \le C nh$ by~\eqref{eq-strongcfl}. For the estimates of $\Om_n$ we note that
\[
\Om_n - n - \la\si = \sqrt{\bigl(\mu_n + \la\si\bigr)^2 - \si^2} - \bigl(\mu_n + \la\si\bigr).
\]
This shows 
\[
0\ge \Om_n - n - \la\si \ge \frac{-\si_0^2}{2\mu_n\sqrt{1+\frac{2\la\si}{\mu_n}}}.
\]
The estimate~\eqref{eq-linearstabilitystrong} of Lemma~\ref{lemma-stab-numericalvsanalytical} and $\mu_n\ge n$ thus lead to the claimed lower bound of $\Om_n - n - \la\si$.
\end{proof}

Now we begin with the investigation of (integer) linear combinations $\kbf\cdot\Ombf = \sum_{n\in\Nc} k_n\Om_n$ of modified frequencies \eqref{eq-freqmod}. The following lemma will help us to control derivatives of these linear combinations with respect to $\si$, which in turn will allow us to control the linear combinations themselves.

\begin{lemma}\label{lemma-det}
Let $1\le n_1<n_2<\dots<n_M$, and let $A=(a_{kl})_{k,l=1}^M$ be the matrix with entries
\[
a_{kl} = \frac{\dd^k\Om_{n_l}(\si)}{\dd\si^k}.
\]
Then for all $k,l=1,\dots,M$ and all $0\le \si\le\si_0$
\[
\abs{a_{kl}} \le C n_l^{-k+1} \myand \norm{A^{-1}}_{\infty} \le C n_M^{2M}
\]
with a constant $C=C(M,\si_0)$.
\end{lemma}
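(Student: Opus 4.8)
The entries $a_{kl} = \tfrac{\dd^k}{\dd\si^k}\Om_{n_l}(\si)$ can be computed explicitly from the formula \eqref{eq-freqmodomt}, which gives $\Om_n(\si) = n - \mu_n + \sqrt{\mu_n^2 + 2\la\si\mu_n}$. Writing $g(\si) = \sqrt{\mu_n^2 + 2\la\si\mu_n} = \mu_n\sqrt{1 + 2\la\si/\mu_n}$, we have $\tfrac{\dd^k g}{\dd\si^k} = c_k \mu_n\,(2\la/\mu_n)^k (1+2\la\si/\mu_n)^{1/2-k}$ for $k\ge 1$, with combinatorial constants $c_k$ depending only on $k$ (the $k$-th derivative of $x\mapsto\sqrt{1+x}$). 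So for $k\ge 1$,
\[
a_{kl} = \frac{\dd^k\Om_{n_l}}{\dd\si^k} = c_k (2\la)^k \mu_{n_l}^{1-k} \Bigl(1 + \frac{2\la\si}{\mu_{n_l}}\Bigr)^{1/2-k}.
\]
By Lemma~\ref{lemma-asymptotic} and the step-size restriction \eqref{eq-strongcfl} (which gives $n_l\le\mu_{n_l}\le Cn_l$) together with \eqref{eq-linearstabilitystrong} (which bounds $1+2\la\si/\mu_{n_l}$ away from zero and above), one gets $|a_{kl}|\le C\mu_{n_l}^{1-k}\le C n_l^{1-k}$, which is the first claimed bound. (For $k\ge 1$ the powers of $n_l$ that could grow, namely $\mu_{n_l}^{1-k}$ with $k\ge 1$, are in fact $\le 1$.)

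For the bound on $\norm{A^{-1}}_\infty$ the natural route is Cramer's rule: $\norm{A^{-1}}_\infty \le M\,\max_{kl}|(\mathrm{adj}\,A)_{kl}|/|\det A| = M\,\max |{\text{cofactor}}|/|\det A|$. The cofactors are $(M-1)\times(M-1)$ minors of $A$, each bounded by $(M-1)!\prod_l C n_l^{1-k}$-type products; since all the exponents $1-k$ are $\le 0$, each such minor is bounded by a constant $C(M,\si_0)$. So the whole estimate reduces to a lower bound $|\det A|\ge c\, n_M^{-2M}$, or more precisely (after pulling out the easy factors) a lower bound for a Vandermonde-type determinant. Explicitly, factor $a_{kl} = c_k(2\la)^k \mu_{n_l}^{1-k}(1+2\la\si/\mu_{n_l})^{1/2-k}$: pull the row factor $c_k(2\la)^k$ out of row $k$, and pull a column factor out of column $l$. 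After this,
\[
\det A = \Bigl(\prod_{k=1}^M c_k(2\la)^k\Bigr)\Bigl(\prod_{l=1}^M \beta_l\Bigr)\det\Bigl( t_l^{\,k-1}\Bigr)_{k,l=1}^M,
\]
where $t_l := \mu_{n_l}^{-1}(1+2\la\si/\mu_{n_l})^{-1}$ and $\beta_l$ collects the remaining per-column factors (a bounded nonzero quantity times $\mu_{n_l}^{-?}$). The last determinant is a Vandermonde determinant $\prod_{k<l}(t_l - t_k)$.

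The crux is therefore a lower bound on $\prod_{k<l}|t_l-t_k|$. Since $n_1<n_2<\dots<n_M$ are distinct integers and $\mu_n$ is strictly increasing in $n$ with $n\le\mu_n\le Cn$, the $t_l$ are distinct, and one needs $|t_l - t_k|\ge c\, n_M^{-2}$ or so for each pair. Here $t_l - t_k = \mu_{n_l}^{-1}(1+2\la\si/\mu_{n_l})^{-1} - \mu_{n_k}^{-1}(1+2\la\si/\mu_{n_k})^{-1}$; writing this as a difference of values of the smooth function $\phi(\mu) = \mu^{-1}(1+2\la\si/\mu)^{-1} = (\mu + 2\la\si)^{-1}$ and using the mean value theorem gives $t_l - t_k = -(\mu_{n_l}-\mu_{n_k})(\mu^* + 2\la\si)^{-2}$ for some $\mu^*$ between $\mu_{n_k}$ and $\mu_{n_l}$; since $\mu_{n_l}-\mu_{n_k}\ge \mu_{n_{k+1}}-\mu_{n_k}\ge$ (something like) $n_{k+1}-n_k\ge 1$ by monotonicity on integers (or more carefully, $\mu$ has derivative bounded below so $\mu_{n_l}-\mu_{n_k}\ge c(n_l-n_k)\ge c$), and $\mu^*+2\la\si\le Cn_M$, we obtain $|t_l - t_k|\ge c\, n_M^{-2}$. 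Hence $\prod_{k<l}|t_l-t_k|\ge c\, n_M^{-M(M-1)}\ge c\,n_M^{-M^2}$. Combining with the column factors $\beta_l$ (each $\ge c\,\mu_{n_l}^{-O(1)}\ge c\,n_M^{-O(M)}$) gives $|\det A|\ge c\, n_M^{-2M}$ after absorbing constants into $C(M,\si_0)$, and then $\norm{A^{-1}}_\infty\le C n_M^{2M}$ follows from Cramer's rule.

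The main obstacle is the lower bound on $|\det A|$, i.e.\ quantifying the Vandermonde product $\prod_{k<l}|t_l - t_k|$ from below by a controlled negative power of $n_M$. The essential inputs are: (i) the $t_l$ are separated (because $\mu_n$ is monotone in the integer variable $n$ and its difference quotient is bounded below, via \eqref{eq-strongcfl}), and (ii) all quantities $\mu_{n_l}$, $(1+2\la\si/\mu_{n_l})$ are comparable to $n_l$ and bounded away from $0$ and $\infty$, via Lemma~\ref{lemma-asymptotic} and \eqref{eq-linearstabilitystrong}. Everything else — pulling out row/column factors, Cramer's rule, bounding cofactors — is bookkeeping. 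A minor technical point is keeping track of exactly which negative power of $n_M$ emerges; one should not aim for the optimal exponent, only for one bounded by $2M$ (or some fixed polynomial in $M$), which comfortably suffices for the later use of the lemma.
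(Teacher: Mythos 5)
Your computation of the entries $a_{kl}$ is the same as the paper's (the paper writes $a_{kl} = d_k e_l x_l^{k-1}$ with $e_l=(1+2\la\si/\mu_{n_l})^{-1/2}$, $x_l=e_l^2/\mu_{n_l}=(\mu_{n_l}+2\la\si)^{-1}$, which is your $t_l$, and the entry bound follows just as you say). For the inverse, however, you and the paper take genuinely different routes, and your exponent bookkeeping contains an error that means the stated bound is not actually reached.

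The issue is the claim $\abs{\det A}\ge c\,n_M^{-2M}$. After pulling out the row factors $d_k$ and column factors $\beta_l$ (both bounded above and below by constants depending only on $M,\si_0$, via \eqref{eq-linearstabilitystrong} and Lemma~\ref{lemma-asymptotic}), you are left with the full Vandermonde product $\prod_{k<l}(t_l-t_k)$, which has $\binom{M}{2}=M(M-1)/2$ factors. Each factor is bounded below by $c\,n_M^{-2}$ (via the mean-value argument, or directly from $t_l-t_k = t_k t_l(\mu_{n_k}-\mu_{n_l})$ with $\abs{\mu_{n_k}-\mu_{n_l}}\ge\abs{n_k-n_l}\ge 1$ and $t_k,t_l\ge c/n_M$). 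This gives $\abs{\det A}\ge c\,n_M^{-M(M-1)}$, not $n_M^{-2M}$. Since $M(M-1)>2M$ for $M\ge 4$, combining this with your cofactor bound $\abs{\text{cofactor}}\le C(M,\si_0)$ via Cramer's rule yields only $\norm{A^{-1}}_\infty\le C\,n_M^{M(M-1)}$, which is strictly weaker than the stated $C\,n_M^{2M}$.

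The paper avoids this loss by writing $A=DVE$ and inverting the Vandermonde factor $V$ \emph{explicitly}: $(V^{-1})_{ij}=v_{ij}/w_i$ with $v_{ij}$ an elementary symmetric polynomial in $\{x_l:l\ne i\}$ (bounded by a constant) and $w_i=\prod_{l\ne i}(x_i-x_l)$, a product of only $M-1$ factors. The point is that in the exact expression for the inverse, almost all of the $\binom{M}{2}$ factors in the Vandermonde determinant cancel against the same factors hidden in the cofactor, and only the $M-1$ factors involving $x_i$ survive. Crude Cramer's rule (bounding numerator and denominator separately) throws away this cancellation, which is exactly why you land at $M(M-1)$ instead of $2(M-1)\le 2M$. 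As you yourself observe, for the downstream application (Lemma~\ref{lemma-Pc} and Proposition~\ref{prop-bambusi}) any fixed polynomial in $M$ would do, since $M$ is bounded by $N+2$; so one could patch things by restating the lemma with exponent $M(M-1)$ and adjusting $s_2$ and $\al$ accordingly. But to obtain the lemma as stated you should use the explicit Vandermonde inverse, not Cramer's rule with separately estimated numerator and denominator, and the claim $\abs{\det A}\ge c\,n_M^{-2M}$ should be corrected to $\abs{\det A}\ge c\,n_M^{-M(M-1)}$.
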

\begin{proof}
We have 
\begin{equation}\label{eq-auxderfreq}
a_{kl} = \frac{\dd^k\Om_{n_l}(\si)}{\dd\si^k} = d_k  e_l x_l^{k-1} 
\end{equation}
with $d_1 = \la$ and $d_{k+1} = -\la(2k-1)d_{k}$ for $k\ge 1$, $e_l = 1/\sqrt{1 + 2\la\si/\mu_{n_l}}$ and $x_l = e_l^2/\mu_{n_l}$. Note that for all $k,l=1,\dots,M$
\begin{equation}\label{eq-auxbounds}
c' \le \abs{d_k}\le C', \quad c'\le e_l\le C' \myand \frac{c'}{n_l}\le x_l\le \frac{C'}{n_l}
\end{equation}
with positive constants $c'=c'(\si_0)$ and $C'=C'(M,\si_0)$ by Lemmas~\ref{lemma-stab-numericalvsanalytical} and~\ref{lemma-asymptotic}. Hence, the bound on the entry $a_{kl}$ as stated in the lemma follows from the representation \eqref{eq-auxderfreq}. 

Moreover, this representation shows that
\[
A = D V E
\]
with the diagonal matrices $D=\diag(d_k)_{k=1}^M$ and $E = \diag(e_l)_{l=1}^M$ and the Vandermonde matrix $V=(x_l^{k-1})_{k,l=1}^M$. In order to examine the inverse of $A$, we first invert $V$. Its inverse is given by
\[
V^{-1} = \Bigl(\frac{v_{ij}}{ w_i}\Bigr)_{i,j=1}^M
\]
with
\[
v_{ij} = \sum_{\substack{1\le l_1<\dots<l_{M-j}\le M\\ l_1,\dots,l_{M-j}\ne i}} (-1)^j x_{l_1}\dotsm x_{l_{M-j}} \myand w_i = \prod_{\substack{1\le l \le M\\ l\ne i}} (x_i-x_l),
\]
see for example \cite[Sect.~2.8.1]{Press2007}. Since 
\[
\abs{x_i-x_l} = x_ix_l \abs{\mu_{n_l} - \mu_{n_i}} = x_ix_l \frac{\abs{n_l - n_i}}{\cos^2(\xi h)}
\]
with $\min(n_i,n_l)\le \xi\le \max(n_i,n_l)$, 
the bounds \eqref{eq-auxbounds} and the step-size restriction \eqref{eq-strongcfl} imply
\[
\norm{V^{-1}}_{\infty} \le C n_M^{2M}, \quad \norm{D^{-1}}_{\infty}\le C \myand \norm{E^{-1}}_{\infty}\le C
\]
with $C=C(M,\si_0)$. The estimate of $\norm{A^{-1}}_{\infty}$ stated in the lemma follows.
\end{proof}

Now we consider sets of values of $\si$ for which linear combinations of modified frequencies are small. We define for vectors $\kbf\in\Z^\Nc$ and $\lbf\in\Z^\Nc$ and integers $m$ the sets
\begin{equation}\label{eq-Qc}
\Qc_{\kbf,\lbf,m}(\gamma,\alpha) = \biggl\{\, \si\in [0,\si_0] : \absbig{(\kbf+\lbf)\cdot\Ombf(\si) + m} < \frac{\gamma}{\argmax(\kbf)^{\alpha}} \,\biggr\}.
\end{equation}
We first estimate the Lebesgue measure $\abs{\cdot}$ of these sets in the case $\lbf=\mathbf{0}$. 

\begin{lemma}\label{lemma-Pc}
There exists a constant $C=C(N,\al,\si_0)$ such that for all $0<\ga\le 1$, all $\norm{\kbf}\le N$ and all $m\in\Z$ with $\norm{k}+\abs{m}\ne 0$ 
\[
\abs{\Qc_{\kbf,\mathbf{0},m}(\gamma,\alpha)} \le \frac{C \gamma^{1/M}}{\argmax(\kbf)^{\al/M-4M}},
\]
where $M$ denotes the number of nonzero entries of $\kbf$.
\end{lemma}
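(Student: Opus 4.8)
The plan is to estimate the measure of the bad set $\Qc_{\kbf,\mathbf{0},m}(\ga,\al)$ by showing that on this set some derivative $\dd^k(\kbf\cdot\Ombf+m)/\dd\si^k$ with $1\le k\le M$ stays bounded away from zero, and then invoking a standard sublevel-set estimate for functions with a non-degenerate derivative. First I would enumerate the nonzero entries of $\kbf$: write $k_{n_1},\dots,k_{n_M}$ for them with $1\le n_1<\dots<n_M=\argmax(\kbf)$. Then $\kbf\cdot\Ombf(\si)=\sum_{l=1}^M k_{n_l}\Om_{n_l}(\si)$, and its first $M$ derivatives with respect to $\si$ are $\bigl(\dd^k(\kbf\cdot\Ombf)/\dd\si^k\bigr)_{k=1}^M = A\,(k_{n_1},\dots,k_{n_M})^{\mathsf T}$, where $A$ is exactly the matrix of Lemma~\ref{lemma-det}. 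Since the integer vector $(k_{n_1},\dots,k_{n_M})$ has a component of modulus $\ge 1$, we get from $\norm{A^{-1}}_\infty\le Cn_M^{2M}$ that
\[
\max_{1\le k\le M}\absBig{\frac{\dd^k(\kbf\cdot\Ombf)}{\dd\si^k}} \ge \frac{1}{\norm{A^{-1}}_\infty}\,\max_l\abs{k_{n_l}} \ge \frac{1}{C n_M^{2M}} = \frac{1}{C\,\argmax(\kbf)^{2M}}.
\]
Note the $+m$ is killed by a single differentiation, so this is a lower bound for $\max_{1\le k\le M}\abs{\dd^k(\kbf\cdot\Ombf+m)/\dd\si^k}$ as well; the edge case $M=0$ (so $\kbf=\mathbf 0$ and $\abs m\ge 1$) makes $\abs{\kbf\cdot\Ombf+m}=\abs m\ge 1>\ga$, so the bad set is empty and there is nothing to prove.

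Next I would apply a quantitative sublevel-set lemma of the following type: if $g\in C^M[0,\si_0]$ and $\max_{1\le k\le M}\abs{g^{(k)}(\si)}\ge \kappa$ for every $\si\in[0,\si_0]$, then $\abs{\{\si\in[0,\si_0]:\abs{g(\si)}<\eta\}}\le C_M(\si_0)\,(\eta/\kappa)^{1/M}$. (This is the standard van~der~Corput / Łojasiewicz-type estimate; it follows by an induction on $M$ splitting $[0,\si_0]$ into the region where $\abs{g^{(M)}}$ dominates and the complement, using that $g^{(k)}$ is monotone on intervals where $g^{(k+1)}$ has a fixed sign.) To use this with a fixed $\kappa$ I would first check that the hypothesis holds pointwise, not just "somewhere": the derivative computation above, via $\norm{A^{-1}}_\infty$ from Lemma~\ref{lemma-det}, is valid for every $\si\in[0,\si_0]$, so indeed $\max_{1\le k\le M}\abs{g^{(k)}(\si)}\ge \kappa:=1/(C\argmax(\kbf)^{2M})$ uniformly on $[0,\si_0]$. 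Applying the sublevel lemma with $\eta=\ga/\argmax(\kbf)^\al$ then yields
\[
\abs{\Qc_{\kbf,\mathbf 0,m}(\ga,\al)} \le C_M(\si_0)\biggl(\frac{\ga/\argmax(\kbf)^\al}{1/(C\argmax(\kbf)^{2M})}\biggr)^{1/M} = C\,\ga^{1/M}\,\argmax(\kbf)^{-\al/M+2M}\cdot\argmax(\kbf)^{?},
\]
and bookkeeping the exponents (the factor $C\argmax(\kbf)^{2M}$ inside the $1/M$-power contributes $\argmax(\kbf)^{2}$, not $\argmax(\kbf)^{2M/M}$ — one must be careful here) gives the exponent $-\al/M+2M+2\le -\al/M+4M$ for $M\ge1$, absorbing the slack into the stated $4M$. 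Since $0<\ga\le1$ and $M\le N$, all constants depend only on $N$, $\al$ and $\si_0$ as claimed.

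The main obstacle I expect is the uniform-in-$\si$ non-degeneracy of the Vandermonde-type matrix $A$, i.e.\ the bound $\norm{A^{-1}}_\infty\le Cn_M^{2M}$ — but this is precisely the content of Lemma~\ref{lemma-det}, which I may assume, so the remaining work is essentially the (standard but slightly delicate) sublevel-set estimate and the careful tracking of powers of $\argmax(\kbf)$. A secondary point needing a little care is that $g=\kbf\cdot\Ombf+m$ must be $C^M$ with derivatives of the size asserted; this follows from the explicit formula \eqref{eq-freqmodomt} for $\omt_j$ and the fact that $\mu_{\abs j^2}$ stays in a compact interval bounded away from the singularities of $\tan$ by the CFL condition \eqref{eq-strongcfl}, exactly as used already in the proof of Lemma~\ref{lemma-det}.
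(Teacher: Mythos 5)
Your proof matches the paper's own argument: both obtain the pointwise lower bound $\max_{1\le k\le M}\abs{g^{(k)}(\si)} \ge c\,\argmax(\kbf)^{-2M}$ from the $\norm{A^{-1}}_\infty$ estimate of Lemma~\ref{lemma-det} and then feed it into a sublevel-set estimate (the paper invokes Lemma~8.4 of Bambusi's 1999 Nonlinearity paper for this last step). Two small points to tighten: the sublevel estimate you quote also needs a uniform \emph{upper} bound on the first $M+1$ derivatives of $g$ for the constant $C_M(\si_0)$ to be independent of $g$ --- this is supplied by the first assertion $\abs{a_{kl}}\le C n_l^{-k+1}$ of Lemma~\ref{lemma-det} and is explicitly used in the paper's step~(b), so it should enter as a hypothesis rather than be mentioned only in your closing paragraph; and your exponent bookkeeping should read $(\argmax(\kbf)^{2M})^{1/M}=\argmax(\kbf)^{2}$, so the true exponent is $-\al/M+2$ rather than $-\al/M+2M+2$, though since $2\le 4M$ for $M\ge1$ the stated bound follows either way.
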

\begin{proof}
We fix a vector $\kbf$ with $\norm{\kbf}\le N$. We may assume $\kbf\ne\mathbf{0}$ because the statement is trivial for $\kbf=\mathbf{0}$ since $\ga\le 1$.

(a) Lemma~\ref{lemma-det} shows that there exists a constant $C=C(N,\si_0)$ such that for any $0\le \si\le \si_0$ there exists $1\le k\le M$ with 
\begin{equation}\label{eq-proofbambusiaux}
\absbigg{ \frac{\dd^k (\kbf\cdot\Ombf) (\si)}{\dd\si^k} } \ge C \argmax(\kbf)^{-2M}.
\end{equation}

(b) The function $g:[0,\si_0]\rightarrow\R,\, \si\mapsto \kbf\cdot\Om(\si) + m$ is infinitely differentiable and its first $M+1$ derivatives are uniformly bounded on $[0,\si_0]$ by a constant depending only on $\si_0$ and $N$ (Lemma~\ref{lemma-det}). The property \eqref{eq-proofbambusiaux} then enables us to apply \cite[Lemma 8.4]{Bambusi1999}, which yields the statement of the lemma.
\end{proof}

Setting 
\begin{equation}\label{eq-Qc2}
\Qc(\ga,\al) = \bigcup_{\substack{\kbf : \norm{\kbf}\le N \\ \lbf : \norm{\lbf}\le 2\\ \kbf+\lbf\ne \mathbf{0}}} \Qc_{\kbf,\lbf,0}(\ga,\al)
\end{equation}
with $\Qc_{\kbf,\lbf,0}(\ga,\al)$ from~\eqref{eq-Qc}
we can now prove the following non-resonance result in the spirit of Bambusi's non-resonance condition, see \cite[Lemma~5.7]{Bambusi2006}.

\begin{proposition}\label{prop-bambusi}
Let $\al\ge (5N)^4$. Then there exists a constant $C=C(N,\al,\si_0)$ such that for all $0<\ga\le 1$
\[
\abs{\Qc(\ga,\al)} \le C \ga^{1/(2\sqrt{\al}(N+2))}.
\]
\end{proposition}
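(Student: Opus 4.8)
The plan is to reduce $\Qc(\ga,\al)$, whose definition involves linear combinations $(\kbf+\lbf)\cdot\Ombf$ with $\norm{\kbf}\le N$ and $\norm{\lbf}\le 2$, to a union of sets of the type $\Qc_{\kbf',\mathbf{0},m}(\ga',\al')$ that are already estimated in Lemma~\ref{lemma-Pc} — in which only a single vector $\kbf'$ of $\ell^1$-norm at most $N+2$ and an integer shift $m$ occur — and then to sum the resulting measure bounds.

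First I would eliminate the vector $\lbf$. Using Lemma~\ref{lemma-asymptotic} I write $\Om_n(\si)=n+\la\si-\et_n(\si)$ with $0\le\et_n(\si)\le C/n$ and $\abs{\et_n'(\si)}\le C/n$, and I split the support of $\lbf$ at $\argmax(\kbf)$. The indices of $\lbf$ not exceeding $\argmax(\kbf)$ are simply added to $\kbf$, producing $\hat\kbf$ with $\norm{\hat\kbf}\le N+2$ and $\argmax(\hat\kbf)\le\argmax(\kbf)$. For the part of $\lbf$ living on larger indices one distinguishes the sign pattern: if $\lbf\cdot\Ombf$ contains an uncancelled single frequency $\pm\Om_{n''}$, or a sum $\pm(\Om_{n''}+\Om_{m''})$, with $n''$ much larger than $\argmax(\kbf)$, then on all of $[0,\si_0]$ we have $\abs{\hat\kbf\cdot\Ombf}\le CN\argmax(\kbf)$ while the high part has size $\gtrsim n''$, so $\Qc_{\kbf,\lbf,0}(\ga,\al)$ is empty; if the high indices of $\lbf$ are only comparable to $\argmax(\kbf)$, they are absorbed into $\hat\kbf$ as well, at the cost of replacing $\ga$ by a fixed multiple of itself; in the remaining case $\lbf\cdot\Ombf$ reduces to a difference $\Om_{n''}-\Om_{m''}$, which by the asymptotics equals the integer $n''-m''$ up to an error of size $O(1/\argmax(\kbf))$, which is either dominated by the tolerance $\ga/\argmax(\kbf)^\al$ or, when it is larger, absorbed by passing to weaker parameters. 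The outcome of this bookkeeping is an inclusion of $\Qc(\ga,\al)$ in a union of sets $\Qc_{\kbf',\mathbf{0},m}(\ga',\al')$ with $\norm{\kbf'}\le N+2$, where $\ga'$ is a controlled power of $\ga$ and $\al'$ is roughly of order $\sqrt\al$, and where for each $\kbf'$ the number of relevant shifts $m$ is bounded in terms of $N$ alone, since $\kbf'\cdot\Ombf$ has $\si$-derivative of size $O(N)$ by Lemma~\ref{lemma-det} and hence varies by only a bounded amount over $[0,\si_0]$.

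Next I would apply Lemma~\ref{lemma-Pc} — in the version with $N$ replaced by $N+2$, whose proof carries over unchanged because Lemma~\ref{lemma-det} and the estimate \cite[Lemma~8.4]{Bambusi1999} used there depend only on the number $M$ of nonzero entries — to bound $\abs{\Qc_{\kbf',\mathbf{0},m}(\ga',\al')}$ by $C(\ga')^{1/M}\argmax(\kbf')^{-\al'/M+4M}$. Then I would sum over the contributing data: for a given value $n'$ of $\argmax(\kbf')$ there are at most $C(n')^{a}$ admissible vectors $\kbf'$ (with $a=a(d,N)$) and at most $C_N$ relevant shifts $m$ for each, while the measure bound carries the factor $(n')^{-\al'/M+4M}$; thanks to the hypothesis $\al\ge(5N)^4$ the net exponent of $n'$ is negative by a wide margin, so the sum over $n'$ converges to a finite constant and one obtains a bound $C\ga^{\theta}$ with an explicit $\theta>0$. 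Balancing the contributions of small and large $\argmax(\kbf)$ against the threshold used to switch between ``absorb'' and ``treat as integer'', and optimizing that threshold as a power of $\ga$, yields the stated bound $\abs{\Qc(\ga,\al)}\le C\ga^{1/(2\sqrt\al(N+2))}$.

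The main obstacle is the $\lbf$-reduction. Its delicate aspect is that the error $O(1/\argmax(\kbf))$ from replacing a frequency difference by an integer need not be small compared with the tolerance $\ga/\argmax(\kbf)^\al$, and that the large indices involved are a priori unbounded; keeping the accumulation over all such $\lbf$ under control requires exploiting that for a fixed integer shift the sets attached to different large indices essentially coincide, and that the ``growing'' sign patterns are excluded outright. This is exactly where the parameters $\ga$ and $\al$ have to be weakened by square-root-type amounts, which is why the exponent in the conclusion is $1/(2\sqrt\al(N+2))$ rather than the $1/(N+2)$ one might naively expect from Lemma~\ref{lemma-Pc} alone. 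Everything downstream — the measure estimate for a single $\Qc_{\kbf',\mathbf{0},m}$ and the convergence of the sum over $\argmax(\kbf')$ — is routine given Lemma~\ref{lemma-Pc} and the generous margin $\al\ge(5N)^4$.
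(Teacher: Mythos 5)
Your proposal follows essentially the same route as the paper: eliminate $\lbf$ by a case analysis on the sign pattern (absorb indices of comparable size into $\kbf$, discard "growing" patterns as giving empty sets, convert frequency differences $\Om_{n''}-\Om_{m''}$ into an integer shift $m$ at the cost of weakening $\ga$ and $\al$ to roughly their square roots), then apply Lemma~\ref{lemma-Pc} with $M\le N+2$ and sum over $\argmax(\kbf')$ using the large margin $\al\ge(5N)^4$. The only cosmetic difference is your bookkeeping of the shifts $m$ — you bound their number per $\kbf'$ by a constant depending on $N$ and $\si_0$ using the $O(N)$ bound on $\dd(\kbf'\cdot\Ombf)/\dd\si$ from Lemma~\ref{lemma-det}, whereas the paper uses the slightly weaker bound $\abs{m}<c'''\argmax(\kbf')$ from Lemma~\ref{lemma-asymptotic} — but both counts are subsumed by the generous margin and the convergence of the $L$-sum goes through either way.
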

\begin{proof}
We consider the sets $\Qc_{\kbf,\lbf,0}(\ga,\al)$ for vectors $\lbf$ with $\norm{\lbf}\le 2$. Throughout this discussion we fix $0<\ga\le 1$ and $\kbf$ with $\norm{\kbf}\le N$. 

(a) For the vector $\lbf=\mathbf{0}$ the measure of the set $\Qc_{\kbf,\lbf,0}(\ga,\al)$ can be estimated with Lemma~\ref{lemma-Pc}.

(b) For $\lbf = \pm\skla{n}$ let $c'\ge 1$ be a constant such that $\abs{\pm\Om_n+\kbf\cdot\Ombf} \ge 1$ if $n>c'\argmax(\kbf)$. This constant exists by Lemma~\ref{lemma-asymptotic} and depends on $\si_0$ and $N$. Then for $n\le c'\argmax(\kbf)$ 
\[
\Qc_{\kbf,\lbf,0}(\ga,\al)\subseteq\Qc_{\kbf+\lbf,\mathbf{0},0} \bigl((c')^\al\ga,\al\bigr),
\]
whereas $\Qc_{\kbf,\lbf,0}(\ga,\al)=\emptyset$ for $n>c'\argmax(\kbf)$.

(c) For $\lbf = \pm(\skla{n}+\skla{n'})$ let similarly be $c''=c''(N,\si_0)\ge 1$ be a constant such that $\abs{\pm(\Om_n+\Om_{n'}) +\kbf\cdot\Ombf} \ge 1$ if $n+n'>c''\argmax(\kbf)$. Then for $n+n'\le c''\argmax(\kbf)$ 
\[
\Qc_{\kbf,\lbf,0}(\ga,\al)\subseteq\Qc_{\kbf+\lbf,\mathbf{0},0} \bigl((c'')^\al\ga,\al\bigr),
\]
whereas $\Qc_{\kbf,\lbf,0}(\ga,\al)=\emptyset$ for $n+n'>c''\argmax(\kbf)$.

(d) For $\lbf = \pm(\skla{n}-\skla{n'})$, where without loss of generality $n< n'$, note that with the constant $C$ of Lemma~\ref{lemma-asymptotic}
\[
\abs{\lbf\cdot\Ombf - m} \le \frac{2C}{n} \myfor m=\pm(n\pm n').
\]
Then for $n\ge C\argmax(\kbf)^{\sqrt{\al}}/\ga^{1/(2\sqrt{\al})}$
\[
\Qc_{\kbf,\lbf,0}(\ga,\al)\subseteq \Qc_{\kbf,\mathbf{0},m} \bigl(3\ga^{1/(2\sqrt{\al})},\sqrt{\al}\bigr),
\]
and this set is empty for $\abs{m}\ge c''' \argmax(\kbf)$ with a constant $c'''=c'''(N,\si_0)$ by Lemma~\ref{lemma-asymptotic}. On the other hand, we have for $n< C\argmax(\kbf)^{\sqrt{\al}}/\ga^{1/(2\sqrt{\al})}$
\[
\Qc_{\kbf,\lbf,0}(\ga,\al)\subseteq \Qc_{\kbf\pm\skla{n},\mp\skla{n'},0}(C^{\sqrt{\al}}\sqrt{\ga},\sqrt{\al}),
\]
a situation that is covered by (b).

The results (a)-(d) show that there exists a constant $c=c(N,\al,\si_0)$ such that
\[
\Qc(\ga,\al) \subseteq \bigcup_{\substack{\kbf : \norm{\kbf}\le N+2 \\ m\in\Z : \abs{m}< c''' \argmax(\kbf)\\ \norm{\kbf}+\abs{m}\ne 0}} \Qc_{\kbf,\mathbf{0},m} \bigl(c \ga^{1/(2\sqrt{\al})},\sqrt{\al}\bigr).
\]
Since the number of vectors $\kbf$ with $\norm{\kbf}\le N+2$ and $\argmax(\kbf)=L$ is at most $(N+3)L^{N+2}$ we have by Lemma~\ref{lemma-Pc} 
\[
\abs{\Qc(\ga,\al)} \le C \ga^{1/(2\sqrt{\al}(N+2))} \sum_{L= 1}^\infty L^{5N+11-\sqrt{\al}/(N+2)}
\]
with a constant $C = C(N,\al,\si_0)$. The choice of $\al$ ensures that $\sqrt{\al}\ge (N+2)(5N+13)$, and hence the latter sum converges and the proposition is proven.
\end{proof}

\begin{remark}[case $\ell\ne 0$]\label{rem-lne0}
For $\ell\ne 0$ but small, the frequencies $\om_j$ from~\eqref{eq-freq} are different from those for $\ell=0$ only for large $j$. For these large $j$, we have to deal with two differences. 

First, the frequencies of \eqref{eq-freq} (and also the modified frequencies) contain an additional summand $\frac12 \abs{\ell+j\bmod{2K}}^2 - \frac12 \abs{\ell-j\bmod{2K}}^2$. This is an integer and does not affect the proof of Lemmas~\ref{lemma-det} and \ref{lemma-Pc}, where also the integer summand $\abs{j}^2 - \mu_{\abs{j}^2}$ in the modified frequencies~\eqref{eq-freqmodomt} does not pose a problem. It does neither pose a problem in the proof of Proposition~\ref{prop-bambusi} since it is of order one for small $\ell$ (for part (b) and (c) of the proof) and is an integer (for part (d) of the proof). 

Second, the quantity $n(j)$ appearing in the frequencies of \eqref{eq-freq} can be different from $\abs{j}^2$. But for small $\ell$, these two quantities are of the same order, see Lemma~\ref{lemma-nj}. We therefore expect the statements of Lemmas~\ref{lemma-det} and \ref{lemma-Pc} and of Proposition~\ref{prop-bambusi} to transfer to this situation with constants depending on $\ell$. 
\end{remark}

\subsection{Proof of Theorem~\ref{thm-main2}}

We have already verified in Lemma~\ref{lemma-stab-numericalvsanalytical} that Assumption~\ref{assum-linearstability} is satisfied under the conditions~\eqref{eq-linearstabilityexact} and~\eqref{eq-strongcfl} of Theorem~\ref{thm-main2}. We have also verified in \eqref{eq-freqmodclosefreq} that the modified frequencies~\eqref{eq-freqmodomt} are close to the original frequencies as required in part (a) of Assumption~\ref{assum-nonres}. We will now prove that they satisfy the non-resonance condition in part (b) of Assumption~\ref{assum-nonres} for many values of $h$ and $\rh$. Note that, in the considered case $\ell=0$, part (c) of Assumption~\ref{assum-nonres} follows from part (b) since $\omt_j=\omt_l$ for all $j,l\in\ind$ with $n(j)=\abs{j}^2=\abs{l}^2 = n(l)$. 

Fix $\rh_0>0$ with~\eqref{eq-linearstabilityexact}, $h_0>0$ and $N$. In contrast to the previous subsection, we do not fix the time step-size $h$ anymore. We consider for all $0<h\le h_0$ the corresponding sets~\eqref{eq-Qc2} for $\si_0=\rh_0^2$ which we denote now by $\Qc_h(\ga,\al)$ to emphasize the dependence (of the modified frequencies, and hence the sets) on $h$. We set for $0<\ga\le 1$
\[
\Pc(\ga) = \bigl\{\, (h,\rh)\in[0,h_0] \times [0,\rh_0] : \rh^2 \not\in\Qc_h(\ga,\al) \,\bigr\}
\]
with $\al=(5N)^4$. As mentioned above, all $(h,\rh)\in\Pc(\ga)$ satisfy Assumption~\ref{assum-linearstability} with constant $c_1=c_1(\rh_0)$ and part (a) of Assumption~\ref{assum-nonres} with $\widehat{\ep}=C_2h^2$ and constant $C_2=C_2(\rh_0)$ provided that $K$ satisfies~\eqref{eq-strongcfl}.

We still have to show that for all $(h,\rh)\in\Pc(\ga)$ the modified frequencies $\Om_n = \Om_n(\rh^2)$ satisfy the non-resonance condition in part (b) of Assumption~\ref{assum-nonres} provided that~\eqref{eq-strongcfl} holds. For this purpose let $\kbf\in\Z^\Nc$ with $\norm{\kbf}\le N+1$. Then we have
\begin{equation}\label{eq-proofthmmain2-aux}
\frac{2}{\pi} \, \abs{\kbf\cdot\Ombf} \le \absbigg{\frac{\e^{\iu(\kbf\cdot\Ombf)h}-1}{h}} =:\de
\end{equation}
since the (strong\footnote{This is the first and only place, where we need that the right-hand side of \eqref{eq-strongcfl} is $\pi/(N+1)$ and not only $\pi/3$, say.}) step-size restriction \eqref{eq-strongcfl} ensures together with Lemma~\ref{lemma-asymptotic} that $\abs{\kbf\cdot\Ombf}h\le \pi$. Now we write 
\[
\kbf\cdot\Ombf = \pm\Om_{n_M} \pm\Om_{n_{M-1}}\pm\dots\pm\Om_{n_1}
\]
with $n_M\ge n_{M-1}\ge\dots\ge n_1$ in such a way that there is no pairwise cancellation ($M=\norm{\kbf}$). For $\de\le 1$ we have by~\eqref{eq-proofthmmain2-aux} and by Lemma~\ref{lemma-asymptotic} that $n_M\le c n_{M-1}$ with $c=c(N\rh_0)$. Moreover, the choice of the set $\Qc_h(\ga,\al)$ yields 
\[
\abs{\kbf\cdot\Ombf} \ge \ga \biggl(\frac{n_M^2}{c\, n_M n_{M-1}\dotsm n_1}\biggr)^\al .
\]
Combining this with~\eqref{eq-proofthmmain2-aux} we get
\[
\biggl(\frac{n_M^2}{\prod_{n\in\Nc} n^{\abs{k_n}}}\biggr)^\al \le \frac{c^\al \pi}{2\ga}\, \de.
\]
The non-resonance condition of Assumption~\ref{assum-nonres} thus holds for $c_2 = c_2(N,\ga,\rh_0)$, $\de_2=1$ and $s_2=\al N$. 

We finally have to estimate the Lebesgue measure of $\Pc(\ga)$. By Fubini's theorem and Proposition~\ref{prop-bambusi} we have
\[
\abs{\Pc(\ga)} = \rh_0h_0 - \int_0^{h_0} \Qc_h(\ga,\al)\, dh \ge \rh_0h_0 - C h_0 \ga^{1/(2\sqrt{\al}(N+2))}
\]
because the constant in this proposition is independent of $h$. The proof of Theorem~\ref{thm-main2} is thus complete if we redefine $\ga$.


\end{document}